\newtheorem{theorem}{Theorem}[section]
\newtheorem{lemma}[theorem]{Lemma}
\newtheorem{proposition}[theorem]{Proposition}
\newtheorem{corollary}{Corollary}
\theoremstyle{definition}
\newtheorem{definition}{Definition}
\newtheorem{remark}{Remark}
\newtheorem{problem}{Problem}
\newcommand{\scp}[1]{\langle#1\rangle}
\newcommand{\uz}{\underline{\zeta}}
\newcommand{\ux}{\underline{\xi}}
\newcommand{\Rm}{\mathbb{R}^m}
\newcommand{\ut}{\underline{\theta}}
\newcommand{\uy}{\underline{y}}
\begin{document}

\title[Exact uniform approximation and Dirichlet spectrum]{Exact uniform approximation and Dirichlet spectrum in dimension at least two}

\author{Johannes Schleischitz}


\thanks{Middle East Technical University, Northern Cyprus Campus, Kalkanli, G\"uzelyurt \\
    johannes@metu.edu.tr ; jschleischitz@outlook.com}


\begin{abstract}
   For $m\geq 2$, we determine the Dirichlet spectrum in $\Rm$ with respect to simultaneous approximation and the maximum norm 
   as the entire interval $[0,1]$. This complements previous
   work of several authors, especially Akhunzhanov and Moshchevitin, who considered $m=2$ and Euclidean norm.
   We construct explicit examples of real Liouville 
   vectors realizing any value in the unit interval. In particular, for positive values, they are neither badly approximable nor singular. Thereby we obtain a constructive proof of the main claim in a recent paper by Beresnevich, Guan, Marnat, Ram\'irez and Velani, who obtained a countable partition of $[0,1]$ into intervals with each having non-empty intersection with the Dirichlet spectrum. 
   Our construction is flexible enough to show that the according set of vectors with prescribed Dirichlet constant
   has large packing dimension and rather large 
   Hausdorff dimension as well.
   We establish a more general result on exact uniform approximation, applicable to a wide class of approximating functions.
   Our constructive proofs are considerably shorter and less involved than previous work on the topic. By minor twists of our proof, we infer similar, slightly weaker results when restricting to a certain class of classical fractal sets or other norms. In an Appendix we address the situation of a linear form.
\end{abstract}

\maketitle

{\footnotesize{

{\em Keywords}: Dirichlet spectrum, Cantor set\\
Math Subject Classification 2020: 11J06, 11J13}}



\section{Dirichlet spectrum}  \label{s1.1}

Let $\Vert x\Vert$ be the distance of $x\in \mathbb{R}$ to the nearest integer and for $\underline{x}\in \Rm$
let $\Vert \underline{x}\Vert= \max\{\Vert x_1\Vert, \ldots, \Vert x_m\Vert\}$. Given $\ux\in\Rm$, define
the non-increasing, piecewise constant, right-continuous function
\[
\psi_{\ux}(Q)= \min_{1\leq q\leq Q} \Vert q\ux\Vert, \qquad\qquad Q\geq 1,
\]
where $q$ ranges over the positive integers up to $Q$.
Let us then call
\begin{equation}  \label{eq:th}
\Theta(\ux):= \limsup_{Q\to\infty}\; Q^{1/m}\psi_{\ux}(Q),
\end{equation}
the Dirichlet constant of $\ux$, which is thereby
considered with respect to 
simultaneous approximation and the 
maximum norm. Define the Dirichlet spectrum $\mathbb{D}_m$ as the 
set of all values that the Dirichlet constant takes, i.e.
\[
\mathbb{D}_m=\{ \Theta(\ux): \ux\in \Rm \}.
\]
(Note: Occasionally, as in~\cite{am2},~\cite{as}, the $m$-th power of $\Theta(\ux)$ 
is taken, leading to an accordingly altered Dirichlet spectrum
$\mathbb{D}_m^m$). For the accordingly defined Lagrange spectrum when
considering the lower limit in \eqref{eq:th} instead, see~\cite{akm} for a very general result.
The set $\mathbb{D}_m$ is contained in the interval
$[0,1]$ by Dirichlet's Theorem. 
It was proved in~\cite{ds} that $\Theta(\ux)=1$ for Lebesgue almost 
all $\ux\in\Rm$,
see also the very recent paper by Kleinbock, Str\"ombergsson, Yu~\cite{ksw} 
for a considerably refined result and further references. 
For $m=1$, the Dirichlet spectrum is a rather complicated, well-studied object,
see~\cite{am2} for a wealth of references.
In particular, it is known
that $\mathbb{D}_1$ is not an interval, and contained in $\{0\}\cup[1/2,1]$ by a result
of Khintchine~\cite{khint}.  See further for example~\cite{feng},~\cite{huang},~\cite{huss},~\cite{kw} for refined
metrical claims when restricting to $\ux$ with $\Theta(\ux)=1$
when $m=1$. It is worth mentioning that
for $m=1$, Davenport and Schmidt~\cite{ds2} showed that, besides rational numbers, precisely numbers with bounded partial
quotients induce $\Theta(\xi)<1$. These coincide with
the set of badly approximable numbers for which 
$\liminf_{Q\to\infty} Q\psi_{\ux}(Q)>0$.
The claim is no longer
true for any $m>1$ and accordingly defined set of badly approximable vectors in $\Rm$ inducing $\liminf_{Q\to\infty} Q^{1/m}\psi_{\ux}(Q)>0$.
However, any badly approximable
vector is Dirichlet improvable in any dimension,
again a result due to Davenport and Schmidt~\cite[Theorem~2]{ds}.

For $m\geq 2$, the set of vectors that satisfy $\Theta(\ux)=0$, commonly referred to as
singular vectors, has Hausdorff dimension $m^2/(m+1)$, see~\cite{cheche}. Moreover, it is easy to see that
the $(m-1)$-dimensional set of vectors that are $\mathbb{Q}$-linearly dependent
together with $\{ 1\}$ shares this property.
Hence $\{0,1\}\subseteq \mathbb{D}_m$ for any $m\geq 2$. 
For $m=2$ and with respect to the Euclidean norm, results on the Dirichlet spectrum
were obtained by Akhunzhanov and Shatskov~\cite{as} and Akhunzhanov and Moshchevitin~\cite{am2}. In~\cite{as} it is shown
that this Dirichlet spectrum is an interval
which in some natural sense is
as large as it can be. For arbitrary 
norms, very recently structural results for $\mathbb{D}_2$ were obtained by Kleinbock and Rao~\cite{kr}, see also~\cite{kr2}. For $m\geq 2$ and
in the dual setting of a linear form in $m$ variables, some results
on the Dirichlet spectrum are immediate
from~\cite{beretc, marnat}, see Theorem~\ref{bt} in the Appendix. 
Also from~\cite{cheche} some metrical information can be inferred. None of these results 
however implies the existence of 
any non-empty interval where the Dirichlet spectrum with respect to
some norm is dense, when $m\geq 3$. In Corollary~\ref{rocky} we 
provide an interval contained in the Dirichlet spectrum, for a wide
class of norms on $\Rm$.

\section{Determination of Dirichlet spectrum in $\Rm$}  \label{se2.2}
We show that if $m\geq 2$, there exist (Liouville) vectors with any prescribed Dirichlet constant in $[0,1]$. In fact this set is rather
large in some metrical sense. 

\begin{theorem}  \label{A}
	Let $m\geq 2$. For any $c\in[0,1]$, there exists a
	set $\mathscr{A}_{m,c}\subseteq \Rm$ of packing dimension $m-1$ 
	consisting of $\ux\in\Rm$ satisfying
	\begin{equation} \label{eq:ii}
	\Theta(\ux) = c
	\end{equation}
	and for every $N$ we have
		\begin{equation} \label{eq:i}
\liminf_{Q\to\infty} Q^{N}\psi_{\ux}(Q) =0.
\end{equation}
In particular $\mathbb{D}_m=[0,1]$.
\end{theorem}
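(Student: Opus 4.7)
The plan begins with the degenerate case $c = 0$: take $\ux = (\uy, 0)$ with $\uy \in \R^{m-1}$ an arbitrary Liouville vector. Dirichlet in dimension $m-1$ then gives $\psi_{\ux}(Q) = \psi_{\uy}(Q) \leq Q^{-1/(m-1)}$, so $Q^{1/m}\psi_{\ux}(Q) \to 0$ and $\Theta(\ux) = 0$; since Liouville vectors in $\R^{m-1}$ form a set of packing dimension $m-1$ (in fact full Hausdorff dimension), the theorem holds at $c = 0$.

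For $c \in (0, 1]$ the plan is an explicit Cantor-type construction. Fix a very rapidly increasing sequence $q_1 < q_2 < \cdots$ with $\log q_{n+1}/\log q_n \to \infty$, set $r_n := c\, q_{n+1}^{-1/m}$, and inductively select integer vectors $\underline{p}_n \in \mathbb{Z}^m$ so that the $\ell^\infty$ balls $B_n := \{\ux : \|q_n \ux - \underline{p}_n\|_\infty \leq r_n\}$, of side $2 r_n/q_n$ about $\underline{p}_n/q_n$, are consistently nested. Define $\mathscr{A}_{m,c}$ as the set of $\ux$ satisfying $\|q_n \ux - \underline{p}_n\|_\infty = r_n$ for every $n$, as the $\underline{p}_n$ range over admissible chains. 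The sphere condition immediately gives $\psi_{\ux}(q_n) \leq r_n$, hence $Q^{1/m} \psi_{\ux}(Q) \leq c$ on each interval $[q_n, q_{n+1}]$, so $\Theta(\ux) \leq c$. The Liouville relation \eqref{eq:i} follows since $r_n < q_n^{-N}$ eventually for any fixed $N$, by the growth of $q_n$.

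The main obstacle is the reverse inequality $\Theta(\ux) \geq c$, which requires $\psi_{\ux}(q_{n+1}-1) = r_n$: no parasitic denominator $q \in (q_n, q_{n+1}) \setminus \{q_k\}$ may produce $\|q\ux\|_\infty < r_n$. I would handle this by exploiting the $(m-1)$-dimensional freedom on each $\ell^\infty$ face. Writing $\ux = \underline{p}_n/q_n + \underline{\delta}$ with $\|\underline{\delta}\|_\infty = r_n/q_n$ and expanding $\|q\ux\|_\infty$ in terms of the residues $q\underline{p}_n \bmod q_n$ together with $q\underline{\delta}$, parasitic $q$ translates into a specific congruence condition on $\underline{p}_n$ and a directional constraint on $\underline{\delta}$; a union bound over $q \in (q_n, q_{n+1})$ combined with the $(m-1)$-dimensional choice of $\underline{\delta}$ on each face shows that valid pairs $(\underline{p}_n, \underline{\delta})$ avoiding every parasitic exist within any allowable neighborhood, provided $q_{n+1}$ is taken large enough relative to $q_n$. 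This is where the hypothesis $m \geq 2$ enters crucially. Finally, the packing dimension of $\mathscr{A}_{m,c}$ equals $m-1$: the upper bound is automatic since $\ux$ lies on a countable union of $(m-1)$-dim $\ell^\infty$ spheres, while the lower bound comes from a branching-factor count using the $\sim (q_{n+1} r_n/q_n)^{m-1}$ admissible sub-faces inside each $B_n$ and evaluating log-covers at intermediate scales between $r_{n+1}/q_{n+1}$ and $r_n/q_n$, where the level-$n$ structure is resolved but the level-$(n+1)$ refinement has not yet taken effect; the resulting ratio tends to $m-1$.
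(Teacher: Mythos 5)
Your approach is genuinely different from the paper's. The paper constructs Liouville-type vectors $\xi_i=\sum_{n\geq 0}a_{mn+i}^{-1}$ with a cycling index $i\bmod m$, a rapidly growing sequence $a_j$ enjoying divisibility $a_j\mid a_{j+1}$, and proves the lower bound on $\psi_{\ux}$ by an explicit case analysis (based on the largest $s$ with $a_s\mid q$) using coprimality and divisibility; the packing-dimension estimate comes from perturbing binary digits and applying Tricot's product bound. You instead propose a geometric Cantor construction of nested $\ell^{\infty}$-spheres $\{\|q_n\ux-\underline{p}_n\|_\infty=r_n\}$ and try to rule out parasitic denominators by a union bound together with the $(m-1)$-dimensional freedom on a face. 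The $c=0$ observation (embed a Liouville $\uy\in\mathbb{R}^{m-1}$ into the hyperplane $x_m=0$) is fine.

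There are, however, two genuine gaps for $c\in(0,1]$. First, as written the construction is ill-posed: if $B_{n+1}\subset B_n$ and $\ux$ lies on the boundary sphere $\partial B_n$ and on $\partial B_{n+1}$, then either a face of $B_{n+1}$ must lie on the same hyperplane as a face of $B_n$ (a rigid Diophantine coincidence that will not generally hold), or the two boundary spheres meet only in a codimension-$2$ set; iterating this over all $n$ either empties or collapses the intersection well below dimension $m-1$. You would need to relax the exact equality $\|q_n\ux-\underline{p}_n\|_\infty=r_n$ to a thin annulus $r_n(1-\epsilon_n)\leq\|q_n\ux-\underline{p}_n\|_\infty\leq r_n$, and then the packing-dimension and $\Theta(\ux)\geq c$ arguments change accordingly.

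Second, and more seriously, the ``union bound over $q\in(q_n,q_{n+1})$'' that should rule out parasitic denominators is asserted but not carried out, and I do not believe it works in the form stated. Already for multiples $q=Lq_n$ with $1\leq L< q_{n+1}/q_n$ one has $\|q\ux\|=\|L\underline{\nu}\|$ with $\underline{\nu}:=q_n\underline{\delta}$, $\|\underline{\nu}\|_\infty=r_n=cq_{n+1}^{-1/m}$, and since $q_{n+1}/q_n$ may be arbitrarily large relative to $q_{n+1}^{1/m}$, there are $\gg q_{n+1}^{1-1/m}/q_n$ such $L$ for which $L\underline{\nu}$ wraps around $\bmod\,1$; controlling $\|L\underline{\nu}\|\geq r_n$ for all of them is a strong Diophantine (badly approximable in a window) requirement, not something a measure union bound gives — indeed, on the full $m$-dimensional ball the parasitic-bad measure exceeds the ball measure, consistent with the fact that $\{\Theta(\ux)<1\}$ is Lebesgue-null. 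On the $(m-1)$-dimensional face the count is also unclear, and you give no mechanism comparable to what the paper uses (Proposition~\ref{uv} and the cases $a_s\mid q$ in the proof of (C2)). Until the parasitic avoidance is made precise, the lower bound $\Theta(\ux)\geq c$, and with it the whole $c>0$ case, is unproved.
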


The claim is very much in line with the result for $m=2$ and the Euclidean norm quoted in~\S~\ref{s1.1}. Note that on the other hand that property \eqref{eq:i} forces the Hausdorff
dimension of $\mathscr{A}_{m,c}$ to be $0$ by 
Jarn\'ik-Besicovich Theorem~\cite{jarnik}.
See however Theorems~\ref{hdd},~\ref{beides} below for non-trivial
Hausdorff dimension results
when we drop hypothesis \eqref{eq:i}.
Theorem~\ref{A} is an immediate corollary
of the more general Theorem~\ref{F} below for a much larger class of
uniform approximating functions.

\begin{definition}
For $m\geq 2$ a fixed integer and $\Phi: \mathbb{N}\to (0,1)$ any
function, we define decay properties
$(d1), (d2), (d3)$ and for $\gamma>0$ the property $(d4(\gamma))$ as follows.

\begin{itemize}
	\item[(d1)] Assume
	\begin{equation*}
	\Phi(t)<t^{-1/m}, \qquad t\geq t_0.
	\end{equation*}
	\item[(d2)] Assume
	\[
	\lim_{t\to\infty} t^{\frac{1}{m-1} } \Phi(t)= \infty.
	\]
	\item[(d3)] Assume
	\[
	 \liminf_{\alpha\to 1^{+}} \; \liminf_{t\to\infty} \frac{\Phi(\alpha t)}{\Phi(t)}\geq 1, \qquad \text{if}\; m\geq 3,
	\]
	and
		\[
	\liminf_{\alpha\to 1} \; \liminf_{t\to\infty} \frac{\Phi(\alpha t)}{\Phi(t)}\geq 1, \qquad \text{if}\; m=2,
	\]
	where $t$ and $\alpha t$ are considered integers so that the
	expression is well-defined. 
		\item[$(d4(\gamma))$] Assume for given $\gamma>0$ and some 
		$\eta>0$, we have
			\[
		\Phi(t)> \eta t^{-\gamma}, \qquad t\geq t_0.
		\]
\end{itemize}

\end{definition}

An alternative formulation of $(d3)$ is that for every $\epsilon_0>0$
there is $\epsilon_1>0$, $t_0>0$ such that for any $\alpha\in(1,1+\epsilon_1)$ (resp. $\alpha\in(1-\epsilon_1,1+\epsilon_1)$ when $m=2$)
and $t\geq t_0$ we have 
\[
\frac{\Phi(\alpha t)}{\Phi(t)}\geq 1-\epsilon_0.
\]
Our more general result reads as follows.

\begin{theorem}  \label{F}
	Let $m\geq 2$ an integer and $\Phi$ satisfy $(d1), (d2), (d3)$. Then there exist uncountably many $\ux\in\Rm$ 
	for which the claims $(C1), (C2), (C3)$ below hold: 
	\begin{itemize}
		\item[(C1)] We have
	\[
	\psi_{\ux}(Q)< \Phi(Q), \qquad Q\geq Q_0.
	\]
	\item[(C2)]  For any $\varepsilon>0$, we have
	\begin{equation*}  
	\psi_{\ux}(Q) > (1-\varepsilon) \Phi(Q)
	\end{equation*}
	for certain arbitrarily large $Q$.
	\item[(C3)] Property \eqref{eq:i} holds for any given $N$.
\end{itemize}
	If for some $\gamma>0$ the function $\Phi$ satisfies $(d4(\gamma))$,
	then the packing dimension of the set of $\ux$ as above is at 
	least $m(1-\gamma)$.
\end{theorem}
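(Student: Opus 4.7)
The plan is to build $\ux$ as a series
\[
\ux \;=\; \sum_{k\geq 1}\frac{\underline{a_k}}{q_k}, \qquad \underline{a_k}\in\mathbb{Z}^m,
\]
with a rapidly growing divisibility chain $q_1\mid q_2\mid \cdots$ of positive integers. The divisibility forces $q_n\sum_{k\leq n}\underline{a_k}/q_k\in\mathbb{Z}^m$, so $\|q_n\ux\|$ is determined by the tail; provided $q_{k+1}/q_k$ grows fast enough, this tail is dominated by its leading term and yields
\[
\|q_n\ux\|\;\approx\;\max_{1\leq i\leq m}\|a_{n+1,i}/m_{n+1}\|, \qquad m_{n+1}:=q_{n+1}/q_n.
\]

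At step $n{+}1$ I would pick $q_{n+1}$ enormously larger than $q_n$, say $q_{n+1}\geq q_n^{nm}$, which via $(d1)$ secures $(C3)$: one gets $\psi_{\ux}(q_n)\leq\|q_n\ux\|<(q_{n+1})^{-1/m}\leq q_n^{-n}$, so $q_n^N\psi_{\ux}(q_n)\to 0$ for every $N$. Next, let $Q_n^{\ast}$ be a minimiser of $\Phi$ on the finite set $[q_n,q_{n+1})\cap\mathbb{N}$ and choose $\underline{a_{n+1}}\in\mathbb{Z}^m$ so that
\[
(1-\varepsilon_n)\Phi(Q_n^{\ast})\;<\;\max_i\|a_{n+1,i}/m_{n+1}\|\;<\;\Phi(Q_n^{\ast})
\]
for some sequence $\varepsilon_n\downarrow 0$. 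Such a vector exists because $(d2)$ forces $m_{n+1}\Phi(Q_n^{\ast})$ to greatly exceed $1/\varepsilon_n$ once $q_{n+1}$ is large enough in terms of $q_n$ and $\varepsilon_n$. Then $(C1)$ holds on each plateau $Q\in[q_n,q_{n+1})$ because $\psi_{\ux}(Q)\leq\|q_n\ux\|<\Phi(Q_n^{\ast})\leq\Phi(Q)$ by minimality of $Q_n^{\ast}$, and $(C2)$ holds at $Q=Q_n^{\ast}$ because $\psi_{\ux}(Q_n^{\ast})=\|q_n\ux\|>(1-\varepsilon_n)\Phi(Q_n^{\ast})$, provided no intermediate $q\in(q_n,Q_n^{\ast}]$ spoils things. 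Hypothesis $(d3)$ enters in passing to rule out pathological short-scale fluctuations of $\Phi$ that would otherwise decouple $\Phi(Q_n^{\ast})$ from the generic $\Phi(Q)$ on the plateau.

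The main obstacle is the no-intermediate-improvement claim. For $q\in(q_n,q_{n+1})$ the divisibility yields, modulo $\mathbb{Z}^m$ and up to a negligible tail,
\[
q\ux \;\equiv\; \underline{w}_q/q_{n+1},
\]
for some $\underline{w}_q\in\mathbb{Z}^m$ depending on $q$ and affinely on $\underline{a_{n+1}}$. The required bound $\|q\ux\|\geq\|q_n\ux\|$ thus becomes a congruence restriction on $\underline{a_{n+1}}$ modulo $q_{n+1}$, one for each such $q$. A volume / pigeonhole count showing that the vectors $\underline{a_{n+1}}$ violating at least one such congruence occupy only a small fraction of the admissible window then produces a good choice. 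This is exactly where $m\geq 2$ is essential: in dimension one the avoidance is impossible, which aligns with Davenport--Schmidt's result cited in Section~\ref{s1.1}.

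For the packing-dimension bound under $(d4(\gamma))$, the window at stage $n$ contains at least a constant times $(m_{n+1}\Phi(Q_n^{\ast}))^m\geq \eta^m m_{n+1}^m q_{n+1}^{-m\gamma}$ integer vectors, i.e.\ at least $q_{n+1}^{m(1-\gamma)-o(1)}$ since $m_{n+1}$ is close to $q_{n+1}$; distinct choices yield $\ux$'s pairwise separated at scale $\sim 1/q_{n+1}$. Placing the uniform branching probability measure on the resulting Cantor-like set and applying the standard mass-distribution principle for packing dimension then gives $\dim_P\geq m(1-\gamma)$ in the limit.
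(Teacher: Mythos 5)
Your construction differs structurally from the paper's, and the difference matters. You build $\ux$ with a \emph{single} shared denominator chain $q_1\mid q_2\mid\cdots$ and numerator vectors $\underline{a}_k\in\mathbb{Z}^m$, intending to \emph{choose} $\underline{a}_{n+1}$ so that no intermediate $q\in(q_n,Q_n^\ast]$ spoils (C2). The paper instead uses a \emph{staggered} denominator chain: one increasing sequence $(a_j)$ with $\xi_i=\sum_n a_{mn+i}^{-1}$, so coordinate $i$ uses only the $a_j$ with $j\equiv i\pmod m$, and the growth ratios $a_{mn+2}/a_{mn+1}=a_{mn+1},\ldots,a_{mn+m-1}/a_{mn+m-2}=a_{mn+1}$, $a_{mn+m}/a_{mn+m-1}=L_n\le a_{mn+1}$ are deliberately interleaved (see \eqref{eq:tats}). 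This staggering lets the paper assign, to each $q$, the unique coordinate $h$ (determined by the largest $a_s$ dividing $q$, via $h\equiv s+1\pmod m$) for which $\Vert q\xi_h\Vert$ is provably $\ge d_f^{-1}-o(d_f^{-1})$ — a completely deterministic case analysis (Cases~I, IIa, IIb) with no avoidance argument.

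Your approach replaces that deterministic assignment by a ``volume / pigeonhole count,'' which you yourself flag as ``the main obstacle,'' but which you do not carry out — and it is not clear it can be made to work. Two concrete difficulties. First, for $q\in(q_n,Q_n^\ast]$ that are \emph{not} multiples of $q_n$, your claim that $q\ux\equiv\underline{w}_q/q_{n+1}$ modulo $\mathbb{Z}^m$ hides the contribution $q\sum_{k\le n}\underline{a}_k/q_k$, whose fractional part has denominator dividing $q_n$; for this term to be safely bounded away from $\mathbb{Z}^m$ one needs the coordinates of the partial sum to have numerators coprime to $q_n$. That is exactly what the paper guarantees inductively via Proposition~\ref{uv}, and it is absent from your sketch. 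Second, even granting the coprimality, the union bound is delicate: the number of intermediate $q$ is of order $Q_n^\ast\approx q_{n+1}$, while the admissible window for $\underline{a}_{n+1}$ has relative measure $\approx m\varepsilon_n(2\Phi(Q_n^\ast))^m$, and a careless bound gives bad measure $\gg$ window measure. One must argue via intersection of the per-$q$ bad sets with the annular window (not raw measure) and control $\gcd(q,q_{n+1})$ as well; whether the totals come out right hinges on these details, which the sketch omits. Your attribution of the role of $m\ge 2$ to ``the avoidance being impossible in dimension one'' likewise does not match the actual mechanism: in the paper, $m\ge 2$ (and the distinction $m\ge 3$ vs.\ $m=2$, see Remark~\ref{reee} and Section~\ref{ss}) enters through the structure of the exponent progression \eqref{eq:tats}, not through a counting argument.

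Finally, your treatment of $(d3)$ (``to rule out pathological short-scale fluctuations'') is roughly correct in spirit but imprecise: in the paper, $(d3)$ is used at a specific point to pass from $\Phi(Q')$ to $\Phi(Q)$ where $Q'/Q\to 1^{\pm}$ along the construction (see the use of $\alpha_f$ around \eqref{eq:kkk}). And your packing-dimension argument (mass-distribution on a Cantor set) is a plausible alternative to the paper's Tricot sumset argument, but it too depends on a separation estimate for the branching that presupposes the unresolved avoidance step. In short: the high-level strategy (block-wise greedy construction with rapid growth, a good-scale $Q_n^\ast$ per block, a Cantor-set metric bound) resembles the paper, but the crucial step making (C2) hold uniformly over all $q\le Q_n^\ast$ is not proved and likely needs the paper's staggered construction or an argument of comparable depth.
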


We can always choose $\gamma=1/(m-1)$ 
by $(d2)$, however we require $\gamma\geq 1/m$ by $(d1)$, so the bound
always lies in the short iterval $[m-1-\frac{1}{m-1},m-1]$.
It is rather satisfactory and seems to exhaust the method,
it coincides with the estimate in~\cite[Theorem~2.1]{ichneu} where the larger sets of points singular of order
at least $\gamma$ are studied. 
If we assume $\Phi$ is decreasing, then we may relax $(d4(\gamma))$
by requiring its inequality only for certain arbitrarily large $t$.
Theorem~\ref{A} represents the special case $\Phi(t)=ct^{-1/m}$
if $c\in(0,1)$, which clearly satisfies $(d1), (d2), (d3), (d4(1/m))$, and
slightly altered functions in the special cases $c\in \{0,1\}$.
If we admit a factor $1+\varepsilon$ in the right hand side of $(C1)$
and are given an explicit rate of divergence in $(d2)$, from our proof we may give a rate for $Q$ in terms of $\varepsilon$
for which $(C1), (C2)$ hold.  We prefer to omit the details but
point out that similar results have been obtained
by Akhunzhanov~\cite{akm}
for ordinary approximation, i.e. demanding $(C1)$ only for certain arbitrarily large $Q$ but $(C2)$ for all large $Q$, and omitting $(C3)$.

We discuss the assumptions on $\Phi$.
Note that we do not require $\Phi$ to be decreasing.
Property $(d1)$ is very natural and necessary by Dirichlet's Theorem.
Conversely, condition $(d2)$ implies that $\Phi$ does not decay too fast.
It does not make sense to replace the exponent $1/(m-1)$ by 
a value larger than $1$ in view
of Khintchine's result~\cite{khint} quoted in~\S\ref{s1.1}.
Property $(d3)$ is very mild and in particular satisfied for all 
functions $\Phi(t)=ct^{-\tau}$, with $\tau>0, c>0$. It ensures
that $\Phi$ does not decay (and not rise when $m=2$) very fast in short intervals.
From $(d1), (d3)$ we see that in fact the lower limit for $\alpha$ in $(d3)$ must equal $1$. 
As observed above, $(d2)$ implies $(d4(\frac{1}{m-1}))$.
We further remark that claim $(C2)$ and
property $(d2)$ imply 
that the coordinates of $\ux$ in Theorem~\ref{F} together with $\{1\}$ are linearly independent over $\mathbb{Q}$, in other words
$\ux$ is totally irrational. We go on to comment on potential relaxations/removal of the conditions $(d2), (d3)$ in \S~\ref{nee}. 

Denote by $Bad_m$ the set of badly approximable vectors in $\Rm$
as introduced in \S~\ref{s1.1}.
Our claim \eqref{eq:i} means that $\ux$ in Theorem~\ref{F} are Liouville vectors and hence clearly not badly approximable. 
Write
\[
Di_m(c)= \{ \ux\in\Rm: \psi_{\ux}(Q)\leq cQ^{-1/m}, \; Q\geq Q_0 \}
\subseteq \{ \ux\in\Rm: \Theta(\ux)\leq c \},  
\]
so that $Di_m=\cup_{c<1} Di_m(c)$ is the set of 
$m$-dimensional Dirichlet improvable vectors. Further denote by $Sing_m$ 
the set of singular vectors in $\Rm$, defined via the property $\lim_{Q\to\infty} Q^{1/m}\psi_{\ux}(Q)=0$, or 
equivalently $\cap_{c>0} Di_m(c)$. The next corollary 
of Theorem~\ref{F} slightly
refines Theorem~\ref{A}.

\begin{corollary}  \label{c}
	Let $m\geq 2$ be an integer. For any $c\in(0,1]$, the set
\[
Di_m(c) \setminus (\cup_{\epsilon>0} Di_m(c-\epsilon)\cup Bad_m)
\]
	has packing dimension at least $m-1$. 
	In particular, the same applies to the
	set
	\[
	\boldsymbol{FS}_m:= Di_m\setminus (  Bad_m \cup Sing_m  ).
	\]
\end{corollary}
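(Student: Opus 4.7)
My plan is to deduce Corollary~\ref{c} directly from Theorem~\ref{F} by making the obvious choice of approximating function. For $c\in(0,1)$ I would take $\Phi(t)=ct^{-1/m}$. For the endpoint $c=1$, the function $t^{-1/m}$ would violate $(d1)$, so I would replace it by a cosmetic perturbation strictly below $t^{-1/m}$ yet asymptotic to it, for instance $\Phi(t)=t^{-1/m}\bigl(1-1/\log\log t\bigr)$.

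First I would verify the hypotheses $(d1), (d2), (d3), (d4(1/m))$ for these choices. For $c\in(0,1)$: $(d1)$ holds since $c<1$; $(d2)$ follows from $t^{1/(m-1)}\Phi(t)=c\,t^{1/(m(m-1))}\to\infty$; $(d3)$ from $\Phi(\alpha t)/\Phi(t)=\alpha^{-1/m}\to 1$; and $(d4(1/m))$ is trivial with $\eta=c$. For the endpoint case, the slowly varying factor $1-1/\log\log t$ does not interfere with any of these four checks substantively. This step is entirely routine.

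Second, I would apply Theorem~\ref{F} to obtain a set $\mathscr{A}\subseteq\Rm$ of packing dimension at least $m(1-1/m)=m-1$ whose points $\ux$ satisfy $(C1), (C2), (C3)$, and translate into the language of the corollary. Claim $(C1)$ gives $\psi_{\ux}(Q)<\Phi(Q)\leq cQ^{-1/m}$ for large $Q$, placing $\ux$ in $Di_m(c)$. For each $\epsilon\in(0,c)$, applying $(C2)$ with $\varepsilon:=\epsilon/(2c)$ produces arbitrarily large $Q$ with $\psi_{\ux}(Q)>(c-\epsilon)Q^{-1/m}$, whence $\ux\notin Di_m(c-\epsilon)$; at the endpoint $c=1$ one simply absorbs the factor $1-1/\log\log Q$ into the $\varepsilon$-slack. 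Finally $(C3)$ with $N=1$ yields $\liminf_{Q\to\infty}Q\psi_{\ux}(Q)=0$, hence a fortiori $\liminf Q^{1/m}\psi_{\ux}(Q)=0$, so $\ux\notin Bad_m$.

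The ``in particular'' assertion is then automatic: any $\ux$ in the difference set has $\ux\in Di_m(c)\subseteq Di_m$, $\ux\notin Bad_m$ by the previous paragraph, and $\Theta(\ux)\geq c>0$ (since $\ux\notin Di_m(c-\epsilon)$ forces $\limsup Q^{1/m}\psi_{\ux}(Q)\geq c-\epsilon$ for each $\epsilon>0$), so $\ux\notin Sing_m$ either. I do not foresee any real obstacle here, as the corollary is effectively a repackaging of Theorem~\ref{F} in the specific regime $\Phi(t)\asymp ct^{-1/m}$; the only mildly delicate point is the endpoint $c=1$, handled by the perturbation above.
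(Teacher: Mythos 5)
Your proposal is correct and follows exactly the route the paper intends: Corollary~\ref{c} is stated as a consequence of Theorem~\ref{F}, and the paper explicitly remarks that Theorem~\ref{A} (and hence this corollary) corresponds to the special case $\Phi(t)=ct^{-1/m}$ for $c\in(0,1)$ with a ``slightly altered'' function at the endpoint, which is what you implement via the $1-1/\log\log t$ perturbation. The verification of $(d1)$--$(d4(1/m))$, the translation of $(C1)$, $(C2)$, $(C3)$ into membership in $Di_m(c)$, non-membership in $Di_m(c-\epsilon)$ and $Bad_m$, and the packing-dimension bound $m(1-1/m)=m-1$ all match; the only nitpick is that the inclusion $Di_m(c)\subseteq Di_m$ used for the ``in particular'' part requires $c<1$ (since $Di_m(1)=\mathbb{R}^m$), but this is harmless because a single choice of $c<1$ already yields the stated bound for $\boldsymbol{FS}_m$.
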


The latter claim extends the main result from~\cite{beretc} in two
directions. Firstly, we also
give a metrical result instead of only proving uncountability,
thereby contributing towards the metrical problem of
determining the Hausdorff dimension of $\boldsymbol{FS}_m$ formulated in~\cite[\S~3.4]{beretc}. 
Asymptotically as $c\to 0$,
our metrical bound is probably sharp up to an additive error $O(m^{-1})$ in view of claims by
Cheung and Chevallier~\cite{cheche} implying
that the set $Di_m(c)$ 
has Hausdorff dimension $m-1+\frac{1}{m+1}+o(1)$ as $c\to 0$, with
positive error term for any $c>0$.
In fact the same conclusion holds for 
the set $Di_m(c)\setminus Di_m(\delta)$
with some explicitly computable $\delta=\delta(c)\in (0,c)$
and $c$ small enough, however
results from~\cite{cheche} do not allow for taking $\delta$ arbitrarily close to $c$. 
The same should be expected for packing dimension as well, 
as suggested by the results in~\cite{dfsu1, dfsu2}.
Note also that as follows from Kleinbock and Mirzadeh~\cite[Theorem~1.5]{km},
the set $Di_m(c)$ has Hausdorff dimension less than $m$ for any $c<1$.
On the other hand, it is conjectured in~\cite[Problem~3.1]{beretc} that
$\boldsymbol{FS}_m$ has full Hausdorff dimension. 

Secondly, we emphasize that by the first claim we can
also prescribe an exact Dirichlet constant (in the simultaneous approximation setting). Note
that in~\cite{beretc, marnat} the deep, unconstructive result of
Roy~\cite{roy} on parametric geometry of numbers was used.
As a consequence of this setup, 
from~\cite[Theorem~1.5]{beretc} and~\cite{marnat}, one can
only provide a countable partition of $[0,1]$ into intervals with
each having non-empty intersection with $\mathbb{D}_m$, 
see the Appendix of the paper. 
We should however remark that
additional specifications on various exponents
of approximation within these sets $\boldsymbol{FS}_m$
can be made according to~\cite{beretc, marnat}.
We should also note that the dual setting of a linear
form in $m$ variables is
treated in~\cite{beretc}. On the other hand,
our constructive proof of Theorem~\ref{F} is elementary and rather short, based on ideas from
the proof of~\cite[Theorem~2.5]{ichostrava}, with some twists.

We further provide a considerably weaker bound regarding Hausdorff dimension, which we will denote by $\dim_H$, however still of order $\gg m$. We consider slightly
larger sets than in Corollary~\ref{c}, namely
for $m\geq 2$ an integer and $c\in[0,1]$, our focus is now on the sets
\[
\textbf{F}_{m,c}:=\bigcap_{\epsilon>0} (Di_m(c+\epsilon) \setminus Di_m(c-\epsilon))\setminus Bad_m=
\{ \ux\in\Rm: \Theta(\ux)= c \}  \setminus Bad_m\subseteq \boldsymbol{FS}_m.
\]  
We show

\begin{theorem}  \label{hdd}
	For any $m\geq 2$ and $c\in[0,1]$, we have
	\begin{equation} \label{eq:sinus2}
	\dim_H(\textbf{F}_{m,c})\geq 
	\frac{ \sqrt{m(m^2-m+1)} }{  \left( \frac{m+\sqrt{m(m^2-m+1)}}{m-1}  \right)^2 + (m-\frac{1}{m})\left( \frac{m+\sqrt{m(m^2-m+1)}}{m-1}  \right) -1  } > 0.
	\end{equation}
	Asymptotically as $m\to\infty$, uniformly in $c\in(0,1]$ we have the stronger lower bound
	\begin{equation} \label{eq:cosinus2}
	\dim_H(\textbf{F}_{m,c})\geq \frac{3}{8}m-o(m).
	\end{equation}
\end{theorem}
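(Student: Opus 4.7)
The plan is to refine the Cantor-type construction underlying Theorem~\ref{F} so that the resulting fractal has positive Hausdorff dimension rather than merely positive packing dimension. The Liouville condition $(C3)$ used to derive Theorem~\ref{A} forces $\dim_H=0$ via the Jarn\'ik--Besicovitch theorem; here it suffices to impose the weaker $\liminf_{Q\to\infty} Q^{1/m}\psi_{\ux}(Q)=0$ so as to exclude badly approximable vectors while preserving $\Theta(\ux)=c$.

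I would first choose a function $\Phi$ alternating between two regimes: ``critical'' blocks of scales on which $\Phi(t)\sim c\,t^{-1/m}$, guaranteeing $\limsup Q^{1/m}\psi_{\ux}(Q)=c$, and ``fast'' blocks on which $\Phi(t)\sim t^{-\gamma}$ for some fixed $\gamma>1/m$, guaranteeing $\liminf Q^{1/m}\psi_{\ux}(Q)=0$. Such a $\Phi$ still satisfies $(d1), (d2), (d3)$, so the inductive construction of Theorem~\ref{F} produces a nested sequence of boxes $B_n\subset\Rm$ centered at rationals $\underline{p}_n/q_n$ of side $\Phi(q_n)/q_n$, with $q_{n+1}\approx q_n^{\tau_n}$ where $\tau_n>1$ reflects the block type. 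The number of admissible children of each $B_n$ is of order $q_{n+1}^{m+1}(\Phi(q_n)/q_n)^m$, subject to the geometric obstruction coming from $(d1)$.

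For the dimension estimate, I would apply the mass distribution principle to the natural probability measure $\mu$ placing equal mass on siblings at every level. Unlike the packing-dimension argument in Theorem~\ref{F}, here $\mu(B(\ux,r))$ must be controlled uniformly over all small $r>0$, and in particular for $r$ in intermediate ranges $(q_{n+1}^{-1-1/m},q_n^{-1-1/m})$. At such scales, $B(\ux,r)$ meets at most $O((r\,q_{n+1})^{m})$ children $B_{n+1}$, and combined with the separation $\gtrsim q_{n+1}^{-2}$ between distinct rationals of denominator $\le q_{n+1}$ this yields an effective H\"older exponent for $\mu$. Maximizing this exponent over the growth parameter $\tau_n$ and the relative frequency of the two block types, subject to $\gamma\ge 1/m$ and $(d1)$, reduces to locating a critical value of $\beta=(m+\sqrt{m(m^2-m+1)})/(m-1)$, producing the explicit lower bound \eqref{eq:sinus2}.

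For the stronger asymptotic bound \eqref{eq:cosinus2}, I would decouple the coordinates: roughly $3m/8$ coordinates of $\ux$ vary freely in a Cantor-like fractal of essentially full local dimension, while the remaining coordinates are chosen to carry the approximation behavior. A direct count shows that for large $m$ the obstruction from $(d1)$ becomes asymptotically negligible, so the branching is dominated by the free-coordinate contribution, and the fraction $3/8$ appears as the maximizer of the resulting branching dimension in the limit $m\to\infty$. The principal obstacle throughout is the dimension analysis at intermediate scales rather than the construction itself: ensuring $\limsup Q^{1/m}\psi_{\ux}(Q)=c$ exactly, and not just $\le c$, without destroying the uniform H\"older estimate on $\mu$, requires a careful placement of the centers $\underline{p}_n/q_n$ within each parent box and a control of how the critical and fast blocks transition.
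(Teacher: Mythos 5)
Your high-level plan (replace the packing-dimension argument by a mass-distribution/Falconer-type estimate on a more carefully layered Cantor product, drop the Liouville clause, and optimize parameters) is broadly in the spirit of the paper, but two aspects would not go through as stated.

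First, the asymptotic $\frac{3}{8}m$ derivation via ``decoupling'' is not correct. You propose that about $3m/8$ coordinates ``vary freely in a Cantor-like fractal of essentially full local dimension'' while the rest ``carry the approximation behavior.'' That picture is incompatible with the constraint $\Theta(\ux)=c$: at the good approximation times $q$, \emph{every} component $\Vert q\xi_i\Vert$ must be small, so no coordinate can be close to full dimension. In the paper's construction the factor sets $\mathcal{Q}_i$ all encode the approximation structure; their dimensions grow with $i$ but plateau at roughly $\tfrac12$ as $i\to m$ (Lemma~\ref{ohlele}: the first entry of the minimum in \eqref{eq:01} is $\tfrac12+o(1)$ for $i$ near $m$). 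The $\tfrac{3}{8}m$ figure is not a proportion of free coordinates; it arises as the value of the sum $\sum_{i=3}^{m}\min\{\cdot,\cdot\}$, with roughly half the summands contributing $\approx i/m$ and the other half contributing $\approx\tfrac12$, summing to $\tfrac{m}{8}+\tfrac{m}{4}$. Your proposed mechanism gives no way to see why only about $3m/8$ and not all $m$ would contribute close to $1$, nor why the contribution saturates at $\tfrac12$.

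Second, for \eqref{eq:sinus2} your plan to optimize over a growth parameter $\tau_n$ and a block-frequency parameter is not the same two-parameter family as the paper uses, and you give no computation tying them to the quantity $\gamma_2 = \frac{m+\sqrt{m(m^2-m+1)}}{m-1}$. In the paper this value is the maximizer, subject to the structural constraint $\gamma_1=\tfrac{\gamma_2}{m}+1+\epsilon$ from \eqref{eq:dno}, of the single-variable function $\gamma_2\mapsto ((m-1)\gamma_2-m)/(\gamma_2^2+(m-\tfrac1m)\gamma_2-1)$ derived from \eqref{eq:02}. The constraint $\gamma_1=\gamma_2/m+1$ comes from Case~1 of the $(C1)$ argument and has no evident analogue in your ``critical vs.\ fast block frequency'' parametrization. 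Also, your alternating-$\Phi$ device is actually unnecessary: for $\Phi(t)=ct^{-1/m}$ the construction automatically has $\lambda(\uz)\ge1>1/m$ and hence is not badly approximable (and indeed the paper exploits exactly this, via $(iii^{\ast}),(iv^{\ast})$), so the ``fast blocks'' add complication without a clear gain.

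What the paper actually does that your sketch is missing: build the explicit binary-digit Cantor sets $\mathcal{Q}_i$ with free digits in intervals whose lengths depend on $i$ (items~(i),(ii),(iii) in Lemma~\ref{ohlele}), verify via Lemma~\ref{mure} that these preserve the exact Dirichlet constant (this is where the arithmetic from Proposition~\ref{uv} and the coprimality of the truncations re-enters), bound $\dim_H(\mathcal{Q}_i)$ via Falconer's nested-intervals estimate (Proposition~\ref{falke}), and finally sum the dimensions using $\dim_H(\prod A_i)\ge\sum\dim_H(A_i)$. Your ``careful placement of the centers'' remark gestures at the key difficulty but does not address the concrete mechanism (the digit positions $c_{mn+i}$ staying fixed while surrounding digits are freed) that makes both the exactness and the dimension estimate compatible.
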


The estimate \eqref{eq:sinus2} is only of order $1-o(1)$ as $m\to\infty$,
so the latter asymptotical bound \eqref{eq:cosinus2} 
is indeed significantly stronger. It is natural to expect that
$c\longmapsto \dim_H(\textbf{F}_{m,c})$ decays, which is not reflected
in our result. In contrast to Theorem~\ref{F},
our estimates \eqref{eq:sinus2}, \eqref{eq:cosinus2} 
are probably far from the true value no matter how
small $c>0$ is chosen. Indeed, it is reasonable to conjecture
$\dim_H(\textbf{F}_{m,c})= \dim_H(Di_m(c))$ for any $c\in(0,1]$, in particular 
\[
\dim_H(\textbf{F}_{m,c})= m-1+\frac{1}{m+1}+o(1), \qquad \text{as} \; c\to 0,
\]
and
\[
\dim_H(\textbf{F}_{m,c})= m-o(1), \qquad \text{as} \; c\to 1,
\] 
see the comments below Corollary~\ref{c}. See further~\cite[Theorem~2.2]{ichneu} for a
stronger bound of order $m-4+O(m^{-1})$ for the Hausdorff dimension of
the larger set of (inhomogeneously) singular vectors obtained from 
essentially the same method in a simplified setting. 
Recall there was no such discrepancy to~\cite{ichneu} for 
the packing dimension
result, as remarked below Theorem~\ref{F}.
Theorem~\ref{hdd}
can be generalized to the situation of $\Phi(t)$ as in Theorem~\ref{F} satisfying $(d1), (d2), (d3), (d4(\gamma))$
for some $\gamma>0$, we do not explicitly state it.

We next derive a theorem that contains information on ordinary
approximation as well, very much in the spirit of~\cite{beretc, marnat}. Let $\lambda(\ux)$ denote the ordinary exponent
of simultaneous rational approximation to $\ux\in\Rm$, defined as the supremum of $\lambda>0$ such that
\[
\liminf_{Q\to\infty} Q^{\lambda} \psi_{\ux}(Q) = \liminf_{q\to\infty}
q^{\lambda}\Vert q\ux\Vert
< \infty.
\]
Then $\lambda(\ux)\in [1/m,\infty]$ for any $\ux\in\Rm$ by Dirichlet's Theorem.
Denote by
\[
\mathcal{W}_m(\lambda)= \{ \ux\in\Rm: \lambda(\ux)= \lambda \}\subseteq \Rm
\]
the pairwise disjoint levelsets of vectors with precise 
ordinary exponent $\lambda\in [1/m,\infty]$.  Note that $Bad_m\subseteq \mathcal{W}_m(\frac{1}{m})$. Let $\beta=\frac{1+\sqrt{5}}{2}$ be the golden ratio.

\begin{theorem}  \label{beides}
	Let $m\geq 2$ be an integer, and $c\in(0,1)$ and 
	$\lambda\in ( \beta, \infty)$. Then the Hausdorff dimension
	of the set
	\begin{equation}  \label{eq:iset}
	\mathcal{W}_m(\lambda) \cap \textbf{F}_{m,c}\; \subseteq \; 
	\mathcal{W}_m(\lambda) \cap \boldsymbol{FS}_m
	\end{equation}
	is positive and a lower bound independent of $c$ is explicitly computable.
	Asymptotically
	as $\lambda\to \infty$, i.e. for $\lambda\geq \lambda_0(m)$,
	it is of order
	\[
	\dim_H(\mathcal{W}_m(\lambda) \cap \textbf{F}_{m,c}) \geq 
	\frac{m}{2\lambda} - O(\lambda^{-1}),
	\]
	where the implied constant is effectively computable and
	does not depend on $m, c, \lambda$.
\end{theorem}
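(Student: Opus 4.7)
The plan is to adapt the iterative rational construction underlying Theorem~\ref{F} so that the produced vector $\ux$ realizes simultaneously $\Theta(\ux)=c$ and $\lambda(\ux)=\lambda$, in place of the Liouville property \eqref{eq:i}. In Theorem~\ref{F} the vector $\ux$ arose as the nested limit of common rationals $\uy^{(n)}=(p_{n,1}/q_n,\ldots,p_{n,m}/q_n)$ with unconstrained growth of $q_{n+1}/q_n$. I would instead calibrate this growth by enforcing
\[
\Vert q_n\ux\Vert\;\asymp\;c\,q_n^{-\lambda},\qquad q_{n+1}\;\asymp\;q_n^{m\lambda},
\]
achieved by placing $\uy^{(n+1)}$ inside a thin annular region of radii $\asymp q_n^{-\lambda-1}$ around $\uy^{(n)}$. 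At $Q=q_{n+1}-1$ one then recovers $Q^{1/m}\psi_{\ux}(Q)\to c$, giving $\Theta(\ux)\geq c$, and $q_n^\lambda\Vert q_n\ux\Vert\to c$, giving $\lambda(\ux)\geq\lambda$. Since $\lambda>1/m$, the vector $\ux$ is automatically not badly approximable.

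The core step is to verify the matching upper bounds $\Theta(\ux)\leq c$ and $\lambda(\ux)\leq\lambda$. Both amount to ruling out spurious improvements at intermediate denominators $q\in(q_n,q_{n+1})$, where one must show $\Vert q\ux\Vert\gtrsim q_n^{-\lambda}$. For $q\lesssim q_n^\lambda$ this follows from the elementary bound $\Vert q\uy^{(n)}\Vert\geq 1/q_n$ for $q$ coprime to $q_n$, combined with the tail estimate $\Vert q(\ux-\uy^{(n)})\Vert\leq q\,q_n^{-\lambda-1}$. For the harder range $q\in(q_n^\lambda,q_{n+1})$ one analyses $\Vert q\uy^{(n+1)}\Vert$ via the distribution of $qp_{n+1,j}\pmod{q_{n+1}}$ and must choose $\uy^{(n+1)}$ inside the annulus so as to avoid the bad configurations. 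The algebraic self-consistency of the resulting inductive inequalities is exactly the golden-ratio condition $\lambda^2>\lambda+1$, i.e.\ $\lambda>\beta$.

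For the Hausdorff dimension bound I would retain, at each stage, all $\uy^{(n+1)}$ surviving the above selection. The annulus contains of order $q_n^{m(m\lambda-\lambda-1)}$ rationals with denominator $\asymp q_{n+1}$, a positive proportion of which survive when $\lambda>\beta$. Invoking the mass distribution principle along the resulting branching tree (as in the proof of Theorem~\ref{hdd}) yields a dimension lower bound
\[
\dim_H\bigl(\mathcal W_m(\lambda)\cap\textbf F_{m,c}\bigr)\;\geq\;\frac{m(m\lambda-\lambda-1)}{(\lambda+1)(m\lambda-1)}\;-\;o_\lambda(1),
\]
uniform in $c\in(0,1)$, since $c$ enters only as a multiplicative constant in the recursion. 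This is positive precisely when $\lambda>\beta$ (once the $o_\lambda(1)$ loss from selection is absorbed), and asymptotically at least $m/(2\lambda)-O(\lambda^{-1})$ as $\lambda\to\infty$.

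The hard part will be the intermediate-denominator analysis: for $q\in(q_n^\lambda,q_{n+1})$ the naive union bound over bad configurations of $\uy^{(n+1)}$ does not close term by term, and one must exploit the arithmetic structure of denominator-$q_{n+1}$ rationals in the annulus together with the strong correlations between the constraints for different $q$. The threshold $\lambda>\beta$ is precisely the self-consistency needed for this refined analysis to succeed, and it separates the range of parameters reachable by this method from the range $\lambda\in(1/m,\beta]$.
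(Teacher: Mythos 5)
Your high-level plan — construct vectors with simultaneously prescribed Dirichlet constant and ordinary exponent via a nested rational scheme, and derive a Hausdorff dimension bound from a branching Cantor tree — is the right shape, and you have correctly located the source of the golden-ratio threshold as the self-consistency inequality $\lambda^2>\lambda+1$, which matches the paper's condition $(\gamma_1-1)^2>\gamma_2>\gamma_1$ with $\gamma_1=\lambda+1$ in \eqref{eq:leilei}. However, there is a genuine gap precisely where you flag ``the hard part,'' and your diagnosis of which range of intermediate denominators is hard is inverted relative to how the problem actually resolves.

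In the paper's proof of Lemma~\ref{mitp}, the range $q\in(H_f^{\gamma_1-1},H_{f+1})$ (your ``hard'' range $q\in(q_n^\lambda,q_{n+1})$) is the \emph{easy} one: the universal lower bound $\Vert q\uz\Vert\geq d_f^{-1}(1+o(1))=H_f^{-\gamma_2+o(1)}$ from the $(C2)$-type argument combined with $(\gamma_1-1)^2>\gamma_2$ already forces $q<H_f^{\gamma_1-1}$ whenever $\Vert q\uz\Vert<q^{-(\gamma_1-1)-\varepsilon}$, with no modular distribution analysis needed. The genuinely delicate range is $q\leq H_f^{\gamma_1-1}=H_f^{\lambda}$, and the paper handles it with a \emph{one-dimensional} Legendre/Minkowski argument (Proposition~\ref{kp}) applied only to the first coordinate $\zeta_1$, which carries a prescribed nonzero binary digit at position $\lfloor\gamma_1 h_n\rfloor$ so that $p/H_f$ is an honest convergent-level approximation to $\zeta_1$. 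Your elementary coprimality bound $\Vert q\uy^{(n)}\Vert\geq 1/q_n$ only covers $q$ coprime to $q_n$, says nothing about $q$ sharing factors with $q_n$, and degenerates as $q\to q_n^\lambda$; you would need something replacing Proposition~\ref{kp}, and it is not provided. Relatedly, the paper sidesteps the coprime/non-coprime dichotomy by making the good denominators powers of $2$, which reduces the divisibility case analysis to tracking the largest power of $2$ dividing $q$ --- your annulus-selection construction has no analogous rigidity.

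The dimension estimate is also not rigorous as stated. You assert ``a positive proportion survive'' without any argument, and the formula $m(m\lambda-\lambda-1)/\bigl((\lambda+1)(m\lambda-1)\bigr)$ does not agree with either of the paper's expressions \eqref{eq:01} or \eqref{eq:02} (the paper gets the asymptotic $m/(2\lambda)-O(\lambda^{-1})$ from summing the per-coordinate contributions of \eqref{eq:01} over $3\leq i\leq m$, not from a single closed form); your heuristic count would need to be turned into a Falconer-type lower bound, and whether the claimed branching is actually realized depends on exactly the unresolved selection step. So the proposal correctly sketches the architecture but leaves the load-bearing lemma --- ruling out spurious good approximations for $q\leq q_n^\lambda$ --- without a mechanism, which is what Proposition~\ref{kp} supplies in the paper.
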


By Jarn\'ik-Besicovich Theorem~\cite{jarnik}, we have $\dim_H(\mathcal{W}_m(\lambda))=(m+1)/(\lambda+1)$, so 
for large $m$ our 
asymptotical bound is basically sharp
up to a factor $2$. 

We may again extend the 
claim to a setup involving in place of $\textbf{F}_{m,c}$ sets derived
from more general uniform approximation functions $\Phi$ via 
imposing $(C1), (C2)$. 
Besides, with small modifications in the proof below, we 
can prescribe the order of ordinary approximation more exactly up to
an asymptotical factor $1+o(1)$ as $Q\to\infty$. 
More precisely, take any function $\Psi: \mathbb{N}\to (0,1)$ of decay $o(t^{-\beta-\epsilon})$ as $t\to\infty$ for some $\epsilon>0$,
and conversely satisfying $(d4(\gamma))$ for some $\gamma>0$.
Derive $\mathcal{W}_m(\Psi)$ the set of $\uz\in\Rm$ satisfying
\[
\liminf_{Q\to\infty} \frac{ \psi_{\uz}(Q)}{\Psi(Q)}=
\liminf_{q\to\infty} \frac{\Vert q\uz\Vert}{\Psi(q)} =1.
\]
Then $\mathcal{W}_m(\Psi)\cap \textbf{F}_{m,c}$ has positive Hausdorff dimension, effective lower bounds can be given subject to 
the decay rate of $\Psi$. The special case $\Psi(t)=\Psi_{a,\lambda}(t):=at^{-\lambda}$ for $\lambda>\beta$ and $a>0$ parameters turns out 
to result in the same bounds as Theorem~\ref{beides} (see~\S~\ref{gen}), thereby 
refining it since $\mathcal{W}_m(\Psi_{a,\lambda})\subsetneq \mathcal{W}_m(\lambda)$. We sketch the proof of this generalized claim in~\S~\ref{gen}, but
want to compare this version of
Theorem~\ref{beides}
to~\cite{marnat}. There it was shown that there is 
some explicitly computable 
$\kappa_m\in (0,1)$, so that for any $\lambda>1/m$
and fixed $c\in(0,1]$, the set of $\uz\in\Rm$  
inducing simultaneously
\[
\liminf_{Q\to\infty} \frac{\psi_{\uz}(Q)}{\Psi_{a,\lambda}(Q)}\in[\kappa_m,1],\qquad \limsup_{Q\to\infty} \frac{\psi_{\uz}(Q)}{cQ^{-1/m}}\in[\kappa_m,1],
\]
is uncountable. Our claim allows for prescribing the order of 
both ordinary and uniform approximation considerably more precisely, provides a metric claim, and 
permits more flexibility in the choice of functions $\Psi$ locally,  
for the cost of requiring a faster decay rate for $\Psi$.

The lower bound $\beta$ for $\lambda$ can in fact be improved to $\lambda>1$ with a rather technical argument, 
we prefer to only sketch the proof in~\S~\ref{gen} below.
However, that seems to be the limit of the method. On the other hand,
we strongly expect the Hausdorff dimension of the sets
in \eqref{eq:iset} to decay
as a function of $\lambda\geq 1/m$ (and of $c$ as well). The above remarks on more general $\Psi$ still apply for any $\Psi(t)=o(t^{-1-\epsilon})$.

\section{Cantor sets and other norms}

\subsection{Dirichlet spectrum for Cantor sets}  \label{fra}

For $b\geq 2$ an integer and $R\in (0,1)$ parameters,
we define a modified decay property $(d3^{\prime}(b,R))$ and a Diophantine property $(D(b))$ on functions $\Phi: \mathbb{N}\to (0,1)$.

\begin{definition} Let $b\geq 2$ an integer and $R\in (0,1)$. We define
	\begin{itemize}
		\item[$(d3^{\prime}(b,R))$] We have
		\[
		\Phi(bt) > R\Phi(t), \qquad t\geq t_0.
		\]
		\item[(D(b))] For any fixed $\epsilon>0$, the inequalities
			\begin{equation*}  
		(1-\epsilon)\Phi(b^B) < b^{-A} < \Phi(b^B) 
		\end{equation*}
		hold for certain arbitarily large pairs of 
		positive integers $A,B$.
	\end{itemize}
\end{definition}

Property $(d3^{\prime}(b,R))$ relaxes $(d3)$.
The Diophantine property $(D(b))$ is rather mild and applies
to most reasonable functions, however with the unfortunate
exception of functions
$\Phi(t)=ct^{-r/s}$ for a rational number $r/s$ and $c>0$.

For $b\geq 2$ an integer and $W\subseteq \{ 0,1,\ldots, b-1\}$
of cardinality $|W|\geq 2$, define the Cantor set $C_{b,W}$ as the
set of all real numbers that admit a base $b$ representation 
\[
\sum_{j=1}^{\infty} w_jb^{-j}, \qquad w_j\in W.
\]
The Cantor middle
third set just becomes $C_{3,\{0,2\}}$.
Let $W_1,\ldots, W_m$ be arbitrary sets $W$ as above to a uniformly chosen base $b$, and define the Cartesian product set
\[
K= \prod_{i=1}^{m} C_{b, W_i}. 
\]
In the special case $\{0,1\}\subseteq W_i$ for all $1\leq i\leq m$ we say $K$ is good.
We claim that in the assertions of~\S~\ref{se2.2}, we may restrict to
$\ux\in K$ upon taking a smaller, fixed multiplicative constant $R<1$ in $(C2)$ and adjusting the metrical claims. 
Assuming some Diophantine condition,
we even get a precise analogue of Theorem~\ref{F}. 
Let us first define some more properties.
Denote by $\Gamma_b=lcm(1,2,\ldots,b-1)$ the least common multiple of the positive integers up to $b-1$ and by $\dim(K)= \sum_{i=1}^{m} \log |W_i|/\log b$ the
Hausdorff (or packing) dimension of $K$.

\begin{theorem}  \label{2}
	Let $m\geq 2$ and $K$ be any set as above. 
	Assume $\Phi$ satisfies $(d1), (d2)$. 
	\begin{itemize}
		\item[(i)] 
	Assume $\Phi$ also satisfies $(d3^{\prime}(b,R))$ 
	for some given $R\in (0,1)$. Let 
\[
\Omega=\Omega(b,R)= \begin{cases}
R, \qquad\qquad\qquad\qquad\quad \text{if}\; $K$ \; \text{is good}, \\
R^{b+2}(b-1)^{-3}\Gamma_{b}^{-1}, \qquad\; \text{otherwise}. 
\end{cases}
\]
	Then there exist uncountably many
	$\ux\in K$ for which we have $(C1), (C3)$ and 	
	\begin{itemize}
		\item[$(C2^{\prime})$] We have
		\begin{equation*} 
		\psi_{\ux}(Q) > \Omega\Phi(Q)
		\end{equation*}
		for certain arbitrarily large $Q$.
	\end{itemize}
	\item[(ii)] Assume $\Phi$ satisfies $(D(b))$ as well and $K$ is good. Then
	there exist uncountably many $\ux\in K$ satisfying $(C1), (C2)$
	and $(C3)$.
\end{itemize}

	If $K=C_{b,\{0,1\}}^m$ and for some $\gamma>0$ the function $\Phi$ satisfies $(d4(\gamma))$,
then the packing dimension of the set of $\ux$ in (i), (ii) is at 
least $\dim(K)(1-\gamma)=m (1-\gamma)\log 2/\log b$. Moreover, 
the set of vectors satisfying
$(C1^{\prime})$ and $(C2^{\prime})$ resp.
$(C1^{\prime})$ and $(C2)$ in (i) resp. (ii), 
has positive Hausdorff dimension, where $(C1^{\prime})$ is $(C1)$ upon admitting an additional factor $1+\varepsilon$.
\end{theorem}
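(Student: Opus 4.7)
The approach is to adapt the construction behind Theorem~\ref{F}, but force the denominators of the approximating rationals to be powers of $b$ so that the corresponding coordinates can be prescribed as base-$b$ fractions with digits in $W_i$. Writing each coordinate as $x_i=\sum_{j\geq 1}w_{ij}b^{-j}$ with $w_{ij}\in W_i$, we determine digits in blocks corresponding to an increasing scale sequence $B_1<B_2<\cdots$. The digits near each $B_n$ are chosen to pin $\Vert b^{B_n}\ux\Vert$ at a prescribed value, while long blocks between consecutive $B_n,B_{n+1}$ are filled to yield much smaller tail sums, thereby producing the Liouville property $(C3)$.

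For part~(i), at each scale $B_n$ we aim for $\Vert b^{B_n}x_i\Vert$ just below $\Phi(b^{B_n})$. If $K$ is good, then $\{0,1\}\subseteq W_i$ and base-$b$ expansions using only digits $0$ and $1$ densely fill $[0,(b-1)^{-1}]$, so such a prescription is possible. Combining with $(d3^{\prime}(b,R))$ across the intermediate integer scales $b^{B_n},b^{B_n+1},\ldots,b^{B_{n+1}-1}$ keeps $\psi_{\ux}(Q)<\Phi(Q)$ throughout $[b^{B_n},b^{B_{n+1}})$, while at $Q$ just short of $b^{B_{n+1}}$ we still retain $\psi_{\ux}(Q)>R\,\Phi(Q)$, yielding $(C2^{\prime})$ with $\Omega=R$. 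When $K$ is not good, the attainable tail values form a sparser subset of $[0,(b-1)^{-1}]$; by selecting an extremal digit of $W_i$ and clearing divisibility obstructions via multiples of $\Gamma_b/(b-1)$, one still matches $\Phi(b^{B_n})$ up to the stated factor $R^{b+2}(b-1)^{-3}\Gamma_b^{-1}$.

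For part~(ii), given any $\varepsilon>0$, property $(D(b))$ supplies integer pairs $(A_n,B_n)$ with $(1-\varepsilon)\Phi(b^{B_n})<b^{-A_n}<\Phi(b^{B_n})$. Since $K$ is good, we arrange $\Vert b^{B_n}x_i\Vert=b^{-A_n}$ (up to a negligible tail) by placing the digits $w_{i,B_n+1},\ldots,w_{i,B_n+A_n-1}$ equal to $0$ and $w_{i,B_n+A_n}=1$. This upgrades $(C2^{\prime})$ to the full conclusion $(C2)$, while the verification of $(C1)$ and $(C3)$ proceeds as in~(i) after selecting the scales $(A_n,B_n)$ to grow sufficiently fast.

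For the metric statements with $K=C_{b,\{0,1\}}^m$, we exploit the binary freedom in the digit slots between the constrained windows near the $B_n$. The packing dimension bound $\dim(K)(1-\gamma)=m(1-\gamma)\log 2/\log b$ transfers directly from Theorem~\ref{F} upon replacing the ambient packing dimension $m$ by $\dim(K)$ in the covering argument. For the positive Hausdorff dimension claim under the slightly relaxed $(C1^{\prime})$, the additional factor $1+\varepsilon$ provides enough slack to run a genuinely branching sub-Cantor construction with uniformly bounded-from-below branching factor per scale, and the mass distribution principle then produces a set of positive Hausdorff dimension. The most delicate step is the bookkeeping in the non-good case of~(i): the restricted digit set $W_i$ forces the cumulative loss factor $\Omega$, where the exponents $(b-1)^{-3}$ and $\Gamma_b^{-1}$ arise from iteratively clearing remainder terms when the available digits of $W_i$ do not span small residues modulo convenient divisors.
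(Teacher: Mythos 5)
The good-case skeleton in your proposal (denominators as powers of $b$, block-wise digit prescription in $W_i=\{0,1\}$, using $(d3^{\prime}(b,R))$ to relate $\Phi$ at consecutive $b$-powers and $(D(b))$ to upgrade $(C2^{\prime})$ to $(C2)$) is indeed essentially the paper's route: the paper takes $a_j=b^{c_j}$ in the Theorem~\ref{F} construction, redefines $L_n$ as a power of $b$, and uses the freedom $a_{mn+1}=M_n'a_{mn}$ to align $c_{mn+1}=A_n$ and $c_{mn+m-1}+\ell_n=B_n$ with the pairs coming from $(D(b))$.

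The genuine gap is your treatment of the non-good case. You propose to carry out the digit construction directly in $C_{b,W_i}$ by ``selecting an extremal digit of $W_i$ and clearing divisibility obstructions via multiples of $\Gamma_b/(b-1)$,'' and you assert that this yields the constant $R^{b+2}(b-1)^{-3}\Gamma_b^{-1}$, but you never explain how. There is a real obstruction: when $0\notin W_i$, the Theorem~\ref{F}--type construction (summing reciprocals of the $a_j$, or flipping isolated digits from $0$ to $1$) simply does not produce elements of $C_{b,W_i}$, and the arithmetic structure that makes $\Vert q\xi_i\Vert$ computable (coprimality via Proposition~\ref{uv}, the reduced denominators being powers of $b$) is lost. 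The paper sidesteps this entirely: it uses the affine identity $C_{b,W_i}=w_{i,1}C_{b,\{0,1\}}+w_{i,2}/(b-1)$ to write $\xi_i=w_{i,1}\theta_i+w_{i,2}/(b-1)$ with $\ut$ built in the good case, and then carries out a two-sided transference of solutions: forward (a solution $q$ for $\ut$ gives $q^{\ast}=(b-1)q$ for $\ux$, with one application of $(d3^{\prime})$, yielding $(C1)$), and backward (a hypothetical good solution $q^{\ast}$ for $\ux$ would give $q=\mathrm{lcm}(q_1,\dots,q_m)\le\Gamma_b(b-1)q^{\ast}$ a too-good solution for $\ut$, contradicting $(C2^{\prime})$ for $\ut$). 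The factors $(b-1)^{-3}$, $\Gamma_b^{-1}$ and $R^{b+2}$ arise precisely from the denominator $(b-1)$ in the affine map, the lcm bound, and iterating $(d3^{\prime})$ to bridge the scale jump $Q^{\ast}\mapsto (b-1)\Gamma_b Q^{\ast}\le b^{b}Q^{\ast}$; none of this appears in your sketch, so the claim that your direct approach ``still matches $\Phi(b^{B_n})$ up to the stated factor'' is unsupported. Absent that reduction, your non-good argument is not a proof.

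A secondary, smaller issue: even in the good case you never address why the coprimality step (the analogue of Proposition~\ref{uv}, which the paper needs to guarantee the reduced denominator of the truncated sum is exactly $a_{mf+1}$) goes through when $a_j=b^{c_j}$ and $b$ is not prime; the paper notes this ``is easily checked,'' but it is the hinge of the lower-bound argument $(C2^{\prime})$ and should at least be mentioned.
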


\begin{remark}  \label{reh}
	As $b\to\infty$, we have $\Gamma_{b}=e^{(1+o(1))(b-1)}$ by Prime Number Theorem. It thus follows from our proof that the bound for $\Omega$ can be improved asymptotically as $b\to\infty$ by replacing numerator
	in the ``otherwise'' formula
	 by $R^{b/\log b+O(1)}$. Moreover, $\Omega$ can be significantly improved  
	when $m$ is small compared to $b$. In particular we can always
	replace $\Gamma_b$ by the maximum number that appears
	as $lcm$ of any $m$ positive integers at most $b-1$, hence 
	a crude upper estimate is given by $(b-1)^m$.
\end{remark}

We will show that when $\Phi(q)=cq^{-1/m}$, property $(d3^{\prime}(b,R))$ holds for $R=b^{-1/m}$.
Hence we get the following variant of Corollary~\ref{c}. 

\begin{corollary}  \label{ok}
Let $m\geq 2$ and $K$ as above. Let 
\[
\sigma=\sigma(m,b)= \begin{cases}
b^{-1/m}, \qquad\qquad\qquad\qquad \text{if}\; K \;\text{is good}, \\
b^{-(b+2)/m}(b-1)^{-3}\Gamma_b^{-1}, \quad \text{otherwise}.
\end{cases}
\]
Then $\sigma\in(0,1)$ and for any $c\in(0,1]$ the set
	\[
	K\cap (Di_m(c)\setminus (Di_m(\sigma c) \cup Bad_m)) \subseteq K\cap \boldsymbol{FS}_m
	\]
	is uncountable. If $K=C_{b,\{0,1\}}^m$ it 
	has packing dimension at least $\dim(K)(1-1/m)=(m-1)\log 2/\log b$ and positive Hausdorff dimension.
	 In particular the same holds for
	$K\cap \boldsymbol{FS}_m$.
\end{corollary}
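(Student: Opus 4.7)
The plan is to specialize Theorem~\ref{2}(i) to the function $\Phi(Q)=cQ^{-1/m}$, treating $c\in(0,1)$ in detail; the boundary case $c=1$ is handled by a logarithmic perturbation of $\Phi$ as indicated below Theorem~\ref{F}.

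I verify the three hypotheses of Theorem~\ref{2}(i). Property $(d1)$ is immediate from $c<1$. Property $(d2)$ reduces to $t^{1/(m-1)-1/m}\to\infty$, which holds for $m\geq 2$. Property $(d3^{\prime}(b,R))$ is verified by the direct identity $\Phi(bt)/\Phi(t)=b^{-1/m}$ at the boundary value $R=b^{-1/m}$, as explicitly noted in the text preceding the corollary. Substituting this $R$ into $\Omega(b,R)$ in Theorem~\ref{2}(i) reproduces $\sigma(m,b)$ in both the good and the non-good cases; in particular $\sigma\in(0,1)$ is clear from the piecewise formula.

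I then translate the three conclusions of Theorem~\ref{2}(i). Claim $(C1)$ gives $\psi_{\ux}(Q)<cQ^{-1/m}$ for all large $Q$, that is, $\ux\in Di_m(c)$. Claim $(C2^{\prime})$ gives $\psi_{\ux}(Q)>\sigma cQ^{-1/m}$ for arbitrarily large $Q$, so $\ux\notin Di_m(\sigma c)$ and $\Theta(\ux)\geq\sigma c>0$, forcing $\ux\notin Sing_m$. Claim $(C3)$ is the Liouville property, which excludes $Bad_m$. Together these place the uncountably many vectors produced by Theorem~\ref{2}(i) in $K\cap(Di_m(c)\setminus(Di_m(\sigma c)\cup Bad_m))\subseteq K\cap\boldsymbol{FS}_m$, establishing the first assertion.

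For the metrical refinements in the case $K=C_{b,\{0,1\}}^m$, note that $\Phi(Q)=cQ^{-1/m}$ satisfies $(d4(\gamma))$ at $\gamma=1/m$ (take any $\eta<c$), so the packing dimension clause of Theorem~\ref{2} yields at least $\dim(K)(1-1/m)=(m-1)\log 2/\log b$. Positive Hausdorff dimension follows from the moreover clause of Theorem~\ref{2}; the main (though routine) technicality is that this clause replaces $(C1)$ by the weaker $(C1^{\prime})$ with an additional factor $1+\varepsilon$. I expect to resolve this by running the construction with parameter $c/(1+\varepsilon)$ in place of $c$, which keeps the output inside $Di_m(c)$ at the minor cost of replacing $\sigma c$ by $\sigma c/(1+\varepsilon)$ in the Dirichlet-exclusion; since the resulting vectors still satisfy $\Theta(\ux)>0$ and are Liouville, they lie in $K\cap\boldsymbol{FS}_m$ and witness positive Hausdorff dimension, as required.
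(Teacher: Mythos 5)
Your proposal follows the same route as the paper's own short proof in \S 8: verify the hypotheses of Theorem~\ref{2}(i) for $\Phi(Q)=cQ^{-1/m}$ (the paper packages the $(d3')$-check as Lemma~\ref{lemuren}; your direct ratio computation $\Phi(bt)/\Phi(t)=b^{-1/m}$ is the same observation), insert $R=b^{-1/m}$ into $\Omega(b,R)$ to obtain $\sigma(m,b)$, and translate $(C1), (C2'), (C3)$ into the set-theoretic assertions. The treatment of the uncountability and packing-dimension claims is correct.

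There is, however, one concrete error in your last paragraph. You justify $\ux\notin Bad_m$ for the positive-Hausdorff-dimension set by saying the vectors ``are Liouville.'' They are not: the Hausdorff-dimension clause of Theorem~\ref{2} explicitly drops condition $(C3)$, and it must do so, since by the Jarn\'ik--Besicovich theorem the set of Liouville vectors has Hausdorff dimension $0$ (the paper makes exactly this point below Theorem~\ref{A}). The $Bad_m$-exclusion in that clause has to be deduced differently, namely from the fact that the constructed Cantor-type vectors $\uz\in\prod\mathcal Q_i$ have ordinary exponent $\lambda(\uz)\geq 1>1/m$ (this is visible in the construction and is noted explicitly in \S~\ref{put}), whence $\uz\notin\mathcal W_m(1/m)\supseteq Bad_m$. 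With that substitution your argument goes through. The secondary issue you flag yourself --- that running the machinery with $c/(1+\varepsilon)$ only excludes $Di_m(\sigma c/(1+\varepsilon))$, a slightly smaller set than $Di_m(\sigma c)$, so that one lands in a superset of the stated set --- is a genuine imprecision, but it is inherited verbatim from the paper, whose \S 8 simply asserts that ``Corollary~\ref{ok} follows from part (i) of Theorem~\ref{2}'' without addressing the $(C1)$ versus $(C1')$ gap; it does not originate with your write-up.
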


We may calculate an effective positive lower bound 
for the Hausdorff dimension as well with the method of~\S~\ref{put},
however the expression becomes rather cumbersome.
Moreover, a variant of Theorem~\ref{beides} can be derived,
we do not state it.
Corollary~\ref{ok} induces a countable partition of $[0,1]$ into 
intervals that each have
non-empty intersection with $\mathbb{D}_m\cap K$, similar 
as in~\cite{beretc, marnat} (or ~\cite{cheche}), see Theorem~\ref{bt} in the Appendix of the paper, but restricting to fractal sets.
We present a class of functions that satisfy $(D(b))$ 
for any $b\geq 2$ and thus the 
stronger claim of Theorem~\ref{2} for good $K$.

\begin{corollary}  \label{rok}
	Assume $K$ is good.
	 Let $\Phi(q)=cq^{-\tau}$ for irrational $\tau\in(\frac{1}{m}, \frac{1}{m-1})$
	 and any $c>0$.
	 Then $\Phi$ satisfies $(d1), (d2), (D(b))$ and thus the set
	 of vectors $\ux\in K$ 
	 satisfying $(C1), (C2)$ and $(C3)$
	 	has packing dimension at least $\dim(K)(1-\tau)=\sum_{i=1}^{m} \log |W_i|(1-\tau)/\log b$, and if we drop $(C3)$ also positive Hausdorff dimension.
\end{corollary}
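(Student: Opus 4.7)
The proof is essentially a verification exercise: we check that $\Phi(q)=cq^{-\tau}$ satisfies every hypothesis of Theorem~\ref{2}(ii), then read off the conclusions from its dimensional supplement.

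Properties $(d1)$ and $(d2)$ are immediate from $\tau\in(1/m,\, 1/(m-1))$. Indeed, $\Phi(t)/t^{-1/m}=ct^{1/m-\tau}\to 0$ since $\tau>1/m$, giving $(d1)$ for $t\geq t_0(c,\tau,m)$; and $t^{1/(m-1)}\Phi(t)=ct^{1/(m-1)-\tau}\to\infty$ since $\tau<1/(m-1)$, giving $(d2)$. The same $\Phi$ trivially satisfies $(d4(\tau))$ with, say, $\eta=c/2$.

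The heart of the matter is the Diophantine condition $(D(b))$. Writing $\Phi(b^B)=cb^{-B\tau}$, for given $\varepsilon>0$ the inequalities in $(D(b))$ become
\[
B\tau-\log_b c\;<\;A\;<\;B\tau-\log_b c+\delta,\qquad \delta:=\log_b\tfrac{1}{1-\varepsilon}>0.
\]
Such an integer $A$ exists precisely when the fractional part $\{B\tau-\log_b c\}$ lies in $(1-\delta,1)$, in which case one may take $A=\lceil B\tau-\log_b c\rceil$. Because $\tau$ is irrational, Weyl's equidistribution theorem guarantees that $\{B\tau-\log_b c\}_{B\geq 1}$ is equidistributed modulo $1$; hence the required fractional-part condition holds for a set of $B$ of positive density. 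Since $A\sim B\tau\to\infty$ as $B\to\infty$, these yield arbitrarily large pairs of positive integers $A,B$, verifying $(D(b))$. This Weyl step is the only genuinely Diophantine input and is the main (but very mild) obstacle of the argument.

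With $(d1),(d2),(D(b))$ established and $K$ good, Theorem~\ref{2}(ii) produces uncountably many $\ux\in K$ satisfying $(C1),(C2),(C3)$. The packing dimension lower bound then follows from the final assertion of Theorem~\ref{2} invoked with $\gamma=\tau$, since $(d4(\tau))$ holds; the formulation for general good $K=\prod_i C_{b,W_i}$ in place of $K=C_{b,\{0,1\}}^m$ is obtained by the same construction upon using all digits in each $W_i$ in parallel, which is what produces the additive bound $\sum_{i=1}^{m}\log|W_i|(1-\tau)/\log b=\dim(K)(1-\tau)$. The positive Hausdorff dimension claim upon dropping $(C3)$ is likewise a direct consequence of the last sentence of Theorem~\ref{2}, completing the proof.
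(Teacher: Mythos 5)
Your proof is correct and follows the same path as the paper: verify $(d1),(d2),(d4(\tau))$ by elementary computation, verify $(D(b))$ using irrationality of $\tau$, then invoke Theorem~\ref{2}(ii) together with its metrical supplement. The only cosmetic difference is in the $(D(b))$ step: you appeal to Weyl's equidistribution theorem (which in fact gives positive density of admissible $B$), whereas the paper's Lemma~\ref{lemur} only uses the weaker density/Kronecker statement that $\{A-\tau B: A,B\in\mathbb{Z}\}$ is dense in $\mathbb{R}$; both are adequate and essentially the same Diophantine input. Your remark that the packing-dimension bound for general good $K$ (rather than just $K=C_{b,\{0,1\}}^m$, which is what Theorem~\ref{2} states literally) is obtained by allowing the free digits to range over all of $W_i$ is the right reading of the implicit generalization, and is in fact more careful than the paper's one-line deduction.
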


Unfortunately, the function $\Phi(t)=ct^{-1/m}$ does not satisfy 
hypothesis $(D(b))$ for any $b\geq 2$, therefore we cannot conclude that $\mathbb{D}_m\cap K=[0,1]$. We want to remark
that for ordinary simultaneous approximation and fast enough decaying $\Phi$, similar results on exact approximation to successive powers of elements of Cantor sets $C_{b,W}$, thereby
restricting to the Veronese curve,
have been
obtained in~\cite[\S 2.2]{ichmjnt}.

The sets $K$ in this section can be obtained as the attractor
of an iterated function system (IFS) consisting of a finite set of contracting maps
$f_j(\underline{x})= \underline{x}/b+ \underline{u}_j/b$ 
on $\Rm$ with integer
vectors $\underline{u}_j\in \mathbb{Z}^m$. We ask whether our results
extend to more general situations.

\begin{problem}
	Let $K\subseteq \Rm$ be any uncountable attractor of an IFS. Is the
	set $K\cap \boldsymbol{FS}_m$
	non-empty? Is it at least true for any IFS consisting of 
	contracting functions of the form
	$f_j(\underline{x})= A_j\underline{x}+\underline{b}_j$ with
	$A_j\in\mathbb{Q}^{m\times m}, \underline{b}_j\in\mathbb{Q}^m$?
\end{problem}

\subsection{Other norms}

Let us call a norm 
$|.|$ on $\Rm$ expanding
if $|\underline{x}|\geq |\pi_j(\underline{x})|$ for all $\underline{x}\in\Rm$ and $1\leq j\leq m$, 
where $\pi_j$ are the orthogonal projections to the coordinate axes.
Then

\begin{corollary}  \label{rocky}
	Let $|.|$ be any expanding norm on $\Rm$. Let $\underline{e}_i=(0,0,\ldots,0,1,0,\ldots,0)$, $1\leq i\leq m$, be the canonical base vectors and let $\chi:= \min_{1\leq i\leq m} |\underline{e}_i|>0$. Then $[0,\chi]\subseteq \mathbb{D}_{m}^{|.|}$
	where $\mathbb{D}_{m}^{|.|}$ is the Dirichlet spectrum with respect to $|.|$.
\end{corollary}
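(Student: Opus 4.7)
For $c=0$, any singular vector (for instance any $\underline{x}\in\Rm$ whose coordinates together with $1$ are $\mathbb{Q}$-linearly dependent) has $\Theta^{|.|}(\underline{x})=0$ for every norm, so it suffices to exhibit, for each $c\in(0,\chi]$, some $\underline{x}\in\Rm$ with $\Theta^{|.|}(\underline{x})=c$. Fix $j_0\in\{1,\ldots,m\}$ with $|\underline{e}_{j_0}|=\chi$ and set $c':=c/\chi\in(0,1]$.

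The plan is to apply Theorem~\ref{F} with $\Phi(t)=c't^{-1/m}$ (resp. a tiny perturbation when $c'=1$, as noted after Theorem~\ref{F}), and to additionally arrange within its construction that the critical approximation error concentrates on the $j_0$-th coordinate. Explicitly, I would produce $\underline{x}=(\xi_1,\ldots,\xi_m)$ with a sequence $q_k$ of critical denominators realizing $(C2)$, such that the associated integer vectors $\underline{p}_k\in\mathbb{Z}^m$ satisfy $|q_k\xi_j-p_{k,j}|=o(q_k^{-N})$ for every $N$ and every $j\ne j_0$, while $|q_k\xi_{j_0}-p_{k,j_0}|\sim c'q_k^{-1/m}$. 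This can be arranged by taking the auxiliary coordinates $\xi_j$, $j\ne j_0$, as rapidly convergent Liouville series with nested denominators $q_1\mid q_2\mid\cdots$ matching the construction of Theorem~\ref{F}, so that at each scale $q_k$ these coordinates are super-polynomially close to integers.

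For the upper bound, the triangle inequality for $|\cdot|$ gives at each critical $q_k$
\[
\|q_k\underline{x}-\underline{p}_k\|_{|.|}\leq|\underline{e}_{j_0}|\cdot|q_k\xi_{j_0}-p_{k,j_0}|+\sum_{j\ne j_0}|\underline{e}_j|\cdot|q_k\xi_j-p_{k,j}|=c\,q_k^{-1/m}(1+o(1)).
\]
Interpolating over $Q\in[q_k,q_{k+1})$ yields $\psi_{\underline{x}}^{|.|}(Q)\leq c(1+o(1))Q^{-1/m}$ for all large $Q$, hence $\Theta^{|.|}(\underline{x})\leq c$. For the lower bound, the expanding hypothesis $|\underline{y}|\geq|\pi_j(\underline{y})|$ yields $|\underline{y}|\geq\max_j|y_j||\underline{e}_j|\geq\chi\|\underline{y}\|_\infty$ for all $\underline{y}\in\Rm$, which upon minimising over integer translates gives $\psi_{\underline{x}}^{|.|}(Q)\geq\chi\psi_{\underline{x}}(Q)$ for every $Q$. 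Combining this with $(C2)$ applied to $\Phi(t)=c't^{-1/m}$, we obtain $\psi_{\underline{x}}^{|.|}(Q)>(1-\varepsilon)cQ^{-1/m}$ at arbitrarily large $Q$, so $\Theta^{|.|}(\underline{x})\geq c$.

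The main obstacle is showing that Theorem~\ref{F}'s iterative construction admits the refinement above while preserving $(C1)$ and $(C2)$ for the max norm. The subtle point is that concentrating all the max-norm approximation error on $\xi_{j_0}$ must not introduce spurious good simultaneous approximations at denominators $q\notin\{q_k\}$. Since both $\xi_{j_0}$ and the auxiliary $\xi_j$ share the same nested sequence of good denominators, for any $q$ outside this sequence $\|q\underline{x}\|_\infty$ is bounded below by $\|q\xi_{j_0}\|$, which is not small. The resulting bookkeeping is routine and is the only technical step beyond the existing argument of Theorem~\ref{F}.
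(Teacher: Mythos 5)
Your lower-bound argument (that expansiveness gives $|\underline{y}|\geq\chi\|\underline{y}\|_\infty$, hence $\psi_{\ux}^{|.|}(Q)\geq\chi\psi_{\ux}(Q)$, so $(C2)$ transfers) is correct and is exactly what the paper does. The upper bound, however, deviates from the paper and has two genuine gaps.

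First, the construction you propose is both unnecessary and problematic. You want the auxiliary coordinates $\xi_j$, $j\ne j_0$, to be super-polynomially close to integers at the critical denominators $q_k$. In the paper's construction from~\S~\ref{ko}, that is false: at $q=d_f=a_{mf+1}$ the errors $\Vert q\xi_i\Vert$ for $i\ne 1,2$ decay only polynomially, like $d_f^{-(i-1)}$ (and $\Vert q\xi_m\Vert\asymp (L_fd_f^{m-2})^{-1}$), not like $o(q^{-N})$ for all $N$. The point the paper exploits — and which your proposal misses — is that the \emph{unmodified} construction already concentrates the error: at the critical scale only $\xi_2$ contributes at order $d_f^{-1}\asymp cQ^{-1/m}$, and every other coordinate is $o(Q^{-1/m})$, which is all the triangle inequality needs. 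One then simply relabels so that $\underline{e}_2$ attains $\min_i|\underline{e}_i|=\chi$ and reads off $\Theta^{|.|}(\ux)=c\chi$. No modification is required. If instead you really force all $\xi_j$, $j\ne j_0$, to share good denominators $q_k$ with super-polynomial errors there, you destroy the staggered structure $a_{mn+2},\ldots,a_{m(n+1)-1}$ that drives Case~II of the $(C2)$ proof: that proof establishes the lower bound $\Vert q\ux\Vert\gtrsim d_f^{-1}$ for \emph{all} $q\le Q_f$ precisely because different coordinates $\xi_h$ supply the lower bound for different residues of $s(q)\bmod m$. Calling the replacement bookkeeping ``routine'' conceals the fact that the mechanism is gone.

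Second, the ``interpolation'' step in your upper bound is not valid as stated. From $\Vert q_k\ux\Vert_{|.|}\leq c(1+o(1))q_k^{-1/m}$ you cannot conclude $\psi_{\ux}^{|.|}(Q)\leq c(1+o(1))Q^{-1/m}$ for all $Q\in[q_k,q_{k+1})$, because $Q^{-1/m}$ decays strictly faster than the constant value $q_k^{-1/m}$ along the interval; the inequality would fail already for $Q$ of size $(1+\delta)q_k$ unless fresh good denominators appear. In the paper this is supplied by the $(C1)$ case analysis (Cases 1--3 of~\S~\ref{proof}) which produces the right $q$ for each range of $Q$; your proposal omits the substitute.

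In short: keep the lower-bound half, but for the upper bound use the vector from Theorem~\ref{F} unchanged, observe that at $q=a_{mf+1}$ the coordinate $\xi_2$ dominates while $\Vert q\xi_i\Vert=o(Q^{-1/m})$ for $i\ne 2$, relabel so $|\underline{e}_2|=\chi$, and appeal to the already-proved $(C1)$ for the max norm to cover the intermediate ranges of $Q$.
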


The result in particular applies to any $p$-norm 
$|\underline{x}|_p=(\sum |x_i|^p)^{1/p}$ and shows that
the interval $[0,1]$ is contained
in the according Dirichlet spectrum. This may be compared with the results for $m=2$ and Euclidean norm $p=2$ 
by Akhunzhanov, Shatoskov~\cite{as} and Akhunzhanov, Moshchevitin~\cite{am2} recalled in~\S~\ref{s1.1}. 
The deduction of Corollary~\ref{rocky} relies on the fact that for the real vectors $\ux$ constructed in~\S~\ref{proof}, for any $Q$ inducing large values of $Q^{1/m}\psi_{\ux}(Q)$ we may choose
$q<Q$ so that some component $\Vert q\xi_i\Vert$ of our choice 
induces a small value of $\Vert q\ux\Vert$ and 
significantly outweighs all the other $\Vert q\xi_j\Vert, j\neq i$.
This will give us any desired upper bound in $[0,\chi]$ 
for the Dirichlet constant.
The reverse lower bound will be immediate
from the assumption of the norm being expanding and our results
for the maximum norm.
We provide more details in~\S~\ref{crproof}.

\section{Some remarks and forthcoming work}

Our results can be interpreted as prescribing 
extremal values (primarily maxima, but also minima in Theorem~\ref{beides}) of the first successive minimum of some
classical parametric lattice point problem, see for example~\cite{ss},
in particular~\cite[Theorem~1.4]{ss}. 
Our method relies heavily on ideas from the proof of~\cite[Theorem~2.5]{ichostrava}, which contains a 
considerably good description of higher successive minima functions
as well.
Thus it seems plausible that similar results on extremal values
of the according parametric higher successive minima functions can be obtained when
combining our proofs below with some more ingredients from the proof of~\cite[Theorem~2.5]{ichostrava}. 

By transference inequalities, it is possible
to derive from our Theorem~\ref{A} some
information on the Dirichlet spectrum of a linear form in $m$ variables
as investigated in~\cite{beretc, marnat}.
We give details and compare our result 
with~\cite{beretc, marnat} in the Appendix of the paper.

Analogous results of Theorems~\ref{A},~\ref{F} for (a) $p$-adic approximation, 
(b) weighted approximation and/or (c) systems of linear forms, seem in reach by refinements and generalizations of the method presented below. 
Verification of (a), (b), (c) seems increasingly challenging, in particular the impression of the author is that some non-trivial new concepts need to be introduced for (c). The question of a weighted version
for linear forms, i.e. (b) and (c) together, was raised in~\cite[Problem~4.1]{beretc}. 
The author plans subsequent work on these topics.

\section{Proof of Theorem~\ref{F} for $m\geq 3$: Existence claim}

\subsection{Construction of suitable vectors $\ux$}  \label{ko}
Define an increasing sequence of positive integers
recursively as follows: For the initial terms, observe that 
by $(d2)$, there is an integer $H$ so 
that for any $Q\geq H$ we have 
$\Phi(Q) > Q^{-1/(m-1)}$.
Define the initial $m$ terms by $a_j=H^j$ 
for $1\leq j\leq m$.
For $n\geq 1$, having constructed the first $mn$ terms $a_1,\ldots,a_{mn}$, let
the next $m$ terms be given by the recursion
\begin{equation}  \label{eq:h}
a_{mn+1}= a_{mn}^{M_n}
\end{equation}
and
\begin{equation}  \label{eq:tats}
a_{mn+2}= a_{mn+1}^2, \quad a_{mn+3}=a_{mn+1}^3, \quad \ldots, \quad a_{mn+m-1}=a_{mn+1}^{m-1},
\end{equation}
and finally
\[
a_{mn+m}=a_{m(n+1)}= L_n\cdot a_{mn+m-1}\in (a_{mn+m-1}, a_{mn+1}^m]
\]
with integers $M_n\to\infty$ that tend to infinity fast enough, to be made precise below, 
and the integer
$L_n$ defined as 
\[
L_n=  \max\{ z\in\mathbb{N}: a_{mn+1}^{-1} < \Phi(Q),\; 1\leq Q\leq za_{mn+m-1} \}.
\]
By $(d1)$ we have
$\Phi(t)\to 0$ as $t\to\infty$ and hence $L_n$ is well-defined.
In fact $(d1)$ implies
\begin{equation} \label{eq:T}
L_n\leq a_{mn+1}, \qquad\qquad n\geq 1,
\end{equation}
so that indeed $a_{mn+m}\leq a_{mn+1}^m$ for $n\geq 1$.
Conversely,
by assumption $(d2)$ and our choice of initial terms,
we have $L_n\geq 1$ for all $n\geq 1$ and
\begin{equation}  \label{eq:toin}
\lim_{n\to\infty} L_n= \infty.
\end{equation}
In particular 
indeed $(a_j)_{j\geq 1}$ is strictly increasing.
We observe further that
\begin{equation} \label{eq:divid}
a_j|a_{j+1}, \qquad j\geq 1.
\end{equation}
Define the components $\xi_i$
of $\ux$ via
\begin{equation}  \label{eq:sinus}
\xi_i= \sum_{n=0}^{\infty} \frac{1}{a_{mn+i}}, \qquad 1\leq i\leq m,
\end{equation}
that is we sum the reciprocals
over the indices congruent to $i$ modulo $m$.
We claim that 
all assertions $(C1), (C2), (C3)$ of the theorem hold for $\ux=(\xi_1,\ldots,\xi_m)$
if we choose $M_n$ suitably large in each step.
We remark that taking a power of $a_{mn}$ in
\eqref{eq:h} is just for convenience, it suffices to let $a_{mn+1}=M_n^{\prime}a_{mn}$
for large enough $M_n^{\prime}$.  We will assume
the latter occasionally. We conclude this
section with a very elementary
observation to be applied below.

\begin{proposition}  \label{uv}
	Let $u/v\in\mathbb{Q}$ be reduced. Then for any integers 
	$m\geq 2, L\geq 1, M\geq 2$, the fraction
	\[
	\frac{u}{v}+ \frac{1}{(v^{m-1}L)^M}= \frac{uL^Mv^{(m-1)M-1} +1 }{L^Mv^{(m-1)M}}
	\]
	is reduced as well.
\end{proposition}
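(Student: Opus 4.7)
The plan is in two steps: first verify the stated equality of fractions by a routine denominator-clearing computation, then prove the right-hand side is reduced by showing every prime dividing its denominator fails to divide its numerator.

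For the equality, I would put both fractions on the left-hand side over the common denominator $v\cdot(v^{m-1}L)^M=L^M v^{(m-1)M+1}$ to obtain
\[
\frac{u}{v}+\frac{1}{(v^{m-1}L)^M}\;=\;\frac{uL^M v^{(m-1)M}+v}{L^M v^{(m-1)M+1}},
\]
and then cancel a single factor of $v$ from numerator and denominator (permitted since $(m-1)M\geq 1$) to arrive at the displayed form.

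For reducedness, let $p$ be any prime dividing the denominator $L^M v^{(m-1)M}$; then $p\mid L$ or $p\mid v$. In either case I intend to show that $p$ divides the first summand $uL^M v^{(m-1)M-1}$ of the numerator: if $p\mid L$ this is immediate from $M\geq 1$, while if $p\mid v$ it follows from the exponent bound $(m-1)M-1\geq 1$. This last inequality is precisely where the hypotheses $m\geq 2$ and $M\geq 2$ enter, with the extremal case $m=M=2$ yielding exponent exactly $1$. Consequently the numerator reduces to $1$ modulo $p$, so no such $p$ can divide it, and the fraction is in lowest terms.

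The main (and essentially only) obstacle is the small bookkeeping to confirm $(m-1)M-1\geq 1$; beyond that the argument is entirely elementary. It is worth remarking that the hypothesis $\gcd(u,v)=1$ is not actually needed for reducedness of the right-hand side — the $+1$ in the numerator guarantees coprimality with every prime factor of the denominator regardless — but this assumption is natural because it pins down $v$ canonically, and indeed the quantity $v^{m-1}L$ appearing on the left depends on the specific choice of $v$.
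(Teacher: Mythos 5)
Your proof is correct and takes the same approach as the paper, which simply observes that the numerator is congruent to $1$ modulo any prime dividing $Lv$; you have merely spelled out that observation (splitting into $p\mid L$ vs.\ $p\mid v$ and checking $(m-1)M-1\geq 1$). The side remark that $\gcd(u,v)=1$ is not actually needed is accurate and a nice observation.
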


Obviously the numerator is congruent to $1$ modulo any prime
divisor of $Lv$, and the claim follows.

\subsection{Proof of $(C1), (C2), (C3)$}  \label{proof}
Proof of (C3): Take $Q=q=a_{mn}$ for $n$ large. 
By \eqref{eq:divid} all $qa_j^{-1}$ for $j\leq mn$ are integers.
Hence $\Vert q\xi\Vert=\Vert q\xi_1\Vert=a_{mn}(a_{mn+1}^{-1}+a_{m(n+1)+1}^{-1}+\cdots)$. The main contribution clearly comes from
the first term $a_{mn}/a_{mn+1}$, indeed we may estimate
\begin{equation}  \label{eq:oto}
\Vert q\ux\Vert= \Vert q\xi_1\Vert \leq 2\frac{a_{mn}}{a_{mn+1}}= 
2{a_{mn}}^{-M_n+1}= 2Q^{-M_n+1}.
\end{equation}
Since we assume $M_n\to\infty$ it suffices to take $n$ large enough
for given $N$.

Proof of (C1): 
For simplicity write
\[
d_n=a_{mn+1}, \qquad n\geq 1.
\]
Let $Q>1$ be an arbitrary large number and $k$ be the index with
$a_k\leq Q<a_{k+1}$. 
Write $k=mf+g-1$ for integers $f\geq 0$ and $g\in\{1,2,\ldots,m\}$.
We consider three cases.

Case 1: Assume $g=1$. This means $a_{mf}\leq Q<a_{mf+1}$.
Let $q=a_k=a_{mf}\leq Q$. Then very similar as in \eqref{eq:oto} we see
\[
\Vert q\ux\Vert= \Vert q\xi_1\Vert \leq 2\frac{a_{mf}}{a_{mf+1}}= 
2{a_{mf}}^{-M_f+1}= 2q^{-M_f+1}\leq 2Q^{-\frac{M_f-1}{M_f} }=
2Q^{-1+\frac{1}{M_f} }.
\]
Now since we can assume $M_f\geq 3$, from $(d2)$ we easily see that the right hand 
side is less than $\Phi(Q)$ for sufficiently large $Q\geq Q_0$
or equivalently $f\geq f_0$. Thus $\psi_{\ux}(Q)<\Phi(Q)$ for
$Q$ in these intervals.

Case 2: Assume $2\leq g\leq m-1$. Then $Q<a_{k+1}\leq a_{mf+m-1}= d_f^{m-1}$ and $a_{k+1}/a_k=d_f$. 
Let $q=a_{mf+1}=d_f\leq Q$. Isolating the first term
$a_k/a_{k+1}=d_f^{-1}$ in $q\xi_2$, we easily verify
\begin{equation}  \label{eq:et}
\Vert q\ux\Vert= \Vert q\xi_2\Vert \leq \frac{1}{d_f}+ O(d_{f+1}^{-1})\leq  2Q^{-1/(m-1)}< \Phi(Q)
\end{equation}
for $Q\geq Q_0$ or equivalently $f$ large enough by property $(d2)$. 
This again implies $\psi_{\ux}(Q)<\Phi(Q)$ for the 
$Q$ in question.

Case 3: Now assume $g=m$ or equivalently $k\equiv -1\bmod m$. 
By construction of $L_f$ and since $a_{k+1}=a_{m(f+1)}=L_fa_{mf+1}=L_fd_f$, we
have
\[
d_f^{-1} < \Phi(Q) , \qquad 1\leq Q\leq a_{k+1}.
\]
With $q=d_{f}\leq Q$ again, as in \eqref{eq:et} we have
\begin{equation}  \label{eq:te}
\Vert q\ux\Vert= \Vert q\xi_2\Vert \leq \frac{1}{d_f}+ O(d_{f+1}^{-1}).
\end{equation}
Combining, we derive the estimate
\begin{align*}
\Vert q\ux\Vert= \Vert q\xi_{2}\Vert < \Phi(Q)
\end{align*}
as well if we choose $M_{f+1}$ and hence $d_{f+1}=a_{m(f+1)+1}$ in the next step large enough that the error
term in \eqref{eq:te}
is small enough. Again $\psi_{\ux}(Q)<\Phi(Q)$ for
$Q$ in these intervals  follows. Since we covered all large numbers
with our cases, $(C1)$ follows. 

\begin{remark} \label{reee}
	Note that Case 3 is the critical point where 
	we needed the assumption
	$m>2$. Indeed, for $m=2$ we would not
	have $a_{2(f+1)}=a_{2f+2}=a_{2f+1}^2$ but rather $a_{2f+2}=L_fa_{2f+1}$, and
	for $\Phi$ slowly decaying, like $\Phi(t)=ct^{-1/2}$, the outcome 
	$\Vert q\ux\Vert= \Vert q\xi_2\Vert \leq 1/L_f+ O(d_{f+1}^{-1})$ would
	be larger than the bound in \eqref{eq:te}. We further remark that
	in Case 2 we could take $q=a_k$ instead and obtain $\Vert q\ux\Vert= \Vert q\xi_g\Vert\leq d_f^{-1}+O(d_{f+1}^{-1})$. 
\end{remark}

Proof of (C2): For some large integer $f$ let 
\begin{equation} \label{eq:del}
Q= Q_f= a_{m(f+1)}-1.
\end{equation}
Let $1\leq q\leq Q$ be any integer and $s=s(q)$ be the maximum index
with $a_s$ divides $q$, and take $s=0$ and 
let $a_0=1$ if no such $s$ exists. Recall $d_f=a_{mf+1}$.
We claim that
\begin{equation}  \label{eq:claim}
\Vert q\ux\Vert \geq \frac{1}{d_{f}}+ O(Qd_{f+1}^{-1}).
\end{equation}
We again distinguish two cases.

Case I: We have $s\leq mf$, or equivalently $a_s\leq a_{mf}$. Then we claim
\begin{equation} \label{eq:fre}
\Vert q\ux\Vert \geq \Vert q\xi_1\Vert \geq d_f^{-1} + O(Qd_{f+1}^{-1}).
\end{equation}
Write $e_s=e_s(q)= a_{s+1}/a_s\in\mathbb{Z}$ for simplicity,
which is an integer by \eqref{eq:divid}. By assumption we have $q=B\cdot a_{s}$ with some integer $B=B_q$ with $e_s\nmid B$ and $B=q/a_s\leq Q/a_s < a_{k+1}/a_s$. 
We split $q\xi_1= U_q+V_q$ with 
\[
V_q= Ba_s\sum_{j=1}^{f} a_{jm+1}^{-1}= Ba_s\frac{D}{a_{fm+1}}, \qquad U_q=Ba_s\sum_{j=f+1}^{\infty} a_{jm+1}^{-1}.
\]
where
\[
D= a_{fm+1} \sum_{j=1}^{f} a_{jm+1}^{-1} \in\mathbb{Z}
\]
is an integer by \eqref{eq:divid}. 
By Proposition~\ref{uv} applied to
\[
u/v= \sum_{j=1}^{f-1} a_{mj+1}^{-1}, \quad M=M_{f-1}, \quad L=L_{f-1},
\] 
and an inductive argument,
we see that $(D,a_{mf+1})=1$.
Note hereby that in the base case $f=1$ of the induction, the hypothesis is easily checked.
On the other hand,
since $e_s\nmid B$ we have $B/e_s=Ba_s/a_{s+1}$ is not an integer.
Thus also $Ba_s/a_{mf+1}=(B/e_s)\cdot (a_{s+1}/a_{mf+1})\notin\mathbb{Z}$ since by assumption
of Case I we have $mf+1\geq s+1$ and thus $a_{s+1}|a_{mf+1}$ by
\eqref{eq:divid}.
Combining, we see that $V_q\notin \mathbb{Z}$.
Since the denominator in reduced form divides $a_{mf+1}$,
it has distance at least $a_{mf+1}^{-1}=d_f^{-1}$ from any integer.
Finally we can estimate
\[
|U_q| \leq Ba_s \cdot 2a_{m(f+1)+1}^{-1} = 2Qd_{f+1}^{-1},
\]
and the claim \eqref{eq:fre} and thus \eqref{eq:claim} follows.

Case II: Assume $s>mf$. Then by \eqref{eq:del} we may write $s=mf+h-1$ with an integer
$h\in\{ 2,3,\ldots,m\}$.
Case IIa: First assume $h\neq m$.
Write again $e_s= a_{s+1}/a_s\in\mathbb{Z}$ and $q=B\cdot a_{s}$ with some integer $B$ with $e_s\nmid B$ and $B=q/a_s\leq Q/a_s < a_{k+1}/a_s$.
Now observe that by assumption $h\neq m$ from \eqref{eq:tats}
we infer
\begin{equation} \label{eq:has}
e_s=d_f.
\end{equation}
Hence if we split $q\xi_h= Y_q+Z_q$ with 
\[
Y_q= Ba_s\sum_{j=nm+h,\; j\leq s } a_j^{-1}= B\sum_{j=nm+h,\; j\leq s } \frac{a_s}{a_j}\in\mathbb{Z}, \qquad Z_q=Ba_s\sum_{j=nm+h,\; j>s } a_j^{-1}
\]
then $Y_q\in\mathbb{Z}$ by \eqref{eq:divid}, and separating
the first term $Ba_s/a_{s+1}$ from the sum of $Z_q$
we may write 
\[
Z_q= q\xi_h-Y_q= \frac{Ba_s}{a_{s+1}}+ Ba_s\cdot O(a_{s+1+m}^{-1}) = \frac{B}{e_s} + O(Qd_{f+1}^{-1}).
\]
Now again since $e_s\nmid B$, 
the term $B/e_s$ is not an integer
and thus has distance at least
$1/e_s$ from any integer. So indeed by \eqref{eq:has} 
we see
\[
\Vert q\ux\Vert\geq \Vert q\xi_h\Vert= \Vert Z_q\Vert \geq e_s^{-1}-O(Qd_{f+1}^{-1}) = d_f^{-1}-O(Qd_{f+1}^{-1}).
\]

Case IIb: Finally assume $h=m$, or equivalently $s=k$. 
Then note that \eqref{eq:T} implies 
\[
e_s=\frac{a_{s+1}}{a_s}=\frac{a_{mf+m}}{a_{mf+m-1}}=L_f\leq a_{mf+1}= d_f,
\]
hence
$e_s\leq d_f$ again. Hence again the same argument 
as in Case IIa yields
\begin{equation}   \label{eq:JIPPY}
\Vert q\ux\Vert \geq \Vert q\xi_m\Vert \geq \frac{1}{e_s} -  O(Qd_{f+1}^{-1}) \geq \frac{1}{d_f} - O(Qd_{f+1}^{-1}),
\end{equation}
thus again \eqref{eq:claim} holds. The claim is proved in every case.

Now since $Q<a_{mf+m}=a_{mf+1}^m$ and $d_{f+1}=a_{mf+m}^{M_{f+1}}>a_{mf+1}^{M_{f+1}}$,
the remainder term in \eqref{eq:claim} can be bounded via
\begin{equation}  \label{eq:asinus}
Qd_{f+1}^{-1}\leq  a_{mf+m}d_{f+1}^{-1}=
a_{mf+1}^md_{f+1}^{-1}=a_{mf+1}^{m-M_{f+1}}.
\end{equation}
Choosing $M_{f+1}$ sufficiently large in the next step, 
this will be arbitrarily small.
On the other hand, by construction of $L_f$ for some 
$L_fa_{mf+m-1}\leq Q^{\prime}\leq (L_f+1)a_{mf+m-1}$ we have
\[
d_f^{-1} \geq  \Phi(Q^{\prime}),
\]
hence
\begin{equation} \label{eq:kkk}
d_f^{-1} \geq \Phi(Q^{\prime})=
\Phi(\alpha_f\cdot L_{f}a_{mf+m-1}), \qquad\quad 1\leq \alpha_f=
\frac{Q^{\prime}}{L_f a_{mf+m-1}}\leq \frac{L_f+1}{L_f}.
\end{equation}
In view of \eqref{eq:asinus} and \eqref{eq:claim}, again for given
$\epsilon_1>0$ a suitably large choice of $M_{f+1}$
in the next step will ensure that
\[
\Vert q\ux\Vert > (1-\epsilon_1)\Phi(Q^{\prime}),
\]
uniformly in $1\leq q\leq Q$.
Now by \eqref{eq:toin} we have $\alpha_f\to 1^{+}$ as $f\to\infty$,
hence for $\epsilon_2>0$ and $f\geq f_0(\epsilon_2)$ 
by property $(d3)$ we infer
\[
\Vert q\ux\Vert > (1-\epsilon_2)\cdot (1-\epsilon_1)\Phi(L_{f}a_{mf+m-1}).
\]
Since $L_fa_{mf+m-1}/Q=a_{m(f+1)}/Q>1$ is arbitrarily close to $1$ when $f$ is large enough by \eqref{eq:del}, for arbitrarily small $\epsilon_3>0$ again from $(d3)$
we infer
\[
\Vert q\ux\Vert > (1-\epsilon_2)(1-\epsilon_1) \cdot (1-\epsilon_3)\Phi(Q) =
(1-\epsilon_4)\Phi(Q), \quad \epsilon_4=(1-\epsilon_1)(1-\epsilon_2)(1-\epsilon_3).
\]
Since $q\leq Q$ was arbitrary,
this means $\psi_{\ux}(Q)>(1-\epsilon_4)Q$.
We may make $\epsilon_4$ arbitrarily small by choosing 
$f$ large enough and consequently $\epsilon_1, \epsilon_2, \epsilon_3$ small enough.
Now we may let the according $\epsilon_4=\epsilon_4(n)$ of the $n$-th step of the construction tend to $0$ 
as $n\to\infty$, and claim $(C2)$
follows for the induced $\ux$.

Since we can choose infinitely many distinct $M_n$ and thus
$a_{mn+1}$ in each step
of the construction in \S~\ref{ko},
our method gives rise to a continuum
of $\ux$ with the properties of the theorem. We prove the
stronger metrical assertion in~\S~\ref{metu} below.

\subsection{Proof of Corollary~\ref{rocky} }  \label{crproof}

Choose $\ux$ as in~\S~\ref{ko} for $\Phi(t)=ct^{-1/m}$, $c\in(0,1)$.
Let $\Omega:= \max_{1\leq i\leq m} |\underline{e}_i|$.
By relabelling indices if necessary,
we may assume $|\underline{e}_2|= \min_{1\leq i\leq m} |\underline{e}_i|=\chi$.
We show $\ux$
has Dirichlet constant $\Theta^{|.|}(\ux)=c\chi$ with respect to
$|.|$.
%
For simplicty write $\eta_i=\eta_i(q):= 
\Vert q\xi_i\Vert>0, 1\leq i\leq m$, so that 
$| \Vert q\ux\Vert |=|\sum \eta_i \underline{e}_i|$.
The proof of $(C1)$ 
in Case 1 with $Q=q=a_{mf}$ yields
again negligibly small values by
\[
| \eta_i \underline{e}_i |=\eta_i\cdot |\underline{e}_i|\leq \eta_i\cdot \Omega <q^{-1+\frac{1}{M_n}}\Omega=o(Q^{-1/m}),\qquad 1\leq i\leq m,
\]
independent of the norm. In Cases 2 and 3, for $Q=q=a_{mf+1}$ we see that \[
|\eta_2 \underline{e}_2|=(d_f^{-1}+o(d_f^{-1}))|\underline{e}_2|
=cQ^{-1/m}(1+o(1))\chi
\]  
whereas for $i\neq 2$ we get
\[
|\eta_i \underline{e}_i|\leq d_f^{-2}(1+o(1))|\underline{e}_i|\leq d_f^{-2}(1+o(1))\Omega=o(d_f^{-1})=o(Q^{-1/m}).
\]
Since $|\Vert q\ux\Vert| \leq \sum |\eta_i \underline{e}_i|$ by
triangle inequality,
we conclude in all cases $\Theta^{|.|}(\ux)\leq c\chi$. 
The reverse inequality $\Theta^{|.|}(\ux)\geq c\chi$ follows from
\[
|\Vert q\ux\Vert| \geq \max_{1\leq i\leq m} | \eta_i\underline{e}_i|\geq
\max_{1\leq i\leq m} \eta_i \min_{1\leq i\leq m} |\underline{e}_i|>c(1-\varepsilon)Q^{-1/m}\min_{1\leq i\leq m} |\underline{e}_i|= c(1-\varepsilon)\chi Q^{-1/m}
\]
for any $q<Q$,
where we used that $|.|$ is expanding and property $(C2)$.

\subsection{On relaxing conditions $(d2), (d3)$}   \label{nee}

We believe that for $m\geq 3$, the order in $(d2)$
can be significantly relaxed. Our proof above
followed the main outline from the
proof of~\cite[Theorem~2.5]{ichostrava} with
the special choice $\eta_1=\eta_2=\cdots=\eta_m=1/m$ (letter
$k$ was used in place of $m$ in~\cite{ichostrava}). 
Choosing $\eta_i$ differently, which essentially
means altering \eqref{eq:tats}, depending on a rough given
decay rate of $\Phi$, a similar approach may
ideally allow for replacing $(d2)$ by the weaker,
natural condition that $t\Phi(t)\to \infty$ (which coincides with
the exact condition $(d2)$ when $m=2$). However, some technical
obstacles have to be mastered.
Recall that for any 
$\ux\in \Rm\setminus \mathbb{Q}^m$ we have $\limsup_{Q\to\infty} Q\psi_{\ux}(Q)\geq 1/2$, as mentioned in \S~\ref{s1.1}.

Similarly, we believe that $(d3)$ can be dropped. Notice that we
have a free choice of $M_f$ in every step, so it suffices to
find some $M_f$ for which for given $\epsilon>0$ the induced $L_f$ satisfies $\Phi((L_f+1)a_{mf+1})>(1-\epsilon)\Phi(L_f a_{mf+1})$.
Now given $\epsilon>0$, for small enough $\delta=\delta(m,\epsilon)>1$ there are arbitrarily large $T$
so that $\Phi(\delta T)>(1-\epsilon)\Phi(T)$, otherwise it is easy to see that $(d2)$ cannot hold.
It remains however unclear to us if we can choose $T$ of the given form
$L_f a_{mf+1}$. In this matter it may be helpful that, as remarked in \S~\ref{ko}, we can relax \eqref{eq:h} by asking
$a_{mf+1}=M_n^{\prime} a_{mf}$ for large enough $M_n^{\prime}$.
%
%

\section{Proof of Theorem~\ref{F} for $m\geq 3$: metrical claim} \label{metu} 

\subsection{Special case $a_n=2^{c_n}$ }
\label{met}

Assume for simplicity first the $a_n$ constructed in~\S~\ref{ko}
are of the form $a_n=2^{c_n}$ for every $n$ with an increasing
sequence $c_n$, so that the binary expansion of the $\xi_i$ becomes
\[
\xi_i= \sum_{n=0}^{\infty} 2^{-c_{mn+i}}, \qquad 1\leq i\leq m.
\]
The proof is done in two steps.
The first key observation is that for given $\Phi$, we have some freedom in the construction of $\ux$ in~\S~\ref{ko}. We will find a Cantor type set consisting of $\uz\in\Rm$
sharing the same properties.
Then in the second step we use a very similar strategy as in the proof of~\cite[Theorem~2.1]{ichneu} based on a result of Tricot~\cite{tricot} to find the claimed lower bound for the packing
dimension of this set.

\underline{Step 1}: 
It is clear that
the binary digits of all $\xi_i$ as above at positions
$c_{mn}+1, c_{mn}+2, \ldots, c_{mn+1}-1$ are $0$ (whereas 
$\xi_m$ resp. $\xi_1$ has digit
$1$ at $c_{mn}$ resp. $c_{mn+1}$). Let small $\epsilon>0$ be given 
and $\gamma\in (0,1)$ as in the theorem.
Let $\mathcal{S}=\mathcal{S}(\gamma)\subseteq [0,1)^m$ be the set
of real vectors $\uz= (\zeta_1,\ldots,\zeta_n)$ whose coordinates have binary expansion $\zeta_i=\sum_{j\geq 1} g_{i,j}2^{-j}$
derived from the binary expansion of $\xi_i$, for $1\leq i\leq m$, by altering its digit from $0$ to an arbitrary digit 
$g_{i,j}\in\{0,1\}$ 
at places $j$ in the intervals
\[
I_n=\{ \lfloor c_{mn+1}(\gamma+\epsilon)\rfloor+1, \lfloor c_{mn+1}(\gamma+\epsilon)\rfloor+2,\ldots,
c_{mn+1}-1\}, \qquad n\geq 0.
\]
Note that still $g_{i,j}=0$ at places $j\in [c_{mn}+1, \lfloor c_{mn+1}(\gamma+\epsilon)\rfloor] \cap \mathbb{Z}$ for $1\leq i\leq m$, $n\geq 1$.

Then for any $\uz\in \mathcal{S}$,
the proof of $(C2)$ is analogous, upon some twists that we explain now. 
Since we include
certain terms $2^{-j}$ for $j \in\cup I_n$ to the partial sums
defining the $\zeta_i$, we need to slightly redefine
the integer $D=D(\uz)$ in Case I 
resp. $Y_q=Y_q(\uz)$ and $Z=Z_q(\uz)$ in Case II, 
depending on the choice of $\uz$. 
The coprimality condition in Case I is further 
guaranteed since at position $j=c_{mf+1}$
the digit of any $\zeta_1$ still equals $g_{1,c_{mf+1}}=1$
since $c_{mf+1}\notin \cup I_n$, so 
any $D(\uz)$ is odd but $a_{mf+1}=2^{c_{mf+1}}$
is a power of $2$. Similarly, the first, main term of $Z_q$ 
obtained from truncating the binary expansion of $\zeta_h$ after 
position $j=c_{s+1}$, is still
bounded from below by $d_f^{-1}$. Indeed, it may be written $qr_s=Ba_s\cdot r_s$ with the rational numbers 
$r_s= r_s(\zeta_h):= \sum_{j=c_s}^{c_{s+1}} g_{h,j}(\zeta_h)2^{-j}$ where $g_{h,j}(\zeta_h)$
is the binary digit of $\zeta_h$ at position $j$, depending
on the choice of $\uz$. But we have $g_{h,c_{s+1}}(\zeta_h)=g_{h,c_{mf+h}}(\zeta_h)=1$ for any $\uz\in \mathcal{S}$, since by $c_{mf+h}\notin \cup I_n$ we have not changed the digit of $\xi_h$ there.
So $r_s=v_s/2^{c_{s+1}}=v_s/a_{s+1}$ with some odd integer numerator $v_s$.
Finally, since $v_s$ is odd and $B\nmid (a_{s+1}/a_{s})$ as well by assumption, 
$qr_s=Bv_{s}a_s/a_{s+1}$ is not an integer and thus has distance at least $2^{-(c_{s+1}-c_s)}=a_s/a_{s+1}=d_f^{-1}$ from any integer. 

Moreover, we have chosen the left interval endpoints of the $I_n$ 
large enough that
we also satisfy $(C3)$ and Case 1 of $(C1)$ with the same
choice $q=a_{mn}=2^{c_{mn}}$, for any $\uz\in \mathcal{S}$. Indeed, the latter can be verified via
\[
\Vert q\uz\Vert \ll 2^{-(\gamma+\epsilon)c_{mn+1} + c_{mn} } < (2^{c_{mn+1}})^{-\gamma}= a_{mn+1}^{-\gamma}<Q^{-\gamma}< \Phi(Q), \quad Q<a_{mn+1},
\]
where we used $c_{mn+1}/c_{mn}\to \infty$
as $n\to\infty$, which we may assume
(since $c_{j}=\log_2 a_{j}$ this is a little stronger than
our original assumption $M_n^{\prime}= a_{mn+1}/a_{mn}\to \infty$),
and property $(d4(\gamma))$.
The proofs of the remaining
cases of $(C1)$ for any $\uz\in \mathcal{S}$ are again unchanged. Hence all $\uz\in \mathcal{S}$ satisfy
the claims of Theorem~\ref{F}.

\underline{Step 2}:
Now we show that $\mathcal{S}=\mathcal{S}(\gamma)$ 
has packing 
dimension at least $m(1-\gamma)$. This works similar as 
in the proof of~\cite[Theorem~2.1]{ichneu}. 
Let $\mu=\gamma^{-1}-1$.
First we claim that
we can write any given real vector $\underline{y}\in\Rm$ as the sum of an element of $\mathcal{S}$
and a vector with ordinary exponent of binary approximation 
(to be defined below) at least $\mu-\varepsilon$, for small $\varepsilon>0$ that
tends to $0$ as $\epsilon$ does.
More precisely, if we let
\[
\mathcal{V}_{m}^{(2)}(\lambda)=\{ \underline{x}\in\Rm: \liminf_{t\to\infty}\; (2^t)^{\lambda} \psi_{\underline{x}}(2^t)<\infty \}\subseteq 
\bigcup_{\tau \geq \lambda } \mathcal{W}_m(\tau), \qquad \lambda>0, 
\] 
then for some small $\varepsilon>0$ we claim
\begin{equation} \label{eq:OPE}
\mathcal{V}_{m}^{(2)}(\mu-\varepsilon)+ \mathcal{S}= \Rm.
\end{equation}
Given $\underline{y}\in\Rm$, we construct a representation $\underline{x}+\uz=\underline{y}$
for $\underline{x}\in \mathcal{V}_{m}^{(2)}(\mu-\varepsilon), \uz\in \mathcal{S}$.
We define $\uz\in\Rm$ as any
vector in $\mathcal{S}$ whose coordinates 
$\zeta_i$, $1\leq i\leq m$, have the same binary digit as the corresponding
component $y_i$ of $\underline{y}$ within the intervals $I_n$.
This is possible by the construction of $\mathcal{S}$. Then we let
$\underline{x}= \underline{y}-\uz$.
Consequently the coordinates of
$\underline{x}\in\Rm$ all have binary digits
$0$ in the entire intervals $I_n$.
Hence by construction of $I_n$, it is easy to check that
$\underline{x}\in\mathcal{V}_{m}^{(2)}(\mu-\varepsilon)$ for small enough $\varepsilon>0$. Indeed, by taking $q=2^{\lfloor c_{mn+1}(\gamma+\epsilon)\rfloor}$ and our choice of $\mu$, for small enough $\varepsilon>0$ depending on $\epsilon$ we have
\[
\Vert q\underline{x}\Vert\ll 2^{-|I_n|}\ll 2^{-(1-\gamma-\epsilon)c_{mn+1}}
\ll q^{-(\mu-\varepsilon)}.
\]
The construction is complete and
the claim is proved.

Write $\dim_H$ and $\dim_P$ for Hausdorff and packing dimension,
respectively. For simplicity set $\mathcal{V}= \mathcal{V}_{m}^{(2)}(\mu-\varepsilon)$.
Next we claim that
\begin{equation} \label{eq:RAT}
\dim_H(\mathcal{V})\leq \frac{m}{\mu-\varepsilon+1}= m\gamma+O(m\varepsilon),
\end{equation}
where the implied constant depends on $\gamma$ only.
This can be done by a standard covering argument and was
already observed in a more general form in~\cite[Lemma~5.6]{ichneu}.
See next paragraph for an alternative proof using Lemma~\ref{rem}.
Combining \eqref{eq:OPE}, \eqref{eq:RAT} with
a result by Tricot~\cite{tricot} and the well-known property
that the Hausdorff dimension of a set does not increase under a Lipschitz map, we conclude
\begin{align*}
\dim_P(\mathcal{S})\geq 
\dim_H(\mathcal{S}\times \mathcal{V})- \dim_H(\mathcal{V})
\geq \dim_H(\mathcal{S}+\mathcal{V})- \dim_H(\mathcal{V})
\geq m-m\gamma-O(m\varepsilon),
\end{align*}
and since $\epsilon$ and thus $\varepsilon$ can be arbitrarily small, 
the claim of the theorem.

\begin{remark}  \label{rehmar}
	In the light of the short proof of Corollary~\ref{ok} 
	in~\S~\ref{8} below,
	the above already implies the weaker claim that $Di_m(c)\setminus (Di_m(2^{-1/m}c) \cup Bad_m)\subseteq \boldsymbol{FS}_m$ has packing dimension at least $m-m\gamma$ for any $c\in(0,1]$. More generally, for $\Phi$
	as in Theorem~\ref{F}, analogous claims hold upon
	replacing the factor $(1-\varepsilon)$ in $(C2)$ by $2^{-1/m}$.
\end{remark}

\subsection{General case}  \label{tem}
Unfortunately, as indicated in Remark~\ref{rehmar},
we cannot guarantee that $a_n$ are not powers of $2$ 
in general, without losing information on the exact Dirichlet constant.
Here we explain how to alter the construction to the general case,
however omit a few technical details. Given any integer sequence $k_j$ 
satisfying \eqref{eq:divid}, i.e. $k_j|k_{j+1}$, it is not hard to verify that
any number in $[0,1)$ can be expressed
as $\sum_{j\geq 1} g_j/k_j$ with integers $g_j\in \{ 0,1,2,\ldots,k_{j+1}/k_j-1 \}$, via some kind of greedy expansion.
Call $k_j$ bases and $g_j$ digits.
If $k_{j+1}/k_j=b$ for $j\geq 1$ then this becomes 
just the usual $b$-ary expansion.
Now we choose $a_{mn+1}=2^{Z} a_{mn}$ for some integer $Z=Z_n$ so that
$M_n^{\prime}=2^{Z_n}$ in the notation of~\S~\ref{ko}, and apply
the above construction to the digits $g_{i,j}$ of $\xi_i$ 
in this setting, in place of the binary digits.
Concretely, we choose any $a_j$ as a base integer and call these main bases. At every main base integer $k_j=a_{u}$, for $i\equiv u \bmod m$
with $i\in \{1,2,\ldots,m\}$ 
we choose the digit of $\xi_i$ as $g_{i,j}=1$, and put $g_{i,j}=0$
for the other $i\not\equiv u\bmod m$, very similar to the binary digit construction above. We further define additional
intermediate bases within intervals $(a_{mn}, a_{mn+1})$ as follows.
Let $Y=Y_n<Z_n$ be an integer so that $2^{Y}a_{mn}=a_{mn+1}^{\gamma+\epsilon}$ with small
$\epsilon>0$, which is clearly possible since the approximation 
can be made precise up to a factor $2$. Now within
any interval $(a_{mn},a_{mn+1})$, we choose the intermediate bases
$2^{Y}a_{mn}, 2^{Y+1}a_{mn}, 2^{Y+2}a_{mn}, \ldots, 2^{Z-1}a_{mn}=a_{mn+1}/2$.
We define our sequence of bases $(k_j)_{j\geq 1}$
as the increasingly ordered union of all main and intermediate bases.
By a similar argument as in the binary construction~\S~\ref{met}, again we can choose 
the digits $g_{i,j}\in \{0,1\}$ of any component $\zeta_i$ with respect to the intermediate bases $2^{Y}a_{mn}, 2^{Y+1}a_{mn}, \ldots, 2^{Z-1}a_{mn}$ freely without violating
$(C1), (C2), (C3)$. Call $\mathcal{S}^{\ast}\subseteq \Rm$ the according set of real vectors $\uz$ induced by the above digit restrictions.
Since the good approximations are not powers of $2$, we need a slightly different argument than in~\S~\ref{met} for the optimal result.

\begin{lemma} \label{rem}
	Let $\boldsymbol{e}=(e_j)_{j\geq 1}$ be any strictly increasing sequence of positive integers (that thus tends to infinity) and $\tau>0$. Then the set 
	$\mathcal{V}_{m,\boldsymbol{e} }(\tau)$ of vectors $\underline{x}\in\Rm$ satisfying
	\begin{equation}  \label{eq:estili}
	\Vert e_j\underline{x}\Vert \leq e_j^{-\tau}, \qquad j\geq 1
	\end{equation}
	has Hausdorff dimension at most $m/(\tau+1)$.
\end{lemma}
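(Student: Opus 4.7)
The plan is a standard Jarn\'ik--Besicovitch style covering argument, essentially packaging the volume/count balance for each index $j$ separately. First I would reduce to the unit cube: since $\Vert e_j(\underline{x}+\underline{n})\Vert=\Vert e_j\underline{x}\Vert$ for every $\underline{n}\in\mathbb{Z}^m$, the set $\mathcal{V}_{m,\boldsymbol{e}}(\tau)$ is $\mathbb{Z}^m$-periodic, so its Hausdorff dimension coincides with that of $\mathcal{V}_{m,\boldsymbol{e}}(\tau)\cap[0,1]^m$ (by countable stability of $\dim_H$ under the translate decomposition of $\Rm$).

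Next, for each fixed $j$ I would bound the cover combinatorially. The condition $\Vert e_j x_i\Vert\leq e_j^{-\tau}$ confines each coordinate $x_i\in[0,1]$ to the $e_j^{-\tau-1}$-neighbourhood of some fraction $p/e_j$, hence to at most $e_j+1$ intervals of length $2e_j^{-\tau-1}$. Taking the Cartesian product across the $m$ coordinates, $\mathcal{V}_{m,\boldsymbol{e}}(\tau)\cap[0,1]^m$ is covered by at most $(e_j+1)^m$ axis-parallel boxes, each of diameter at most $2\sqrt{m}\,e_j^{-\tau-1}$.

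Now for any $s>m/(\tau+1)$ I would estimate the associated $s$-dimensional Hausdorff pre-measure. Since $e_j\to\infty$, the covering scale $2\sqrt{m}\,e_j^{-\tau-1}$ falls below any prescribed $\delta>0$ once $j$ is large enough, so the above is an admissible $\delta$-cover. Its $s$-sum is bounded by
\[
(e_j+1)^m\cdot(2\sqrt{m}\,e_j^{-\tau-1})^s \;\ll\; e_j^{\,m-s(\tau+1)},
\]
and the exponent $m-s(\tau+1)$ is strictly negative by the choice of $s$. Letting $j\to\infty$ yields $\mathcal{H}^s\bigl(\mathcal{V}_{m,\boldsymbol{e}}(\tau)\cap[0,1]^m\bigr)=0$, so $\dim_H(\mathcal{V}_{m,\boldsymbol{e}}(\tau))\leq s$, and taking $s\downarrow m/(\tau+1)$ gives the stated bound.

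There is no genuine obstacle here: the argument is elementary and matches the upper bound half of Jarn\'ik--Besicovitch. The only point worth emphasising is that one single index $j$ already suffices to produce a cover at arbitrarily fine scale, so one need not intersect finitely many $j$-conditions or invoke any independence; the lone size-count trade-off at each level does the whole job. A minor stylistic choice is whether to work directly with $\underline{x}\in\Rm$ or project to the torus $\mathbb{R}^m/\mathbb{Z}^m$; I would keep the cube formulation for transparency.
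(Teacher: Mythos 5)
Your proof is correct, but it takes a genuinely different and more elementary route than the paper. The paper invokes the convergence case of the Jarn\'ik--Besicovitch theorem: it observes $\mathcal{V}_{m,\boldsymbol{e}}(\tau)$ is contained in the limsup set associated with any subsequence $(e_{j_k})$, constructs an approximation function $\Psi$ supported only on a sparse enough subsequence so that $\sum_k e_{j_k}^{\,m-\nu(\tau+1)}$ converges whenever $\nu>m/(\tau+1)$, and concludes the Hausdorff $\nu$-measure is zero. Your argument sidesteps Jarn\'ik--Besicovitch entirely: since the hypothesis $\Vert e_j\underline{x}\Vert\leq e_j^{-\tau}$ holds for \emph{every} $j$ (not merely infinitely many), a single index $j$ already supplies a $\delta$-cover by $(e_j+1)^m$ boxes of diameter $\asymp e_j^{-\tau-1}$, and letting $j\to\infty$ drives the $s$-presum $e_j^{\,m-s(\tau+1)}$ to zero for any $s>m/(\tau+1)$. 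What the paper's route buys is that it plugs into a standard cited theorem without rederiving the covering estimate; what yours buys is a short, self-contained argument with no need to select a sparse subsequence or appeal to external machinery, and it makes transparent exactly why the one-scale count-versus-size trade-off is all that is required. Both give the same bound $m/(\tau+1)$, and neither uses monotonicity of the sequence beyond $e_j\to\infty$.
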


\begin{proof}
The set $\mathcal{V}_{m,\boldsymbol{e} }(\tau)$ is clearly contained in the set of $\underline{x}$ for which estimate \eqref{eq:estili} has infinitely many solutions for any given subsequence $(e_{j_{k}})_{k\geq 1}$ of the $e_j$. By
the convergence case of Jarn\'ik-Besicovich Theorem~\cite{jarnik} 
in a setup involving approximations
functions (no monotonicity is required since $m>1$), choosing the approximation function $\Psi$ with support only on the $e_{j_{k}}$
and there equal to $\Psi(e_{j_{k}})=e_{j_{k}}^{-\tau}$,
the latter set has Hausdorff $\nu$-measure $0$ if the 
sum of $e_{j_{k}}^{m-\nu(\tau+1)}$ over $k\geq 1$ converges. As soon as $\nu>m/(\tau+1)$, by choosing a sparse enough subsequence $e_{j_{k}}$ of the $e_j$, the criterion obviously holds. This means the Hausdorff dimension the latter set is at most $m/(\tau+1)$, thus of
our original set $\mathcal{V}_{m,\boldsymbol{e} }(\tau)$ as well. 
\end{proof}

By a similar argument as in~\S~\ref{met}, 
we can write given $\underline{y}\in\Rm$ as a
sum $\underline{x}+ \uz$ where $\uz\in \mathcal{S}^{\ast}$ and $\underline{x}$ 
has the property that $\Vert q\underline{x}\Vert\ll q^{-(\mu-\varepsilon)}$ at the places $q=2^{Y_n}a_{mn}$ for 
$n\geq 1$. 
So we may apply Lemma~\ref{rem} to $e_j=2^{Y_j}a_{mj}$ and $\tau=\mu-\varepsilon$, which shows that the according set $\mathcal{V}_{m,\boldsymbol{e} }(\tau)$ 
of $\underline{x}$ has 
Hausdorff dimension at most $m/(\mu-\varepsilon+1)$ again.
The estimate 
for the packing dimension of $\mathcal{S}^{\ast}$
follows now analogously to~\S~\ref{met} from Tricot's result. 

\begin{remark}
It can be shown with aid of~\cite[Example~4.6, 4.7]{falconer}
that Lemma~\ref{rem} states the precise Hausdorff dimension
of $\mathcal{V}_{m,\boldsymbol{e} }(\tau)$, hence we cannot hope for an improvement by some refined treatment of this set.
The proofs of Theorems~\ref{hdd},~\ref{beides} below are based on this strategy.
\end{remark}

\section{Proof of Theorem~\ref{F} for $m=2$}  \label{ss}

For $m=2$, we have to alter our sequence $(a_n)_{n\geq 1}$.
Define $a_1$ and $a_2=a_1^2$, and for $n\geq 1$ recursively we set
\[
a_{2n+1}=a_{2n}^{M_n}, \qquad a_{2n+2}=\tilde{L}_na_{2n+1}
\]
where again $M_n$ is a fast growing sequence of integers, but now
\[
\tilde{L}_n= \{ \min z\in\mathbb{N}: z^{-1} < \Phi(Q): 1\leq Q\leq za_{2n+1} \}.
\]
By $(d2)$ this is a well-defined finite number.
Note that this is slightly different than $L_f$ in~\S~\ref{ko}.
Indeed,
now the reverse inequality $\tilde{L}_n>d_n:=a_{2n+1}$ 
holds by $(d1)$ for all $n\geq 1$, and 
clearly $\tilde{L}_n\to\infty$ follows. Then again define
$\xi_i$ as in \eqref{eq:sinus}.

The proof of $(C3)$ is identical to \S~\ref{ko} by considering
$Q=q=a_{2n}$ for large $n$. For $(C1)$
we again let $Q$ be large and $k$ the index
with $a_k\leq Q<a_{k+1}$. We consider the same cases again.
Case 1 is very similarly inferred from $(d2)$, and again 
leads to $\Vert q\ux\Vert= \Vert q\xi_1\Vert <  \Phi(Q)$. 
Case 2 of $(C1)$ is empty.
In Case 3 of $(C1)$, we now instead 
of \eqref{eq:et} get a bound
\begin{align}  \label{eq:EE}
\Vert q\ux\Vert= \Vert q\xi_2\Vert &\leq \frac{1}{ \tilde{L}_f }+ O(Qd_{f+1}^{-1}).
\end{align}
By definition of $\tilde{L}_f$ and since we can choose $M_{f+1}$ in the next step arbitrarily large, we again see $\Vert q\ux\Vert= \Vert q\xi_2\Vert < \Phi(Q)$ for any $Q<a_{2n+2}$.

We follow the proof of $(C2)$ as in \S~\ref{proof}.
Again we let $Q=a_{2f+2}-1$ for large $f$ and consider the 
same cases I, II.
In Case I we get the same estimate \eqref{eq:fre} by the same argument.
However, since now $\tilde{L}_f>d_f$ we have $d_f^{-1}>\tilde{L}_f^{-1}$.
In case IIa we again get a lower bound 
$\Vert q\ux\Vert \geq e_s^{-1}-O(Qd_{f+1}^{-1})$ but now $e_s=\tilde{L}_f$. Case IIb also gives the same bound $e_s^{-1}-O(Qd_{f+1}^{-1})$.
Since we noticed $1/\tilde{L}_f<1/d_f$, in any case we get
\begin{equation} \label{eq:FF}
\Vert q\ux\Vert \geq \frac{1}{ \tilde{L}_{f}}+ O(Qd_{f+1}^{-1}).
\end{equation}
The error term can be made $o(\tilde{L}_f^{-1})$
if we choose $M_{f+1}$ large enough in every step. Moreover,
$(\tilde{L}_f-1)^{-1}\geq \Phi(Q^{\prime})$ for some
$Q^{\prime}\in [ (\tilde{L}_f-1)a_{2f+1}, a_{2f+2})$ by definition of $\tilde{L}_f$, hence
\[
\Vert q\ux\Vert \geq (1-\epsilon_1)\Phi(Q^{\prime})=(1-\epsilon_1)\Phi(\tilde{\alpha}_f \tilde{L}_fa_{2f+1}) , \qquad 
\tilde{\alpha}_f=\frac{ Q^{\prime} }{ \tilde{L}_fa_{2f+1 }}\geq \frac{\tilde{L}_f-1 }{ \tilde{L}_f }.
\]
Now $\tilde{\alpha}_f\to 1^{-}$, so
we can use the latter condition in $(d3)$ to conclude very similarly as in \S~\ref{proof} (note that we again require the condition on the right-sided lower limit $\alpha\to1^{+}$ as well
for the final step of the argument, or some similar property). 
The metrical claim also follows analogously to~\S~\ref{met}.
We omit the details.

\begin{remark}  \label{hr}
	For general $m\geq 2$, defining again $a_{mn+1}=a_{mn}^{M_n}$
	and constant quotients $a_{mn+j+1}/a_{mn+j}=\tilde{L}_n$ for $1\leq j\leq m-1$ with
	\[
	\tilde{L}_n= \{ \min z\in\mathbb{N}: z^{-1} < \Phi(Q): 1\leq Q\leq z^{m-1}a_{2n+1} \},
	\]
	analogous arguments would lead to an alternative, slightly shorter proof of
	Theorem~\ref{F} upon assuming the latter condition in $(d3)$.  
	We preferred to include the longer proof for $m\geq 3$ 
	since when adapting the alternative above construction to Cantor sets
	as in~\S~\ref{fra}, without additional argument the 
	bound in Theorem~\ref{2} would become weaker.
\end{remark}

\section{Proof of Hausdorff dimension estimates}

\subsection{Metric preliminaries}  \label{putz}

To prove Theorem~\ref{hdd} resp. Theorem~\ref{beides},
similar as in~\S~\ref{metu}, 
in short, we construct a Cantor type subset of the 
corresponding set $\textbf{F}_{m,c}$ resp. $\textbf{F}_{m,c}\cap \mathcal{W}_m(\lambda)$ whose Hausdorff dimension can be estimated/evaluated.
The fractal set will be as in the following
lemma, for optimized parameters $\gamma_1, \gamma_2$ under certain
side conditions. The notation $A\asymp B$ means $A\ll B\ll A$ in the sequel.

\begin{lemma}  \label{ohlele}
	Let $m\geq 2$ an integer and $\gamma_2\geq \gamma_1>1$ be real numbers. Assume $(c_n)_{n\geq 1}$ 
	is an increasing sequence of integers and let $h_n=c_{mn}$
	and $H_n=2^{h_n}$ for $n\geq 1$. Assume
	\begin{equation}  \label{eq:SG}
	H_{n}\asymp H_{n-1}^{\gamma_2 m}, \qquad n\geq 2.
	\end{equation}
	Let a sequence $(\delta_n)_{n\geq 1}$ satisfy
	$\delta_n>\gamma_1$ for $n\geq 1$ and $\delta_n= \gamma_2+o(1)$ 
	as $n\to\infty$. For $1\leq i\leq m$, let
	$\mathcal{Q}_i\subseteq [0,1)$ be the set of real numbers $\zeta_i$ 
	whose binary expansion $\zeta_i= \sum_{j\geq 1} g_{i,j}2^{-j}$
	has an arbitrary digit $g_{i,j}\in\{ 0,1 \}$ at places of the form 
	\begin{enumerate}
		\item[(i)] For $1\leq i\leq m$ in intervals $j\in [\gamma_1 h_{n},\delta_n h_{n}-1]\cap \mathbb{Z}$
		\vspace{0.1cm}
		\item[(ii)] For $3\leq i\leq m-1$, in intervals $j\in [2\delta_n h_{n}+1,i\delta_n h_{n}-1]\cap \mathbb{Z}$
		\vspace{0.1cm}
		\item[(iii)] For $i=m$, in intervals
		$j\in [2\delta_n h_n+1, h_{n+1}-1]\cap \mathbb{Z}$ 
	\end{enumerate}
	for all $n\geq 1$,
	and a prescribed digit $g_{i,j}\in\{ 0,1 \}$ elsewhere.
	Then their Cartesian
	product $\prod \mathcal{Q}_i$ has Hausdorff dimension at least 
	\begin{equation}  \label{eq:01}
	\dim_H(\prod_{i=1}^{m} \mathcal{Q}_i) \geq 2\frac{ \gamma_2-\gamma_1 }{\gamma_1(\gamma_2m-1) } + \sum_{i=3}^{m} \min\left\{ \frac{ m(\gamma_2-\gamma_1)+i-2 }{ 2(m\gamma_2-1)
	} , \frac{ (i-1)\gamma_2 - \gamma_1 }{  \gamma_1(m\gamma_2-1) }   \right\}, 
	\end{equation}
	and alternatively
	\begin{equation} \label{eq:02} 
	\dim_H(\prod_{i=1}^{m} \mathcal{Q}_i)\geq m\cdot \frac{ \gamma_2-\gamma_1 }{\gamma_1(\gamma_2m-1) }.
	\end{equation}
\end{lemma}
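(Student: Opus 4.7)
The strategy is to apply the mass distribution principle to the natural product measure on $\prod_{i=1}^m \mathcal{Q}_i$. On each $\mathcal{Q}_i$ put the probability measure $\mu_i$ making every free binary digit i.i.d.\ uniform on $\{0,1\}$, and set $\mu=\mu_1\otimes\cdots\otimes\mu_m$. Writing $F_i(t)$ for the number of free binary digit positions of $\zeta_i$ in $[1,t]$, a dyadic $\ell^\infty$-cube of side $2^{-t}$ centered at a point of $\prod \mathcal{Q}_i$ has $\mu$-mass at most $2^{-\sum_i F_i(t)}$, and any Euclidean ball of radius $2^{-t}$ is covered by $O(1)$ such cubes. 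The mass distribution principle then yields
\[
\dim_H\!\Bigl(\prod_{i=1}^m \mathcal{Q}_i\Bigr) \;\geq\; \liminf_{t\to\infty}\frac{\sum_{i=1}^m F_i(t)}{t} \;\geq\; \sum_{i=1}^m \liminf_{t\to\infty}\frac{F_i(t)}{t}
\]
by superadditivity of $\liminf$ on finite sums, so it suffices to lower bound each per-coordinate $\liminf$.

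By \eqref{eq:SG}, $h_{k+1}=m\gamma_2 h_k(1+o(1))$, and the geometric series yields $\sum_{k=1}^{n-1} h_k \sim h_n/(m\gamma_2-1)$. Block $k$ contributes to $\zeta_i$ asymptotically $c_i h_k$ free digits, with $c_1=c_2=\gamma_2-\gamma_1$ and $c_i=(i-1)\gamma_2-\gamma_1$ for $3\le i\le m$ (the coordinate $\zeta_m$ gathering the entire tail $[2\delta_k h_k+1, h_{k+1}-1]$ via type (iii)); summing gives $F_i(h_n)\sim c_i h_n/(m\gamma_2-1)$.

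Next I locate the critical scales within a block $[h_n,h_{n+1}]$. Since $F_i$ is a step function jumping only at endpoints of the prescribed free intervals, and since on plateaus $F_i(t)/t$ is strictly decreasing while inside a free stretch it is strictly increasing (both consequences of $F_i(t)<t$), the minimum of $F_i(t)/t$ over the block is attained at $t=\gamma_1 h_n$ (opening of the type-(i) interval) or, for $i\geq 3$, at $t=2\delta_n h_n$ (opening of type (ii) or (iii)). Using $\delta_n\to\gamma_2$, these candidate limits are respectively $c_i/(\gamma_1(m\gamma_2-1))$ and $(m(\gamma_2-\gamma_1)+i-2)/(2(m\gamma_2-1))$; for $i=1,2$ the first is strictly smaller since $m\gamma_1>2$, so only that one matters. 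Taking the smaller of the two for $i\geq 3$ and summing over $i$ yields \eqref{eq:01}.

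For \eqref{eq:02} a cruder estimate suffices: retaining only the type-(i) freedom, which every coordinate enjoys, already gives the uniform lower bound $\liminf F_i(t)/t\geq (\gamma_2-\gamma_1)/(\gamma_1(m\gamma_2-1))$ for each $i$, and the $m$-fold sum is exactly \eqref{eq:02}. The principal technical point is executing the monotonicity analysis rigorously, i.e.\ ruling out that some scale interior to a free interval or crossing a block boundary could give a $\liminf$ ratio below the two explicit candidates; this is handled by the plateau/free-stretch monotonicity above, together with absorbing the $o(1)$ corrections coming from $\delta_n\to\gamma_2$ and the constants hidden in the $\asymp$ of \eqref{eq:SG}.
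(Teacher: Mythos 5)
Your proof is correct and reaches exactly the bounds \eqref{eq:01} and \eqref{eq:02} via the same critical-scale analysis as the paper, though routed through a slightly different technical device. The paper applies Falconer's nested Cantor construction lemma (his Example 4.6, cited here as Proposition~\ref{falke}) to each coordinate set $\mathcal{Q}_i$ individually with parameters $P_n, \epsilon_n$ chosen ``per block'' (and, for $i\geq 3$, per half-block to capture both candidate scales), and then combines via the product inequality $\dim_H(\prod A_i)\geq \sum\dim_H(A_i)$. You instead run the mass distribution principle directly on the product set with the natural Bernoulli product measure, obtaining $\dim_H(\prod\mathcal{Q}_i)\geq\liminf_t\frac{1}{t}\sum_i F_i(t)$ and then reducing to the per-coordinate $\liminf$ by superadditivity. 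These are essentially the same underlying argument (Falconer's lemma is itself proved by equidistributing mass on the nested intervals), but your presentation is more self-contained and avoids citing the specific lemma. Your identification of the critical scales --- end of the plateau at $t=\gamma_1 h_n$ for every $i$, and additionally at $t=2\delta_n h_n$ for $i\geq 3$ --- and the computation of the resulting ratios $\frac{(i-1)\gamma_2-\gamma_1}{\gamma_1(m\gamma_2-1)}$ and $\frac{m(\gamma_2-\gamma_1)+i-2}{2(m\gamma_2-1)}$ agree with what the paper extracts from Falconer's formula via \eqref{eq:GS0} and \eqref{eq:GS}. The only point that would warrant slightly more care in a written-up version is the uniform absorption of the $o(1)$'s from $\delta_n\to\gamma_2$ and the multiplicative constants implicit in $H_n\asymp H_{n-1}^{\gamma_2 m}$ into the $\liminf$; you flag this explicitly and the geometric growth of $h_n$ makes it routine.
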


We prove Lemma~\ref{ohlele}.
Our sets $\mathcal{Q}_i$ can be interpreted as a special case
of a construction from Falconer's book~\cite[Example~4.6]{falconer}.

\begin{proposition}[Falconer]  \label{falke}
	Let $[0,1] = E_0 \supseteq E_1 \supseteq E_2 \supseteq \cdots$
	be a decreasing sequence of sets, with each $E_n$ a union of a finite number of
	disjoint closed intervals (called $n$-th level basic intervals), with each interval of
	$E_{n-1}$ containing $P_{n}\geq 2$ intervals of $E_{n}$, 
	which are separated by gaps of length at least $\epsilon_n$,
	with $0<\epsilon_{n+1}<\epsilon_{n}$ for each $n$,
	which tend to $0$ as $n\to\infty$. Then the set
	\[
	F= \bigcap_{i\geq 1} E_i
	\]
	satisfies 
	\[
	\dim_H(F)\geq \liminf_{n\to\infty} \frac{ \log(P_1P_2 \ldots P_{n-1})}{-\log (P_n \epsilon_n)}.
	\]
\end{proposition}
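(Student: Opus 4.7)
The plan is to invoke Falconer's mass distribution principle. First I would construct a natural probability measure $\mu$ supported on $F$ by equidistribution: assign to each $n$-th level basic interval $I$ the mass $\mu(I) = (P_1 P_2 \cdots P_n)^{-1}$. Since by hypothesis each $(n-1)$-level interval contains exactly $P_n$ disjoint $n$-level subintervals, this prescription is consistent across levels and extends to a Borel probability measure on $F$ by Carath\'eodory's extension theorem.

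The key step is to estimate $\mu(B_r)$ for balls $B_r$ of small radius $r$. Given such $r$, choose the unique integer $n$ with $\epsilon_{n+1} \leq 2r < \epsilon_n$, which exists since $\epsilon_n$ is strictly decreasing and tends to $0$. Two complementary upper bounds then arise:
\begin{align*}
\mathrm{(a)} \quad \mu(B_r) &\leq \frac{C_1}{P_1 P_2 \cdots P_{n-1}}, \\
\mathrm{(b)} \quad \mu(B_r) &\leq \frac{C_2\, r}{\epsilon_n \cdot P_1 P_2 \cdots P_n}.
\end{align*}
For (a) one uses that consecutive $(n-1)$-level intervals are separated by gaps of length at least $\epsilon_{n-1}\geq \epsilon_n > 2r$, so $B_r$ meets at most two of them, and each contributes mass $(P_1\cdots P_{n-1})^{-1}$. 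For (b) one observes that, within each parent $(n-1)$-interval, $B_r$ meets at most $\lceil 2r/\epsilon_n\rceil + 1 \ll r/\epsilon_n$ of the $P_n$ children, each carrying mass $(P_1\cdots P_n)^{-1}$.

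To finish, fix any $s$ strictly below $\liminf_n \log(P_1\cdots P_{n-1})/(-\log(P_n \epsilon_n))$. Then for all sufficiently large $n$ one has $(P_1\cdots P_{n-1})(P_n \epsilon_n)^s \geq 1$. Taking whichever of (a), (b) is smaller---the crossover occurs at $r\asymp P_n\epsilon_n$---a short manipulation using this inequality yields $\mu(B_r) \leq C r^s$ for all sufficiently small $r$, uniformly in $B_r$. Falconer's mass distribution principle then gives $\dim_H F \geq s$, and letting $s$ approach the liminf proves the proposition.

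The main technical input, and the only non-routine point, is the derivation of the two bounds (a) and (b) with absolute constants: this requires carefully exploiting the separation hypothesis at both levels $n-1$ and $n$, and uses critically that the $\epsilon_n$ are strictly decreasing. Optimizing the minimum of the two bounds against $r^s$ and the final invocation of the mass distribution principle are then standard.
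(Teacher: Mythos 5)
The paper itself does not prove this proposition; it is quoted verbatim from Falconer's book (\cite[Example~4.6]{falconer}), where the proof is precisely the mass-distribution-principle argument you outline. Your write-up is therefore essentially a reproduction of Falconer's own proof, and the overall strategy (equidistributed measure on the Cantor construction, two complementary bounds on $\mu(B_r)$ at adjacent levels, interpolation via $\min\{a,b\}\le a^{1-s}b^s$, then the mass distribution principle) is correct.

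There is, however, an off-by-one error in the choice of $n$ that makes bound (b) false as written. You choose $n$ with $\epsilon_{n+1}\le 2r<\epsilon_n$. Then $2r<\epsilon_n$, so inside any $(n-1)$-level parent the ball $B_r$ (diameter $2r$) cannot straddle even one gap of length $\ge\epsilon_n$, hence meets at most one $n$-level child, and $\lceil 2r/\epsilon_n\rceil+1=2$. This is \emph{not} $\ll r/\epsilon_n$, since $r/\epsilon_n<1/2$ can be arbitrarily small; so (b) does not hold with an absolute $C_2$. The correct choice (as in Falconer) is $n$ with $\epsilon_n\le 2r<\epsilon_{n-1}$. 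Then (a) holds because $2r<\epsilon_{n-1}$ guarantees $B_r$ meets at most one $(n-1)$-level interval, so $\mu(B_r)\le (P_1\cdots P_{n-1})^{-1}$; and (b) holds because $2r\ge\epsilon_n$ gives $\lceil 2r/\epsilon_n\rceil+1\le 4r/\epsilon_n$, so $\mu(B_r)\le 4r/(\epsilon_n P_1\cdots P_n)$. Interpolating these via $\min\{a,b\}\le a^{1-s}b^s$ yields $\mu(B_r)\le C r^s/\bigl((P_1\cdots P_{n-1})(P_n\epsilon_n)^s\bigr)$, and the liminf hypothesis makes the denominator $\ge 1$ for large $n$, completing the proof exactly as in Falconer. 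With this index correction your argument is sound.
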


	From our setup it can be seen that $F=\mathcal{Q}_i$ for 
	$1\leq i\le m$ from Lemma~\ref{ohlele} meet the requirements
of Proposition~\ref{falke} with parameters
\begin{equation}  \label{eq:GS0}
P_{n}\asymp H_{n}^{\delta_n - \gamma_1} = H_n^{\gamma_2-\gamma_1-o(1)}, \qquad
\epsilon_{n}\asymp H_n^{-\delta_n}= H_n^{-\gamma_2+o(1)}
\end{equation}
as $n\to\infty$ for all $1\leq i\leq m$, and alternatively with
\begin{equation}  \label{eq:GS}
P_{2n}\asymp H_n^{\gamma_2-\gamma_1+o(1)}, \;
\epsilon_{2n}\asymp H_n^{-\gamma_2+o(1)}, \; P_{2n+1}\asymp H_n^{(i-2)\gamma_2+o(1)}, \;
\epsilon_{2n+1}\asymp H_n^{-i\gamma_2+o(1)}, 
\end{equation}
for $3\leq i\leq m$.
We provide more details. 
Assume the interval construction is done up to level
$2n-1$ which prescribes digits up to position $\lceil \gamma_1h_n\rceil-1$.
Then the free binary digit choice within 
$j\in [\gamma_1 h_n, \delta_n h_n)=[\gamma_1 h_n, (\gamma_2+o(1))h_n)$
means that we split each interval given after
step $2n-1$ in the next step into $2^{\delta_n h_n-1- \lfloor \gamma_1 h_n\rfloor }= 2^{(\gamma_2-\gamma_1+o(1))h_n}=H_n^{\gamma_2-\gamma_1+o(1)}$ subintervals,
each of length $\asymp 2^{-\gamma_1 h_{n+1}}=H_{n+1}^{-\gamma_1}$ due to the subsequent digits
vanishing until position $j=\lfloor \gamma_1 h_{n+1}\rfloor$, and two neighboring intervals roughly at distance 
$\epsilon_{2n}\asymp 2^{-\delta_n h_n}-2^{-\gamma_1 h_{n+1}}= H_{n}^{-\gamma_2+o(1)}-H_{n+1}^{-\gamma_1}= H_n^{-\gamma_2+o(1)}$ 
apart. A very similar idea applies in the next step to estimate
$P_{2n+1}, \epsilon_{2n+1}$, where we distinguish between 
various $i$ and for $i>2$ use that for $i=m$ we also 
have $H_{n+1}= H_n^{i\gamma_{2}(1+o(1))}$ by \eqref{eq:SG}.
Inserting \eqref{eq:GS}, \eqref{eq:SG} in Proposition~\ref{falke}
we may omit lower order terms and
obtain for $3\leq i\leq m$ that
\begin{align*}
\dim_H(\mathcal{Q}_i)&\geq \liminf_{n\to\infty} \frac{ \log(P_1P_2 \ldots P_{n-1})}{-\log (P_n \epsilon_n)}\\
&=\min \left\{ \liminf_{n\to\infty} 
\frac{ \log(P_1P_2 \ldots P_{2n})}{-\log (P_{2n+1} \epsilon_{2n+1})},
\liminf_{n\to\infty} \frac{ \log(P_1P_2 \ldots P_{2n-1})}
{-\log (P_{2n}\epsilon_{2n}) }
\right\}  \\
&= \min\left\{  \frac{ \left(\gamma_2-\gamma_1 + \frac{ (i-2)\gamma_2}{\gamma_2 m }\right) \sum_{j=0}^{\infty} (\gamma_2 m)^{-j} \log H_n } { 2\gamma_2\log H_n } , 
\frac{ \frac{(i-2)\gamma_2 + \gamma_2-\gamma_1}{\gamma_2 m} \sum_{j=0}^{\infty} (\gamma_2 m)^{-j}\log H_n }{ \gamma_1\log H_n }  \right\} \\
&= \min\left\{ \frac{ m(\gamma_2-\gamma_1)+i-2 }{ 2(\gamma_2 m-1)
} , \frac{ (i-1)\gamma_2 - \gamma_1 }{  \gamma_1(\gamma_2 m-1) }   \right\}, 
\end{align*}
where the last identity requires short computations
involving the geomtric sum formula. Similarly, for any $1\leq i\leq m$
from \eqref{eq:GS0}, \eqref{eq:SG} and Proposition~\ref{falke} we get
\[
\dim_H(\mathcal{Q}_i)\geq 
\frac{ (\gamma_2-\gamma_1)\sum_{j=1}^{\infty} (\gamma_2m)^{-j} \log H_n }{\gamma_1 \log H_n } = 
\frac{(\gamma_2-\gamma_1)\frac{1}{1-\frac{1}{\gamma_2m}}}
{m\gamma_1\gamma_2}= \frac{ \gamma_2-\gamma_1 }{\gamma_1(\gamma_2m-1) }.
\] 
Combining the respective estimates with the general fact $\dim_H(\prod A_i)\geq \sum \dim_H(A_i)$ for any $A_1, \ldots,A_m\subseteq \Rm$, see~\cite{falconer},
with $A_i=\mathcal{Q}_i$ proves the claims of the lemma.

\subsection{Proof of Theorem~\ref{hdd}}  \label{put}

We assume $m\geq 3$ here, for $m=2$ the proof works very similarly.
We show how to derive Theorem~\ref{hdd} from Lemma~\ref{ohlele}.
Assume the real parameters $\gamma_1, \gamma_2$ satisfy the stronger hypothesis
\begin{equation}  \label{eq:dno}
m(\gamma_1-1) > \gamma_2\geq \gamma_1 > 1+\frac{1}{m}.
\end{equation}
In fact the setup \eqref{eq:dno} automatically 
requires $\gamma_1>1+\frac{1}{m-1}$.
Take $(a_{n})_{n\geq 1}$ the sequence constructed in~\S~\ref{ko}
with the specialization $\Phi(t)=ct^{-1/m}$ and 
assume for the moment $a_n=2^{c_n}$ for integers $c_n$.
Take $M_n^{\prime}\asymp a_{mn}^{\gamma_2-1}$ the integer power of $2$ 
closest to $a_{mn}^{\gamma_2-1}$ for $n\geq 1$, so that we have
\begin{equation}  \label{eq:TOR}
a_{mn+1}= M_n^{\prime}a_{mn} \asymp a_{mn}^{\gamma_2}=
2^{c_{mn}\gamma_2}, \qquad n\geq 1,
\end{equation}
and let for $n\geq 1$ further
\[
H_n=a_{mn}, \qquad\quad h_n=c_{mn}. 
\]
Note that our particular case $\Phi(t)=ct^{-1/m}$ implies \eqref{eq:SG}.
Indeed then $L_n\asymp a_{mn+1}$ and $H_{n+1}=a_{m(n+1)}= L_na_{mn+1}^{m-1}\asymp a_{mn+1}^{m}\asymp a_{mn}^{\gamma_2m}=H_n^{\gamma_2m}$.
(For general $\Phi$ under $(d4(\gamma))$ we would get $H_{n-1}^{\gamma_2 m}\gg H_n\gg H_{n-1}^{\gamma_2/\gamma}$.)
Let $\delta_n>0$ be defined by
\[
a_{mn+1}= a_{mn}^{\delta_n}, \qquad n\geq 1,
\]
which satisfies $\delta_n= \gamma_2+o(1)$ as $n\to\infty$ by \eqref{eq:TOR}, more precisely $H_n^{\delta_n}\asymp H_n^{\gamma_2}$.
(The definition of $\delta_n$ agrees with $M_n$ in~\S~\ref{ko}, however
since we do not assume it is an integer here, we prefer to change notation
for clarity.)

Consider the real numbers $\xi_i$ as in~\S~\ref{ko} when $a_{n}=2^{c_n}$ as above, i.e. $\xi_i=\sum_{n\geq 0} 2^{-c_{mn+i}}$. Then 
in the binary expansion the digit of $\xi_i$ is $1$ at 
places $c_{mn+i}, n\geq 0$, and $0$ elsewhere.
Then we consider the sets 
$\mathcal{Q}_i= \mathcal{Q}_i(\gamma_1, \gamma_2)\subseteq \mathbb{R}$, $1\leq i\leq m$, 
as in Lemma~\ref{ohlele} consisting
of the real numbers $\zeta_i=\sum g_{i,j}2^{-j}$, where
$g_{i,j}\in \{0,1\}$ depends on $\zeta_i$, obtained 
from the $\xi_i$ when we 
change the binary digit $g_{i,j}$ of $\xi_i$ from $0$ to an arbitrary digit
in $\{0,1\}$ in the intervals of type (i), (ii), (iii).
Then, any $\zeta_i= \sum_{j\geq 1} g_{i,j}2^{-j}\in \mathcal{Q}_i$ still has binary digit $g_{i,j}=0$ at places $j$
within the following intervals for all $n\geq 1$:
\begin{enumerate}
	\item[$(i^{\ast})$]  For $1\leq i\le m-1$, within 
	intervals 
	\[
	j\in [\max\{i,2\}\delta_{n-1} c_{m(n-1)}+1, \gamma_1 c_{mn})\cap \mathbb{Z}=[\max\{i,2\}\delta_{n-1} h_{n-1}+1, \gamma_1 h_{n})\cap \mathbb{Z}, 
	\]
	which contains $[h_{n}+1, \gamma_1 h_{n})\cap \mathbb{Z}$.
	\vspace{0.1cm}
	\item[$(ii^{\ast})$] For $i=m$, within 
	intervals
	\[j\in [c_{mn}+1,\gamma_1 c_{mn})\cap \mathbb{Z}=[h_n+1, \gamma_1 h_n)\cap \mathbb{Z}.
	\]
	\item[$(iii^{\ast})$]  For $i=2$, within 
	intervals 
	\[
	j\in [c_{mn+1}+1, 2c_{mn+1}-1]\cap \mathbb{Z}=[\delta_n h_{n}+1, 2\delta_n h_{n}-1]\cap \mathbb{Z}.
	\]
	\item[$(iv^{\ast})$]  For $i=1$ and $3\leq i\le m$, within 
	intervals 
	\[
	j\in [c_{mn+1}+1, 2c_{mn+1}]\cap \mathbb{Z}=[\delta_n h_{n}+1, 2\delta_n h_{n}]\cap \mathbb{Z}.
	\]
\end{enumerate}

Note that $i\delta_n h_n=ic_{mn+1}=c_{mn+i}$ for $1\leq i\leq m-1$ 
and $n\geq 1$ by \eqref{eq:tats}.
We claim 

\begin{lemma}  \label{mure}
	Any $\uz\in\prod\mathcal{Q}_i$ as above
	satisfies 
	\[
	\limsup_{Q\to\infty} Q^{1/m}\psi_{\uz}(Q)=c,
	\] 
	i.e. $(C1^{\prime}), (C2)$ for $\Phi(t)=ct^{-1/m}$,
	where $(C1^{\prime})$ is $(C1)$ up to a admitting a factor
	$1+\varepsilon$ in the right hand side.
\end{lemma}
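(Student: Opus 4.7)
The plan is to adapt the proofs of $(C1)$ and $(C2)$ from \S\ref{proof} to our Cantor-like setting, closely following the binary variant in \S\ref{met}. I would verify $(C1^{\prime})$: for every large $Q$ some $q\leq Q$ satisfies $\Vert q\uz\Vert\leq (1+\epsilon)cQ^{-1/m}$, and $(C2)$: for $Q_f=a_{m(f+1)}-1$ every $q\leq Q_f$ satisfies $\Vert q\uz\Vert\geq (1-\epsilon)cQ_f^{-1/m}$. Together these yield the claimed $\limsup$ equality since $L_f=\lfloor c^m d_f\rfloor$ forces $a_{m(f+1)}=(1+o(1))c^m d_f^m$ and hence $cQ_f^{-1/m}=(1-o(1))/d_f$.

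For $(C1^{\prime})$ I would follow the three cases from the proof of $(C1)$. In Case~1 ($Q\in[a_{mf},a_{mf+1})$) take $q=a_{mf}=2^{h_f}$; by $(i^{\ast})$ and $(ii^{\ast})$ each $\zeta_i$ has its first possibly non-zero digit past $h_f$ at position $j\geq\gamma_1 h_f$, so $\Vert q\zeta_i\Vert\ll 2^{(1-\gamma_1)h_f}$. Since $m(\gamma_1-1)>\gamma_2$ by \eqref{eq:dno} while $cQ^{-1/m}\geq c\cdot 2^{-\delta_f h_f/m}$, this is $o(cQ^{-1/m})$. In Cases~2 and~3 take $q=a_{mf+1}=d_f$; the dominant fractional-part contribution of $q\zeta_2$ comes from the forced $1$-digit at $c_{mf+2}=2\delta_f h_f$, giving $\Vert q\zeta_2\Vert=d_f^{-1}(1+o(1))$, while for the other components the free-digit contributions in (ii) and (iii) are bounded by $d_f\cdot 2^{-2\delta_f h_f}=1/d_f$. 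Hence $\Vert q\uz\Vert\leq(1+o(1))/d_f$, which matches $(1+o(1))cQ^{-1/m}$ precisely at the tight endpoint $Q=Q_f$.

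For $(C2)$, given any $q\leq Q_f$ I would let $s=s(q)$ be the maximum index with $a_s\mid q$, write $q=Ba_s$ with $a_{s+1}\nmid q$, and pick the component $\zeta_1$ (Case~I, $s\leq mf$), $\zeta_h$ (Case~IIa, $s=mf+h-1$, $2\leq h\leq m-1$), or $\zeta_m$ (Case~IIb, $s=mf+m-1$), mirroring the proof of $(C2)$. The essential observation is that although the free intervals (i)--(iii) of Lemma~\ref{ohlele} may introduce $1$-digits inside $(c_s,c_{s+1})$---for instance $\zeta_3$'s (ii)-interval at level $f$ begins at $2\delta_f h_f+1=c_s+1$ when $h=3$---the endpoint $c_{s+1}$ is \emph{never} in any free interval, so $g_{h,c_{s+1}}=1$ (respectively $g_{1,c_{mf+1}}=1$ in Case~I) is always preserved. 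Consequently the truncated partial sum
\[
r_s:=\sum_{j=c_s+1}^{c_{s+1}} g_{h,j}\,2^{-j}=\frac{v_s}{a_{s+1}}
\]
has odd numerator $v_s=\sum_{j} g_{h,j}\,2^{c_{s+1}-j}$: the leading summand ($j=c_{s+1}$) equals $1$ while every other summand is a multiple of $2$. Since $a_{s+1}$ is a power of $2$ and $a_s\mid a_{s+1}$, the reduced denominator of $qr_s=Ba_sv_s/a_{s+1}$ equals $e_s/\gcd(B,e_s)\leq e_s\leq d_f$ where $e_s=a_{s+1}/a_s$, yielding $\Vert qr_s\Vert\geq 1/d_f$. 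The tail $U_q=q\sum_{j>c_{s+1}}g_{h,j}2^{-j}=O(Q\cdot 2^{-\gamma_1 h_{f+1}})$ is $o(1/d_f)$ using $h_{f+1}\approx m\delta_f h_f$, $Q\leq c^m d_f^m$, and $m(\gamma_1-1)>1$. Thus $\Vert q\zeta_h\Vert\geq(1-o(1))/d_f\geq(1-o(1))cQ_f^{-1/m}$ uniformly in $q$. The principal technical hurdle is precisely this oddness of $v_s$ in the presence of free digits in $(c_s,c_{s+1})$; it is guaranteed by the deliberate design of the free intervals in Lemma~\ref{ohlele}, which all terminate immediately before a forced $1$-digit. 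Combining $(C1^{\prime})$ and $(C2)$ with $cQ_f^{-1/m}=(1-o(1))/d_f$ then gives $\limsup_{Q\to\infty}Q^{1/m}\psi_{\uz}(Q)=c$.
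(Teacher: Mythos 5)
Your proposal follows the same outline as the paper's proof: it verifies $(C1')$ by the three-case split of \S~\ref{proof} and $(C2)$ by the $s(q)$-indexed case analysis, identifying exactly the two points where the free digits of Lemma~\ref{ohlele} could interfere (the coprimality/oddness of the numerator, and the negligibility of the tail), and using \eqref{eq:dno} in the same way the paper does ($\gamma_1-1>\gamma_2/m$ for Case~1 of $(C1')$, $\gamma_1>1+1/m$ for the error term in $(C2)$). The key observation you isolate --- that the free intervals never contain the positions $c_{mn+i}$ so the forced $1$-digits survive, making the relevant numerator odd while the denominator is a power of $2$ --- is precisely the paper's coprimality argument from \S~\ref{met} carried over to \S~\ref{put}.

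One place your write-up is imprecise: the displayed formula $r_s=\sum_{j=c_s+1}^{c_{s+1}}g_{h,j}2^{-j}$ and the tail bound $U_q=q\sum_{j>c_{s+1}}g_{h,j}2^{-j}=O(Q\cdot 2^{-\gamma_1 h_{f+1}})$ are correct for Cases~IIa, IIb (and for Case~I only when $s=mf$). For Case~I with $s<mf$ the truncation point must be $c_{mf+1}$, not $c_{s+1}$: with your truncation the ``tail'' still contains the forced $1$-digit of $\zeta_1$ at $c_{mf+1}$ and hence has a large (non-negligible) contribution, so the bound $O(Q\cdot 2^{-\gamma_1 h_{f+1}})$ fails and $\Vert qr_s+U_q\Vert$ is not controlled. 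You do flag the right fix parenthetically (``respectively $g_{1,c_{mf+1}}=1$ in Case~I''), which matches the paper's $V_q$/$D(\uz)$ decomposition truncated at $a_{mf+1}$, but as written the formulas do not cover that case. With that adjustment your argument coincides with the paper's.
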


\begin{proof}[Proof of Lemma~\ref{mure}]
	Take $\uz\in\prod\mathcal{Q}_i$. 
	We first verify $(C1^{\prime})$. 
	Assume we are in Cases 2,~3 of the proof of $(C1)$ in~\S~\ref{proof}.
	We again consider integers $q=a_{mf+1}=2^{c_{mf+1}}= H_f^{\delta_f} \asymp H_f^{\gamma_2}$ that satisfy $q<Q<a_{m(f+1)}\leq a_{mf+1}^{m}$.
	By $(iii^{\ast})$,
	we have essentially the same estimates for $\Vert q\zeta_2\Vert$ as in~\S~\ref{proof} 
	and by $(iv^{\ast})$ the other $\Vert q\zeta_i\Vert$, $i\neq 2$, take smaller values, so
	\[
	\Vert q\uz\Vert= \Vert q\zeta_2\Vert 
	\leq a_{mf+1}^{-1} + O(a_{mf+1}a_{m(f+1)+1}^{-1}),\qquad \uz\in \prod \mathcal{Q}_i. 
	\]
	As in the proof in~\S~\ref{proof}, by construction $a_{mf+1}^{-1}=d_f^{-1}<\Phi(Q)=cQ^{-1/m}$ for any $Q< a_{m(f+1)}$.
	Hence, 
	upon admitting a factor $1+\varepsilon$ coming from the
	lower order error
	term (which however we cannot control as freely here in view
	of \eqref{eq:TOR}), we satisfy in $(C1)$.
	In Case 1 of the proof of $(C1)$ in~\S~\ref{proof}, we use $(i^{\ast}), (ii^{\ast})$ and
	$\gamma_1-1 > \gamma_2/m$ from \eqref{eq:dno} when
	$\Phi(t)=ct^{-1/m}$ (under $(d4(\gamma))$ in general $\gamma_1-1> \gamma_2\gamma$)
	to guarantee
	the same estimate. Indeed, for $q=a_{mf}=2^{h_f}=H_f$
	and any $Q<a_{mf+1}=H_f^{\delta_n}\leq H_f^{\gamma_2+o(1)}$ we calculate
	\[
	\Vert q\uz\Vert \ll 2^{-\gamma_1 h_f+ h_f} = H_f^{-(\gamma_1-1)} 
	\ll a_{mf+1}^{-\frac{ \gamma_1-1 }{ \gamma_2 }+o(1)}
	= o(a_{mf+1}^{-1/m}), \qquad \text{as}\; f\to\infty,\quad \uz\in \prod \mathcal{Q}_i.
	\]
	Hence $\Vert q\uz\Vert< cQ^{-1/m}=\Phi(Q)$ for $f\geq f_0$.
	The proof of $(C2)$ is almost analogous to
	the classical case in~\S~\ref{proof}, with two minor changes.
	Firstly, we again use $\gamma_1 > 1+1/m$ 
	from \eqref{eq:dno} and $(i^{\ast}), (ii^{\ast})$ for
	the error term $O(Qd_{f+1}^{-1})=O(a_{m(f+1)}a_{m(f+1)+1}^{-1})$ to be negligible.
	Indeed, by $\gamma_1>1+1/m$ from \eqref{eq:dno} and $(i^{\ast}), (ii^{\ast})$,
	the error term can be estimated $\ll a_{m(f+1)}^{1-\gamma_1}=o(a_{m(f+1)}^{-1/m})$, 
	while by construction the main term
	is just slightly smaller than $\Phi(a_{m(f+1)})=ca_{m(f+1)}^{-1/m}$.
	(In general
	under condition $(d4(\gamma))$ on $\Phi$, we require $\gamma_1>1+\gamma$
	for the same conclusion.)  
	Secondly
	our digital variations from (i) resp. (ii) induce slightly
	different integers $D=D(\uz)$ in Case I resp. $Y_q=Y_q(\uz)$ 
	and $Z_q=Z_q(\uz)$ in Case II, now depending
	on the choice of $\uz\in \prod \mathcal{Q}_i$. 
	The according crucial properties hold again for similar
	reasons as in~\S~\ref{met}.
	Note that the remainder terms are essentially 
	unaffected in view of $(i^{\ast}), (ii^{\ast})$.
	Hence our claim is proved. 
\end{proof}

By Lemma~\ref{mure}, and since $\lambda(\uz)\geq 1>1/m$ for any $\uz\in \prod \mathcal{Q}_i$
is easy to see by choosing integers $q=a_{mn+1}=\delta_n h_n$
in view of $(iii^{\ast}), (iv^{\ast})$, see also~\S~\ref{pbeides} below,
the set $\textbf{F}_{m,c}$ contains $\prod \mathcal{Q}_i$. Thus
we may apply Lemma~\ref{ohlele} to bound its Hausdorff dimension 
from below as in \eqref{eq:01}.
For the asymptotical bound as $m\to\infty$,
note that for any pair $\gamma_1, \gamma_2$ with
$\gamma_1>\gamma_2/m+1$ and
$\gamma_2>1+\frac{1}{m-1}$ the assumption \eqref{eq:dno} holds.
The bound \eqref{eq:01} clearly decays in $\gamma_1$. 
So basically we want to maximize \eqref{eq:01} 
over $\gamma_1, \gamma_2$ satisfying $\gamma_1=\gamma_2/m+1$ and $\gamma_2>1+\frac{1}{m-1}$. To give an asymptotical estimate,
we choose $\gamma_1$ just slightly larger than $\gamma_2/m+1$ and
let $\gamma_2=m^{\beta}$ for fixed $\beta\in(0,1)$. Then $\gamma_1=1+ O(m^{\beta-1})$ is just slightly larger than $1$, so it is 
asymptotically negligible in \eqref{eq:01}. 
We may further omit the small
positive first expression $2(\gamma_2-\gamma_1)/(\gamma_1(m\gamma_2-1))$.
For large $m$ then
we check that the left expression in the minimum of \eqref{eq:01}
is of order $1/2+o(1)$
while the right is of order $i/m+o(m)$.
Hence the minimum in \eqref{eq:01}
equals roughly $i/m$ for $3\leq i\leq m/2+o(m)$ and
$1/2+o(1)$ for $m/2+o(m)\leq i\leq m$. Thus as $m\to\infty$ we bound 
the sum in \eqref{eq:01} from below by
\[
\left(\sum_{i=3}^{m/2 } \frac{i}{m} - o(m) \right) + \left( \sum_{i=m/2 }^{m} \frac{1}{2} - o(m)\right)= \frac{m }{8} + \frac{m }{4} -o(m)= \frac{3}{8}m-o(m),
\]
the claimed asymptotical estimate \eqref{eq:cosinus2}.

Alternatively
by Lemma~\ref{ohlele} and Lemma~\ref{mure} the Hausdorff dimension of $\textbf{F}_{m,c}$ 
can be estimated from below by \eqref{eq:02}. 
We now prove \eqref{eq:sinus2} by optimizing 
the parameters $\gamma_1, \gamma_2$.
We already noticed that the expression
in \eqref{eq:02} decreases in $\gamma_1$. Hence 
in view of \eqref{eq:dno} we again take parameters related by the identity 
$\gamma_1= \frac{\gamma_2}{m}+1 + \epsilon$
with small $\epsilon>0$ and bare in mind that we require $\gamma_2 > 1+\frac{1}{m-1}$ 
for the conditions
\eqref{eq:dno} to be satisfied. Since $\epsilon>0$ can be arbitrarily small, we want to maximize the function
\[
\gamma_2\longmapsto m\frac{ \frac{m-1}{m}\gamma_2 - 1}{ (\frac{\gamma_{2}}{m}+1)(m\gamma_2-1) } = 
\frac{ (m-1)\gamma_2 - m  }{ \gamma_2^2 + (m-\frac{1}{m})\gamma_2-1  }
\]
over $\gamma_2>1+\frac{1}{m-1}$. By differentiation
we verify that the maximum is attained at
\[
\gamma_2= \frac{m+\sqrt{ m(m^2-m+1) }}{m-1}>1+\frac{1}{m-1}
\]
the positive solution of $(m-1)x^2-2mx+m(1-m)=0$.
Inserting in the function gives the desired estimate \eqref{eq:sinus2}
after a short simplification.

Finally,
we may generalize Lemma~\ref{ohlele} to the situation where $H_n$
are not powers of $2$, and consequently
drop the assumption $a_n=2^{c_n}$, essentially
by the argument explained in detail in~\S~\ref{tem}. We omit 
recalling the strategy.

\subsection{Proof of Theorem~\ref{beides} }  \label{pbeides}

Assume $\gamma_1, \gamma_2$ satisfy \eqref{eq:dno} and also
\begin{equation}  \label{eq:leilei}
(\gamma_1-1)^2> \gamma_2.
\end{equation}
We again assume for simplicity that $a_n=2^{c_n}$ for integers $c_n$,
the general case can be obtained as in \S~\ref{tem}.
Recall the sets $\mathcal{Q} _i$ induced by $\gamma_1, \gamma_2$, constructed in~\S~\ref{put}
from freely altering binary digits of $\ux$ from~\S~\ref{ko}
in intervals of type (i), (ii), (iii).
We slightly alter $\mathcal{Q}_1$ by
imposing the addititonal condition (iv)
that any $\zeta_1\in \mathcal{Q}_1$ has binary digit $g_{1,j}=1$
at position $j=\lfloor \gamma_1 h_n\rfloor$, for $n\geq 1$.
Denote this set by $\mathcal{Q}_1^{\ast}\subseteq \mathcal{Q}_1$.
We claim 
\begin{lemma} \label{mitp}
	For any $\uz\in \mathcal{Q}_1^{\ast}\times \prod_{i=2}^{m} \mathcal{Q}_i$ as above we have
	\[
	\lambda(\uz)= \gamma_1-1.
	\]	
\end{lemma}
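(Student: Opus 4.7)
The plan is to prove $\lambda(\uz) \geq \gamma_1 - 1$ and $\lambda(\uz) \leq \gamma_1 - 1$ in turn, confirming the sharp value.

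For the lower bound, I would use the test denominators $q_n = H_n = a_{mn} = 2^{h_n}$, just as in the proof of $(C3)$ in \S\ref{proof}. Properties $(i^{\ast})$ and $(ii^{\ast})$ from \S\ref{put} guarantee that every coordinate $\zeta_i$ has binary digit $g_{i,j}(\zeta_i) = 0$ for integers $j \in (h_n, \gamma_1 h_n)$, while digits at positions $j \leq h_n$ contribute integers upon multiplication by $q_n$. Hence
\[
\Vert q_n \zeta_i\Vert \leq \sum_{j \geq \lceil \gamma_1 h_n \rceil} 2^{h_n - j} \ll q_n^{1-\gamma_1}, \qquad 1 \leq i \leq m.
\]
Taking the maximum over $i$ yields $\Vert q_n \uz\Vert \ll q_n^{1-\gamma_1}$, whence $\lambda(\uz) \geq \gamma_1 - 1$.

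For the upper bound, first note that at the same $q_n = H_n$ the additional constraint $g_{1,\lfloor \gamma_1 h_n\rfloor}(\zeta_1) = 1$ defining $\mathcal{Q}_1^{\ast}$ forces $\Vert q_n \zeta_1\Vert \gg q_n^{1-\gamma_1}$: the single term $2^{h_n - \lfloor \gamma_1 h_n\rfloor}$ dominates the geometric tail in $q_n\zeta_1$. This shows the rate $\gamma_1 - 1$ is attained but not improved along $q_n = H_n$. To rule out improvements along any other sequence of denominators, I would run a case analysis in the spirit of the proof of $(C2)$ in \S\ref{proof}. Given $q$, let $f$ be the integer with $H_f \leq q < H_{f+1}$, and split: first, whenever the coprimality arguments from Proposition~\ref{uv} combined with the largest $a_s$ dividing $q$ yield a $(C2)$-type bound $\Vert q\uz\Vert \gg d_f^{-1} \asymp H_f^{-\gamma_2}$, this already dominates $q^{1-\gamma_1}$ as soon as $q \geq H_f^{\gamma_2/(\gamma_1-1)}$. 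Second, in the residual range $H_f \leq q < H_f^{\gamma_2/(\gamma_1-1)}$, writing $q = 2^k r$ with $r$ odd and exploiting the forced $1$-digits of each $\zeta_i$ at the positions $i\delta_f h_f$ (together with the forced $1$ at $\lfloor \gamma_1 h_f\rfloor$ for $\zeta_1$), I would show that $\max_i \Vert q\zeta_i\Vert$ is bounded below by a constant multiple of $q^{1-\gamma_1}$ regardless of the choice of the free digits.

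The main obstacle is this second case: one must verify uniformly in $\uz \in \mathcal{Q}_1^{\ast} \times \prod_{i=2}^m \mathcal{Q}_i$ that no choice of the free digits in the intervals $[\gamma_1 h_n, \delta_n h_n - 1]$ conspires to produce $q$ violating $\Vert q\uz\Vert \gg q^{1-\gamma_1}$. The point is that the best rate extractable from the forced $1$-digit pattern of any single coordinate is of order $\delta_f/\gamma_1 - 1 = \gamma_2/\gamma_1 - 1 + o(1)$, and hypothesis \eqref{eq:leilei} forces $\gamma_2 < (\gamma_1-1)^2 < \gamma_1^2$, so $\gamma_2/\gamma_1 - 1 < \gamma_1 - 1$ with a fixed positive gap. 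This is precisely where \eqref{eq:leilei} is used; it furnishes the margin needed to conclude $\lambda(\uz) \leq \gamma_1 - 1$, and combined with the lower bound gives equality.
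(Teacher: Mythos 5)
Your lower bound and the large-$q$ half of the upper bound match the paper: for $q\geq H_f^{\gamma_2/(\gamma_1-1)+o(1)}$ the $(C2)$-type estimate $\Vert q\uz\Vert\geq d_f^{-1}(1-o(1))\asymp H_f^{-\gamma_2}$ already beats $q^{-(\gamma_1-1)-\varepsilon}$. Where your proposal has a genuine gap is in the residual range $H_f\leq q< H_f^{\gamma_2/(\gamma_1-1)+o(1)}$, which by \eqref{eq:leilei} is contained in $[H_f,\,H_f^{\gamma_1-1})$. You sketch a digit-counting argument ("write $q=2^kr$, exploit the forced $1$-digits of each $\zeta_i$") and explicitly flag it as the main obstacle, but you never close it — and it is unlikely to close by raw digit manipulation, because multiplying a dyadic expansion by an arbitrary odd $r$ produces carries that a free choice of digits in the long intervals $[\gamma_1 h_n,\delta_n h_n)$ can in principle absorb. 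What the paper actually does here is entirely different in kind: it appeals to Proposition~\ref{kp} (Legendre's theorem plus the convergent-gap relation $s_{k+1}\asymp|s_kx-r_k|^{-1}$, applied to the \emph{single} coordinate $\zeta_1$). The quality approximation $\Vert H_f\zeta_1\Vert\asymp H_f^{-(\gamma_1-1)}$ (with exponent $\gamma_1>2$ guaranteed by the parameter range) forbids any $q$ with $H_f\leq q\ll H_f^{\gamma_1-1}$ and $q$ not a multiple of $H_f$ from achieving $\Vert q\zeta_1\Vert<q^{-1}/2$; since $\gamma_1-1+\varepsilon>1$, this contradicts \eqref{eq:tauid}. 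The multiple-of-$H_f$ case is then handled by a one-line scaling argument. So the role of \eqref{eq:leilei} is precisely to push $q$ into the window $q\ll H_f^{\gamma_1-1}$ where Proposition~\ref{kp} is applicable — this is also slightly different from the inequality $\gamma_2/\gamma_1-1<\gamma_1-1$ you derive, which only requires $\gamma_2<\gamma_1^2$ rather than $\gamma_2<(\gamma_1-1)^2$. Without the continued-fraction ingredient your proposal does not establish the upper bound $\lambda(\uz)\leq\gamma_1-1$.
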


We believe that in fact $\lambda(\uz)= \max\{ \gamma_1-1,1\}$
for generic $\uz\in \prod_{i=1}^{m} \mathcal{Q}_i$ 
whenever $\gamma_1, \gamma_2$ are related by \eqref{eq:dno},
however we are unable to prove it. The lower bound $1$ hereby comes
from $(iii^{\ast}), (iv^{\ast})$.
The proof of Lemma~\ref{mitp} relies on the following standard result
on rational approximation to a single real number.

\begin{proposition} \label{kp}
	Let $x\in\mathbb{R}$. Assume for a reduced fraction $p/q\in \mathbb{Q}$ and $\tau>2$ we have
	\[
	|x- \frac{p}{q}| = q^{-\tau}.
	\]
	Then for any rational $\tilde{p}/\tilde{q}\neq p/q$ 
	with $q\leq \tilde{q}\ll q^{\tau-1}$ for a sufficiently small absolute implied constant, we have $|x-\tilde{p}/\tilde{q}|\geq \tilde{q}^{-2}/2$ or equivalently $\Vert \tilde{q}x\Vert \geq \tilde{q}^{-1}/2$.
\end{proposition}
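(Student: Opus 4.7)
The plan is the standard triangle-inequality argument for separating nearby rationals, yielding both formulations simultaneously. First I observe that, since $p/q$ is reduced and $\tilde{p}/\tilde{q}\neq p/q$, the integer $p\tilde{q}-\tilde{p}q$ is nonzero, so
\[
\left|\frac{p}{q}-\frac{\tilde{p}}{\tilde{q}}\right|=\frac{|p\tilde{q}-\tilde{p}q|}{q\tilde{q}}\geq \frac{1}{q\tilde{q}}.
\]
Combining with the reverse triangle inequality $|x-\tilde{p}/\tilde{q}|\geq |p/q-\tilde{p}/\tilde{q}|-|x-p/q|$ and the hypothesis $|x-p/q|=q^{-\tau}$ would give
\[
\left|x-\frac{\tilde{p}}{\tilde{q}}\right|\geq \frac{1}{q\tilde{q}}-q^{-\tau}.
\]

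Next I show the right-hand side is at least $1/(2\tilde{q}^2)$ under the hypothesis $\tilde{q}\leq c q^{\tau-1}$, provided the implied constant satisfies $c\leq 1/2$. The required inequality rearranges to $(2\tilde{q}-q)/(2q\tilde{q}^{2})\geq q^{-\tau}$. Since $\tilde{q}\geq q$, the left-hand side is at least $1/(2q\tilde{q})$, and the desired bound reduces to $\tilde{q}\leq q^{\tau-1}/2$, which is precisely the assumption with $c=1/2$. This establishes the first formulation.

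For the equivalent formulation in terms of $\Vert\tilde{q}x\Vert$, I would take $\tilde{p}$ to be the integer nearest $\tilde{q}x$, so that $\Vert\tilde{q}x\Vert=\tilde{q}|x-\tilde{p}/\tilde{q}|$, and then apply the inequality just proved; the side condition $\tilde{p}/\tilde{q}\neq p/q$ excludes the degenerate case $\tilde{q}=kq$ with $\tilde{p}=kp$ in which this bound can genuinely fail (there $\Vert\tilde{q}x\Vert=kq^{1-\tau}$ is unrelated to $1/(2\tilde{q})$ and can be much smaller once $\tau$ is large). I do not foresee a substantive obstacle: the argument is entirely elementary, and the only book-keeping concerns the implied constant in $\tilde{q}\ll q^{\tau-1}$, which the computation above pins down as $1/2$ (or any smaller positive value, at the cost of adjusting the constant $1/2$ in the conclusion accordingly).
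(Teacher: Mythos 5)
Your proof is correct, and it takes a genuinely different route from the paper's. The paper's stated approach invokes either (a) Legendre's theorem (that $|r/s-x|<s^{-2}/2$ forces $r/s$ to be a continued-fraction convergent) combined with the gap relation $s_{k+1}\asymp|s_kx-r_k|^{-1}$ between consecutive convergents, or (b) Minkowski's Second Convex Body Theorem. You instead run a direct two-line triangle-inequality argument: $|p/q-\tilde{p}/\tilde{q}|\geq 1/(q\tilde q)$ because $p\tilde q-\tilde p q$ is a nonzero integer, and then $1/(q\tilde q)-q^{-\tau}\geq 1/(2\tilde q^2)$ once $\tilde q\leq q^{\tau-1}/2$, using $\tilde q\geq q$ to bound $2\tilde q-q\geq\tilde q$. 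This is more elementary and entirely self-contained (no continued-fraction or geometry-of-numbers input), and it has the concrete advantage of pinning down the implied constant as $1/2$. You also correctly diagnose the delicate point in the "or equivalently" phrasing: for the $\Vert\tilde q x\Vert$ formulation one takes $\tilde p$ to be the nearest integer to $\tilde q x$, and the hypothesis $\tilde p/\tilde q\neq p/q$ is exactly what excludes the multiples $\tilde q=kq$ with $\tilde p=kp$, where the conclusion can fail because $\Vert\tilde q x\Vert=kq^{1-\tau}$ is tiny when $\tau$ is large; this matches how the proposition is invoked later in the paper, where the multiple case is handled separately. The only cost of your route relative to the convergent-based one is that the latter also hands you structural information (which denominators $\tilde q$ can come close), but for the stated proposition your argument is simpler and just as strong.
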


Proposition~\ref{kp} follows from Legendre Theorem stating 
that $|r/s-x|<s^{-2}/2$ implies that $r/s$ (after reduction)
must be a convergent of the continued fraction
expansion of $x$, and the relation
$s_{k+1} \asymp |s_{k}x-r_{k}|^{-1}$ between two consecutive convergents $r_{k}/s_k$ and $r_{k+1}/s_{k+1}$. See~\cite[Proposition~4.2]{ichdimproduct} for a short proof 
of the latter fact. Alternatively, Minkowski's Second Convex Body
Theorem directly implies Proposition~\ref{kp}, see also~\cite{khint}.

\begin{proof}[Proof of Lemma~\ref{mitp}]
	Let $\uz\in \mathcal{Q}_1^{\ast}\times \prod_{i=2}^{m} \mathcal{Q}_i$ be given.
	By properties $(i^{\ast}), (ii^{\ast})$ from~\S~\ref{put}, the integers $H_n=2^{h_n}=a_{mn}$ induce an estimate
	\begin{equation}  \label{eq:HG}
	\Vert H_{n}\uz\Vert \asymp \Vert H_n\xi_1\Vert\asymp
	2^{-h_n \gamma_1+ h_n} = H_n^{-(\gamma_1-1)}, \qquad\qquad n\geq 1.
	\end{equation}
	For the lower bound we have used the non-zero digit 
	assumption (iii). 
	The lower estimate $\lambda(\uz)\geq \gamma_1-1$ follows
	directly from \eqref{eq:HG}.  
	Assume now conversely to the claim of the lemma
	that we have strict inequality $\lambda(\uz)>\gamma_1-1$.
	Then for some $\varepsilon>0$, there are arbitrarily large integers $q>0$ with the property
	\begin{equation}  \label{eq:tauid}
	\Vert q\uz\Vert < q^{-(\gamma_1-1)-\varepsilon}.
	\end{equation}
	Let $q$ be such an integer and let $f$ be the index with 
	$H_f\leq q< H_{f+1}$. 
	Recall that $H_f=a_{mf}$ and $H_{f+1}=a_{m(f+1)}$
	and the notation $d_n=a_{mn+1}=a_{mn}^{\delta_n}=H_n^{\delta_n}$ for any $n\geq 1$.
	The proof of $(C2)$ in~\S~\ref{proof} (or~\S~\ref{put}) shows that for any $q<H_{f+1}$ 
	we have
	\begin{equation} \label{eq:TRO}
	\Vert q\uz\Vert \geq d_{f}^{-1}+O(H_{f+1}d_{f+1}^{-1})= H_f^{-\delta_f}+O(H_{f+1}d_{f+1}^{-1}).
	\end{equation}
	We verify that the error term is of smaller order the main term.
	Indeed, $\delta_f=\gamma_2+o(1)$ as $f\to\infty$ implies $d_{f+1}=H_{f+1}^{\delta_{f+1}}=H_{f+1}^{\gamma_2+o(1)}=H_f^{m\gamma_2+o(1)}$, and since $m\gamma_2-m>\gamma_2$ by $\gamma_2>\gamma_1>(3+\sqrt{5})/2>1+1/(m-1)$ for $m\geq 2$, the 
	claim follows. Hence, combining \eqref{eq:TRO} with \eqref{eq:tauid}
	and using $\delta_f=\gamma_2+o(1)$ as $f\to\infty$ and \eqref{eq:leilei}, we conclude
	\[
	q < H_f^{ \frac{\gamma_2}{\gamma_1-1}+o(1) } < H_f^{ \gamma_1-1 }, \qquad f\geq f_0.
	\]
	On the other hand, by \eqref{eq:HG} for $n=f$ and Proposition~\ref{kp}, 
	we get the contradictory claim $q\gg H_f^{\gamma_1-1+\varepsilon}>H_f^{\gamma_1-1}$,
	unless
	$q=PH_f, P\in \mathbb{Z}\setminus\{0\}$ 
	is a multiple of $H_f=a_{mf}$. In fact, in the latter case if $\Vert H_f\zeta_1\Vert=|H_f\zeta_1-p_1|$ and $\Vert q\zeta_1\Vert=|q\zeta_1-r_1|$
	for integers $p_1, r_1$,
	then we must have $q/H_f=r_1/p_1=P$, i.e. $P(H_f,p_1)=(q,r_1)$. But then from \eqref{eq:HG} we get
	\[
	\Vert q \uz\Vert \geq \Vert q\zeta_1\Vert= P \Vert H_{f}\zeta_1\Vert \geq \Vert H_{f}\zeta_1\Vert\gg H_f^{-(\gamma_1-1)}\geq q^{ -(\gamma_1-1)},
	\]
	contradicting again \eqref{eq:tauid} for large $f$ (or equivalently $q$).
\end{proof}

Recall $\beta=\frac{1+\sqrt{5}}{2}$ and
let $\lambda\in (\beta,\infty)$ be given. Let
\[
\gamma_1= \lambda +1, \qquad \gamma_2\in (\lambda+1, \min\{ m\lambda, \lambda^2\})\neq \emptyset.
\]
Then \eqref{eq:dno}, \eqref{eq:leilei} hold.
Consider the derived sets $\mathcal{Q}_i$ and $\mathcal{Q}_1^{\ast}$, and
let $\mathcal{Q}:= \mathcal{Q}_1^{\ast}\times \prod_{i=2}^{m} \mathcal{Q}_i\subseteq \Rm$ for simplicity.
By Lemma~\ref{mitp} we have $\mathcal{Q}\subseteq \mathcal{W}_m(\lambda)$ 
and by Lemma~\ref{mure} and
since $\mathcal{Q}\subseteq \prod_{i=1}^{m} \mathcal{Q}_i$ we have
$\mathcal{Q}\subseteq
\textbf{F}_{m,c}$ again. It is further clear from the proof of Theorem~\ref{hdd} that for $\mathcal{Q}$ the bounds \eqref{eq:01}, \eqref{eq:02} still apply, as condition (iv) is metrically negligible. 
Hence, we get
a positive Hausdorff dimension of our set \eqref{eq:iset}.
For the asymptotical estimate as $\lambda\to\infty$, for $\gamma_2$
we may ignore the larger bound $\lambda^2$ and let $\gamma_2= m\lambda-o(1)$. Inserting in \eqref{eq:01} we observe that
for large $\lambda$ the right term in the minimum is smaller, and 
identifying main terms and estimating lower order terms gives 
$i/(\lambda m)-m^{-1}\lambda^{-1}(1+o(1))$ 
as a lower estimate for $3\leq i\leq m$, which sums up to $m/(2\lambda)-O(\lambda^{-1})$, 
the claimed asymptotical
bound. Theorem~\ref{beides} is proved.

\subsection{Generalizations}  \label{gen}
We sketch how to modify the construction of~\S~\ref{tem} to
get the refined claims on the exact order of 
ordinary approximation indicated below Theorem~\ref{beides}.
Let $\Psi(t)$ be any approximation function of decay
$o(t^{-\beta-\epsilon})$ as $t\to\infty$.
We alter (iv) from~\S~\ref{pbeides} by prescribing at step $n$
simultaneously the binary digits $g_{i,j}=g_j$ of all $\xi_i$, $1\leq i\leq m$,
at positions $j\in J_n:=\{ \lceil \gamma_1(n) h_n\rceil,
\lceil \gamma_1(n) h_n\rceil+1, \ldots, \lceil \gamma_1(n) h_n\rceil+n\}$,
where we put
\[
\gamma_1(n)=\left\lfloor \frac{\log \Psi(H_n)}{ \log H_n}\right\rfloor+1,\qquad 
\gamma_2(n)\in (\gamma_1(n), \min\{ m(\gamma_1(n)-1), (\gamma_1(n)-1)^2\})\neq \emptyset.
\] 
Thereby we obtain a subset $\tilde{\mathcal{Q}}$ of
$\prod \mathcal{Q}_i$ from~\S~\ref{putz} again, but with parameters
$\gamma_i$ depending on $n$.
Because of $|J_n|=n$, we can prescribe  
$\Vert H_n\uz\Vert/\Psi(H_n)$ up to a factor $1+2^{-n+1}$ 
at step $n$ by choosing digits $g_{i,j}$ suitably (mimicking the binary expansion of $\Psi(H_n)$ in $J_n$), so as $n\to\infty$
indeed we get a factor $1+o(1)$. 
The arguments from proof of Lemma~\ref{mitp}
further show that these values represent the local mimima 
of $\psi_{\uz}(Q)/\Psi(Q)$. Thus $\tilde{\mathcal{Q}}$ is contained in $\mathcal{W}_m(\Psi)\cap \textbf{F}_{m,c}$.
Moreover, the intervals $J_n$ are short enough not to
affect the asymptotics \eqref{eq:GS0}, \eqref{eq:GS} for every $n$
and $\gamma_{i}(n)$.
Given explicit lower and upper bounds for $-\log \Psi(t)/ \log t$,
we obtain intervals for $\gamma_i(n)$ uniformly for $n\ge 1$,
and may again infer metrical claims with some ``dynamical
variant'' of Lemma~\ref{ohlele}.
Assuming $\Psi(t)= t^{-\lambda+o(1)}$ 
for some $\lambda\in (\beta,\infty)$
as $t\to\infty$, the exact same
estimates \eqref{eq:01}, \eqref{eq:02} can be deduced and we
choose $\gamma_1= \lambda+1, \gamma_2=m\gamma_1-o(1)$ again
for optimization.
We leave the details to the reader.


Finally we sketch how to argue when $1<\lambda\le \beta$.
We let $\gamma_1=\lambda+1>2$ again and $\gamma_2>\gamma_1$ sufficiently
close to $\gamma_1$.
Then we fix the last coordinate
\[
\zeta_m= \sum_{n=1}^{\infty} 2^{-h_{n}} + \sum_{n=1}^{\infty} 2^{-\lfloor \gamma_1 h_n\rfloor },
\]
and take the other $\zeta_i\in\mathcal{Q}_i=\mathcal{Q}_i(\gamma_1,\gamma_2)$, $1\leq i\leq m-1$, as 
defined above with binary digit $0$ in intervals of types (i), (ii), (iii).
Note that $q=H_n=2^{h_n}$ and $q=2^{\lfloor \gamma_1 h_n\rfloor}\asymp H_n^{\gamma_1}$ induce small values
$\Vert q\zeta_m\Vert$. Conversely,
it can be deduced from the ``Folding Lemma'' by the same line of arguments
as in~\cite{bug2008} that we can only have
$\Vert q\zeta_m\Vert< q^{-(\gamma_1-1)-\epsilon}$ and hence
\eqref{eq:tauid}, if $q$ is a multiple 
of integers of these forms. To exclude these cases, we
can use a similar strategy as in the proof of $(C2)$ in~\S~\ref{proof} 
involving some case distinctions, assuming $\gamma_2>\gamma_1$ was chosen sufficiently close to $\gamma_1=\lambda+1$. For the Hausdorff dimensions
of our Cantor type sets of $\uz\in\textbf{F}_{m,c}\cap \mathcal{W}_m(\lambda)$
we get a lower bound 
$\dim_H(\prod_{i=1}^{m-1} \mathcal{Q}_i\times \{ \zeta_m\})\geq \sum_{i=1}^{m-1} \dim_H(\mathcal{Q}_i)>0$. We omit the technical
details again.

\section{Proof of Theorem~\ref{2}}

\subsection{The case of good $K$}  \label{6.1}
We restrict to $m\geq 3$, for $m=2$ we alter accordingly to \S~\ref{ss}.
If $K$ is good, then very similarly as in~\S~\ref{met} we 
can take $a_j=b^{c_j}$
for an increasing sequence of integers $c_j$ in the construction in \S~\ref{ko}, up to redefining $L_n=b^{\ell_n}$ as the smallest power
of $b$ so that $a_{mn+1}^{-1}< \Phi(Q^{\prime})$
for any $Q^{\prime}\leq b^{\ell_n}a_{mn+1}$. The analogue
of Proposition~\ref{uv} is easily checked as well in our setting.
The proof  of $(C3), (C1)$ 
works identically as for Theorem~\ref{F}. The proof of claim \eqref{eq:claim}
further works analogously
as Theorem~\ref{F} up to \eqref{eq:kkk}. Below, now 
we have to take $Q^{\prime}\leq bL_fa_{mf+m-1}$ in place of $Q^{\prime}\leq (L_f+1)a_{mf+m-1}$.
Hence
$\alpha_f\leq bL_f/L_f=b$. Then the decay
condition $(d3^{\prime}(b,R))$ yields a factor $R$ in place
of $1-\epsilon_2$ and (i) follows
with $\Omega=R$. 

For (ii), first recall that we may relax assumption \eqref{eq:h} 
to $a_{mn+1}$ being
just a large enough multiple, in place
of power, of $a_{mn}$.
This corresponds to $c_{mn}<c_{mn+1}$ in place of $c_{mn}|c_{mn+1}$, so
that we can choose $A, B$ in $(D(b))$ freely. 
As remarked above, an according variant of
Proposition~\ref{uv} holds in our setting $a_j=b^{c_j}$ as well.
Then with large $A,B$ as in $(D(b))$
we may choose $c_{mn+1}=A_n=A$ and $c_{mn+m-1}+\ell_n=B_n=B$, i.e.
$\ell_n=B_n-A_n$, in step $n$ 
so that the quotient $\Phi(b^B)/b^{-A}>1$ is arbitrarily
close to $1$ again. Hence we may choose $\alpha_f=Q^{\prime}/(L_fa_{mf+m-1})>1$ arbitrarily close to $1$ again, and 
consequently get the same result as in Theorem~\ref{F}.

The bound on the packing dimension for $K=C_{b,\{0,1\}}^m$
follows similarly as for $\Rm$. We
now instead have that the sumset $(K\cap \mathcal{V}) + (K\cap \mathcal{S})$ with $\mathcal{V}=\mathcal{V}_{m}^{(b)}(\mu-\varepsilon)$ 
as in~\S~\ref{met} but for general $b\geq 2$ 
contains $K$, and we conclude with
Tricot's estimate again. Here we use the general version of~\cite[Lemma~5.6]{ichneu} to bound the Hausdorff
dimension of $K\cap \mathcal{V}$ from above. See also the very similar proof of~\cite[Theorem~4.1]{ichneu}. A positive lower
bound for the Hausdorff dimension follows from very similar arguments as in~\S~\ref{putz}. We need to replace $P_n$ in \eqref{eq:GS0} resp. \eqref{eq:GS} by 
$P_n\asymp H_n^{(\gamma_2-\gamma_1)\log 2/\log b}$ 
resp. $P_{2n}\asymp H_n^{(\gamma_2-\gamma_1)\log 2/\log b}$ and 
$P_{2n+1}\asymp H_n^{(i-2)\gamma_2\log 2/\log b}$ 
and keep $\epsilon_n$ unchanged, we omit the slightly cumbersome calculation.

\subsection{General case}

We may assume $|W_i|=2$ for $1\leq i\leq m$. Then we have the identity of sets
\begin{equation}  \label{eq:cant}
C_{b,W_i}= w_{i,1}C_{b,\{0,1\}}+ \frac{w_{i,2}}{b-1} 
\end{equation}
for $w_{i,1}\geq 1, w_{i,2}\geq 0$ integers with $w_{i,1}+w_{i,2}\leq b-1$, where $W_i=\{w_{i,2}, w_{i,1}+w_{i,2}\}$. We use this 
identity to reduce the general case to the special case 
of good $K$.

Let $\Phi$ be any function satisfying $(d1), (d2), (d3^{\prime}(b,R))$ 
for some $R\in (0,1)$. From $(d1)$ we get the weaker condition
\[
\Phi(t)< (b-1)^{2}R^{-1} \cdot t^{-1/m}, \qquad t\geq t_0.
\]
Define the auxiliary function
\[
\tilde{\Phi}(t)= (b-1)^{-2}R\cdot \Phi(t),
\]
which by construction satisfies $(d1), (d2), (d3^{\prime}(b,R))$, the assumptions
of Theorem~\ref{2}.
Start with $Q^{\ast}>1$ an arbitrary, large number and let $Q=Q^{\ast}/b$.
Now, in \S~\ref{6.1} we showed there exists
$\ut=(\theta_1,\ldots,\theta_m)\in \tilde{K}:= C_{b,\{0,1\}}^m$ that
satisfies (i) of Theorem~\ref{2} with respect to the good $\tilde{K}$
and $\tilde{\Phi}$. In particular the system
\begin{equation}  \label{eq:hurra}
|q\theta_i-p_i| \leq \tilde{\Phi}(Q), \qquad\qquad 1\leq i\leq m,\; 1\leq q\leq Q,
\end{equation}
has a solution in integers $q, p_i$. Then, according to \eqref{eq:cant}, write
\begin{equation}  \label{eq:ff}
\xi_i= w_{i,1} \theta_i + \frac{w_{i,2}}{b-1}, \qquad\qquad 1\leq i\leq m,
\end{equation}
so that $\underline{\xi}=(\xi_1,\ldots, \xi_m)\in K$. 
We claim it satisfies the properties $(C1), (C2), (C3)$ with respect to $\Phi$. 
From \eqref{eq:hurra} we see that with
\[
q^{\ast}= (b-1)q, \qquad p_i^{\ast}= (b-1)p_i + qw_{i,2}
\]
we have
\[
|q^{\ast} \xi_i - p_i^{\ast} | \leq (b-1) w_{i,1} \tilde{\Phi}(Q)\leq (b-1)^2 \tilde{\Phi}(Q), \qquad 1\leq i\leq m.
\]
Since $Q^{\ast}= (b-1)Q$, from $(d3^{\prime}(b,R))$ applied
to $Q^{\ast}/b$ and since
$\tilde{\Phi}$ decays we see
\[
\tilde{\Phi}(Q)= \tilde{\Phi}(Q^{\ast}/(b-1)) \leq \tilde{\Phi}(Q^{\ast}/b)\leq R^{-1}\tilde{\Phi}(Q^{\ast})
\]
and combining we have a solution to
\[
1\leq q^{\ast}\leq Q^{\ast},\qquad \Vert q^{\ast}\underline{\xi}\Vert \leq (b-1)^2R^{-1} \tilde{\Phi}(Q^{\ast})= \Phi(Q^{\ast}).
\]
Note that $Q^{\ast}>1$ was arbitrary, so $(C1)$ holds
for $\underline{\xi}\in K$. 

Now define another function $\Psi(t)= v\tilde{\Phi}(t)$ for $v>0$ be to determined later. Assume conversely that $\underline{\xi}$ as 
in \eqref{eq:ff} satisfies
\begin{equation}  \label{eq:fal}
1\leq q^{\ast} \leq Q^{\ast}, \qquad
|q^{\ast} \xi_i - p_i^{\ast} | \leq \Psi(Q^{\ast}), \qquad 1\leq i\leq m,
\end{equation}
holds for some large $Q^{\ast}$. 
From \eqref{eq:ff} we see
that for
\begin{equation}  \label{eq:sit}
q_i= (b-1)w_{i,1}q^{\ast}, \qquad p_i= (b-1)p_i^{\ast}-q^{\ast}w_{i,2}
\end{equation}
we have
\begin{equation}  \label{eq:prei}
|q_i \xi_i- p_i| \leq (b-1)\Psi(Q^{\ast}), \qquad 1\leq i\leq m.
\end{equation}
We take $q$ the lowest common multiple of the $q_i$, for which
since $w_{i,1}\leq b-1$ an estimate is given by
\begin{equation}  \label{eq:eip}
q\leq \Gamma_{b}(b-1)q^{\ast}, \qquad \Gamma_{b}= lcm(1,2,\ldots,b-1).
\end{equation}
Note that we may sharpen this by taking the maximum least common 
multiple of any $m$ integers at most $b-1$, as claimed in Remark~\ref{reh}.
Now for $\tilde{p}_i= (q/q_i)p_i\in\mathbb{Z}$ and $Q= (b-1)\Gamma_{b} Q^{\ast}$, we have
$1\leq q\leq Q$ and combining \eqref{eq:prei}, \eqref{eq:eip} and \eqref{eq:sit} we infer
\begin{align*}
|q\theta_i- \tilde{p}_i| &= \frac{q}{q_i}|q_i \theta_i- p_i| \leq \frac{q}{q_i} (b-1)\Psi(Q^{\ast})\leq
\Gamma_{b}(b-1)^2\frac{q^{\ast}}{q_i}\Psi(Q^{\ast})\\
&\leq
\Gamma_{b}(b-1)w_{i,1}^{-1}\Psi(Q^{\ast})
\leq \Gamma_{b}(b-1)\Psi(Q^{\ast}).
\end{align*}
Now since $\Psi$ satisfies $(d3^{\prime}(b,R))$ as well, 
estimating trivially
$(b-1)\Gamma_{b}\leq b^{ b}$ (see Remark~\ref{reh} for 
asymptotical improvements) and repeated application 
and monotonicity of $\Psi$ shows 
\[
\Psi(Q) \geq \Psi(b^{b} Q^{\ast}) \geq  R^{b} \Psi(Q^{\ast}).
\]
Inserting, for $1\leq i\leq m$ we have
\begin{align*}
1\leq q\leq Q, \qquad
|q\theta_i- \tilde{p}_i| \leq 
(\Gamma_{b}(b-1)R^{-b})\Psi(Q)=
(\Gamma_{b}(b-1)R^{-b}v)\tilde{\Phi}(Q),
\end{align*}
in other words
\[
1\leq q\leq Q, \qquad \Vert q\ut\Vert <(\Gamma_{b}(b-1)R^{-b}v)\cdot \tilde{\Phi}(Q).
\]
On the other hand, by assumption 
our $\ut\in \tilde{K}$ satisfies
$(C2^{\prime})$ of Theorem~\ref{2} with respect to $\tilde{\Phi}$ and $\Omega=R$. In other words, for certain $Q^{\ast}$ and the induced $Q=\Gamma_{b} Q^{\ast}$, we have $\Vert q\ut\Vert>R\tilde{\Phi}(Q)$ for all $1\leq q\leq Q$.
If $v<\Gamma_{b}^{-1}(b-1)^{-1}R^{b+1}$ we
get a contradiction. Hence the assumption \eqref{eq:fal} was false and
for these $Q^{\ast}, v$ and for all $1\leq q^{\ast}\leq Q^{\ast}$
we have
\[
\Vert q^{\ast} \underline{\xi}\Vert > \Psi(Q^{\ast}) = v\tilde{\Phi}(Q^{\ast})= \frac{R}{(b-1)^2}v\Phi(Q^{\ast}),
\]
for any $1\leq q^{\ast}\leq Q^{\ast}$. Inserting the bound for $v$,
this means $\underline{\xi}$ satisfies $(C2^{\prime})$ for
\[
\Omega(b,R)= \Gamma_{b}^{-1}(b-1)^{-1}R^{b+1}\cdot \frac{R}{(b-1)^2}= 
\frac{R^{b+2} }{(b-1)^3\Gamma_{b}}.
\]
Finally $(C3)$ for $\underline{\xi}$ clearly follows from \eqref{eq:ff} as well when considering integers $(b-1)q$ for $q$
inducing the same property $(C3)$ for $\ut$.

\section{Proof of Corollary~\ref{ok} and Corollary~\ref{rok}} \label{8}

We verify property $(d3^{\prime}(b,R))$ for $R=b^{-1/m}$
for the Cantor set setting. 

\begin{lemma}  \label{lemuren}
	Let $m\geq 2, b\geq 2$ be integers and $c\in(0,1)$. Then for any 
	$\epsilon>0$ 
	there exist arbitrarily
	large integers $A,B$ with
	\[
	cb^{-B/m}b^{-1/m}\leq b^{-A} < cb^{-B/m}.
	\]
\end{lemma}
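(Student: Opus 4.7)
The plan is to take $\log_b$ of the target inequality $cb^{-B/m}b^{-1/m} \le b^{-A} < cb^{-B/m}$. Setting $\alpha := -\log_b c > 0$, the double inequality becomes the requirement that $A$ is an integer with
\[
\tfrac{B}{m}+\alpha \;<\; A \;\le\; \tfrac{B}{m}+\alpha+\tfrac{1}{m}.
\]
So I must produce arbitrarily large positive integers $B$ for which the half-open interval $\bigl(\tfrac{B}{m}+\alpha,\ \tfrac{B}{m}+\alpha+\tfrac{1}{m}\bigr]$ contains an integer (any such integer then serves as $A$, and $A\to\infty$ automatically when $B\to\infty$).

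The key observation is that the fractional part of $\tfrac{B}{m}+\alpha$ depends only on the residue of $B$ modulo $m$. Writing $B = qm+r$ with $r \in \{0,1,\ldots,m-1\}$ and setting $\beta_r := \bigl\{\tfrac{r}{m}+\alpha\bigr\}\in[0,1)$, the interval above contains an integer precisely when $\beta_r \ge 1-\tfrac{1}{m}$, in which case $A := \lfloor \tfrac{B}{m}+\alpha\rfloor + 1$ works.

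Finally, the $m$ values $\beta_0, \beta_1, \ldots, \beta_{m-1}$ are the residues modulo $1$ of $\alpha,\ \alpha+\tfrac{1}{m},\ \ldots,\ \alpha+\tfrac{m-1}{m}$; they form an arithmetic progression with common step $\tfrac{1}{m}$ on the unit circle, so they partition $[0,1)$ into $m$ sub-intervals of length $\tfrac{1}{m}$. In particular, exactly one index $r_0\in\{0,\ldots,m-1\}$ satisfies $\beta_{r_0}\in\bigl[1-\tfrac{1}{m},\,1\bigr)$. Taking $B := qm + r_0$ with $q \to \infty$ then produces arbitrarily large pairs $(A,B)$ of positive integers with the required property.

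The argument is elementary once the problem has been reformulated as an integer-location question in a half-open interval of length $1/m$, and I do not foresee a genuine obstacle. The only delicate point is the edge case $\beta_{r_0} = 1-\tfrac{1}{m}$ exactly, in which the upper endpoint $\tfrac{B}{m}+\alpha+\tfrac{1}{m}$ itself is an integer and hence serves as $A$, the inclusion being preserved by the closed right-hand side of the interval.
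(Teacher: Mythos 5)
Your argument is correct, but it takes a genuinely different (and more elaborate) route than the paper's. The paper's proof is a one-liner operating in the original multiplicative form: for each large $A$ it simply defines $B$ to be the \emph{largest} integer with $b^{-A}<cb^{-B/m}$; maximality then instantly gives $b^{-A}\ge cb^{-(B+1)/m}=cb^{-B/m}b^{-1/m}$, which is the left inequality, and as $A\to\infty$ the associated $B$ also tends to infinity. You instead pass to logarithms, reduce the problem to locating an integer in a half-open interval of length $1/m$, and then observe that the fractional part $\{\tfrac{B}{m}+\alpha\}$ is a periodic function of $B \bmod m$ taking $m$ equally-spaced values, exactly one of which lies in $[1-\tfrac1m,1)$; picking the corresponding residue class of $B$ finishes. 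Both proofs are valid; yours correctly handles the boundary case $\beta_{r_0}=1-\tfrac1m$ (where the right endpoint is the integer), but the extremal-choice trick in the paper avoids the equidistribution-of-fractional-parts detour entirely and is worth internalizing. One small structural remark applying to both: the ``$\epsilon>0$'' in the statement plays no role in this lemma (it is a vestige of the parallel Lemma~\ref{lemur}), and indeed neither proof uses it.
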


\begin{proof}
	It suffices to take $B$ the largest integer with $b^{-A}< cb^{-B/m}$.
	By maximility of $B$ the other inequality holds as well. 
	\end{proof}

The lemma states that $\Phi(q)=cq^{-1/m}$ satisfies $(d3^{\prime}(b,R))$ for any pair $(b,R)$
with an integer $b\geq 2$ and $R=b^{-1/m}$, and Corollary~\ref{ok}
follows from part (i) of Theorem~\ref{2}. 

\begin{lemma}  \label{lemur}
	Let $m\geq 2, b\geq 2$ be integers, $c\in(0,1)$ and
	$\tau>0$ irrational. Then for any 
	$\epsilon>0$ 
	there exist arbitrarily
	large integers $A,B$ with
	\[
	(1-\epsilon)cb^{-B\tau}<b^{-A}\leq cb^{-B\tau}.
	\]
\end{lemma}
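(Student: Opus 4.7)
The plan is to take logarithms in base $b$ to convert the lemma into a statement about approximating a multiple of an irrational number by an integer, and then invoke equidistribution.

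More concretely, applying $-\log_b$ to the desired inequality
\[
(1-\epsilon) c b^{-B\tau} < b^{-A} \leq c b^{-B\tau}
\]
shows that it is equivalent to finding integers $A, B$ satisfying
\[
B\tau - \log_b c - \delta \;<\; A \;\leq\; B\tau - \log_b c,
\]
where $\delta := \log_b\bigl(1/(1-\epsilon)\bigr) > 0$. Thus I am looking for an integer $A$ inside an interval of fixed positive length $\delta$ whose right endpoint is $B\tau - \log_b c$.

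The natural choice is to set $A := \lfloor B\tau - \log_b c \rfloor$, which automatically gives $A \leq B\tau - \log_b c < A+1$. The lower bound then reduces to the condition
\[
\{\, B\tau - \log_b c\, \} < \delta,
\]
where $\{\cdot\}$ denotes the fractional part. Since $\tau$ is irrational, the sequence $(B\tau)_{B\ge 1}$ is equidistributed modulo $1$ by Weyl's theorem, so the same holds for $(B\tau - \log_b c)_{B\ge 1}$. In particular there are infinitely many (hence arbitrarily large) positive integers $B$ with $\{B\tau - \log_b c\} < \delta$, and for each such $B$ the integer $A$ defined above satisfies both required inequalities. Finally, as $B \to \infty$ we have $A \asymp B\tau \to \infty$ as well, so $A$ is also arbitrarily large.

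There is essentially no obstacle beyond identifying that the problem reduces to equidistribution of $\{B\tau\}$, for which irrationality of $\tau$ is the critical hypothesis (and indeed explains why the rational case $\tau = r/s$ was excluded in the discussion of $(D(b))$ preceding the lemma). Mere density of $\{B\tau\}$ mod $1$ (Kronecker's theorem) is enough; one does not need the full strength of Weyl equidistribution here.
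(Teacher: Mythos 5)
Your approach — take logarithms, reduce to a density/equidistribution statement for $\{B\tau\}$ modulo $1$ — is exactly the paper's, and the reduction is the right one. However, there is a sign slip in the translation of the original inequality that then propagates into the choice of $A$. Starting from $b^{-A}\leq c\,b^{-B\tau}$ and dividing by $b^{-B\tau}$ gives $b^{B\tau-A}\leq c$, i.e.\ $B\tau-A\leq\log_b c$, i.e.\ $A\geq B\tau-\log_b c$. Similarly the strict lower bound $(1-\epsilon)cb^{-B\tau}<b^{-A}$ gives $A<B\tau-\log_b c+\delta$ with $\delta=\log_b(1/(1-\epsilon))$. So the correct target is
\[
B\tau-\log_b c \;\leq\; A \;<\; B\tau-\log_b c+\delta,
\]
not the reversed window you wrote. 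With your choice $A=\lfloor B\tau-\log_b c\rfloor$ one generically has $A<B\tau-\log_b c$, so $b^{-A}>cb^{-B\tau}$, which violates the required upper bound.

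The fix is one line: take instead $A=\lceil B\tau-\log_b c\rceil$, so that $A\geq B\tau-\log_b c$ automatically, and then $A<B\tau-\log_b c+\delta$ reduces to $1-\{B\tau-\log_b c\}<\delta$, i.e.\ $\{B\tau-\log_b c\}>1-\delta$. Density of $(B\tau)_{B\ge1}$ modulo $1$ (Kronecker; equidistribution is indeed overkill, as you noted) supplies arbitrarily large such $B$, and then $A\asymp B\tau\to\infty$ as you argued. With that correction your proof is sound and, modulo the parametrization by $B$ alone versus the paper's appeal to density of $\{A-B\tau:A,B\in\mathbb{Z}\}$ in $\mathbb{R}$, it is the same argument.
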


\begin{proof}
	Taking logarithms the claim becomes
	\[
	0 <  (A-B\tau)\log b - \log c < -\log (1-\epsilon).
	\]
	Since $\tau$ is irrational,
	the set of values of $A-\tau B$ and thus also of
	 $(A-B\tau)\log b-\log c$ when taking all integer pairs 
	$A,B$ are dense in $\mathbb{R}$. The claim follows.
\end{proof}

From the lemma we see that $\Phi(q)=cq^{-\tau}$ for $\tau$ irrational satisfies $(D(b))$ for any integer
$b\geq 2$, and part (ii) of Theorem~\ref{2} implies Corollary~\ref{rok}.

Assume we are given an effective upper bound for the irrationality exponent of $\tau$. Then,
using a result on uniform inhomogeneous approximation 
due to Bugeaud and Laurent~\cite{buglau},
we could state upper bounds for the smallest $A,B$ satisfying the hypothesis of Lemma~\ref{lemur} in terms of $\epsilon$, independent from $c$. Similar to the remark below
Theorem~\ref{F}, this in turn implies an effective rate  
at which we can let $\varepsilon\to 0$ in terms of $Q$
in Corollary~\ref{rok}, if we admit a factor $1+\varepsilon$
in condition $(C1)$. We do not make this explicit here.

\section{Appendix: The linear form problem}

Let $\scp{.,.}$ be the standard scalar product on $\Rm$ and for $\underline{y}=(y_1,\ldots,y_m)\in \Rm$ denote by
$|\uy|_{\infty}= \max_{1\leq i\leq m} |y_i|$ the maximum
norm. Recall further the notation $\Vert .\Vert$ introduced in \S~\ref{s1.1}.
For $c^{\ast}\in(0,1]$, let $Di_m^{\ast}(c^{\ast})$ be the set of $\ux\in\Rm$ for which the system
\begin{equation}  \label{eq:linF}
0<|\uy|_{\infty} \leq Q^{\ast}, \qquad
\Vert \scp{\uy,\ux}\Vert \leq c^{\ast}Q^{\ast -m}
\end{equation}
has a solution in an integer vector $\uy$, for all large 
parameters $Q^{\ast}$.
By a variant of Dirichlet's Theorem we have $Di_m^{\ast}(1)=\Rm$ for any $m\geq 1$. Let again $Di_m^{\ast}= \cup_{c<1} Di_m^{\ast}(c)$.
Let $Bad_m^{\ast}$ be the set of badly approximable 
linear forms, its defining property being
that for some $c^{\ast}>0$ and all $Q^{\ast}$ there is no integer vector solution to \eqref{eq:linF}. For completeness further define accordingly
$Sing_m^{\ast}= \cap_{c>0} Di_m^{\ast}(c)$ and $\boldsymbol{FS}_m^{\ast}= Di_m^{\ast}\setminus (Bad_m^{\ast}\cup Sing_m^{\ast})$. We point out the well-known identities
\[
Di_m=Di_m^{\ast} , \qquad Sing_m=Sing_m^{\ast}, \qquad Bad_m=Bad_m^{\ast}, \qquad \boldsymbol{FS}_m=\boldsymbol{FS}_m^{\ast}.
\]
Note however that the sets $Di_m(c)$ and $Di_m^{\ast}(c)$ do not coincide
for the same parameter $c<1$.
We expect analogous
results to~\S~\ref{se2.2}, in particular counterparts 
of Theorem~\ref{A} and Theorem~\ref{hdd}, 
for the linear form setting.
From Corollary~\ref{c} and a transference result phrased below, we obtain
the following more modest claim.

\begin{theorem} \label{Fr}
	For $m\geq 2$ an integer and any $c^{\ast}\in(0,1]$, 
	if we let 
	\[
	\omega=\omega(m,c^{\ast}):= (m+1)^{-m^2-m}(c^{\ast})^{m^2} \in (0,c^{\ast}), 
	\]
	then the set
	\[
	Di_m^{\ast}(c^{\ast})\setminus (Di_m^{\ast}(\omega) \cup Bad_m^{\ast})
	\subseteq \boldsymbol{FS}_m^{\ast} 
	\]
	has packing dimension at least $m-1$ and Hausdorff dimension
	at least as in Theorem~\ref{hdd}.
\end{theorem}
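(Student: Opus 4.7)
The plan is to deduce Theorem~\ref{Fr} by combining Corollary~\ref{c} with a sharp Khintchine-Mahler transference inequality relating the simultaneous and linear form Dirichlet improvable sets, of the form
\[
Di_m(c)\subseteq Di_m^{\ast}((m+1)c^{1/m}), \qquad Di_m^{\ast}(c^{\ast})\subseteq Di_m((m+1)(c^{\ast})^{1/m}),
\]
valid for every $c,c^{\ast}\in(0,1]$, together with the strict refinements $\Theta^{\ast}(\ux)<(m+1)\Theta(\ux)^{1/m}$ and $\Theta(\ux)<(m+1)\Theta^{\ast}(\ux)^{1/m}$ for every totally irrational $\ux\in\Rm$. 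Both follow from geometry of numbers: Mahler's inequality on polar lattices, applied to the natural lattice $\Lambda(\ux)=\{(\underline{p}-q\ux,q):q\in\mathbb{Z},\,\underline{p}\in\mathbb{Z}^{m}\}\subset\mathbb{R}^{m+1}$ and its polar, combined with the elementary inclusion of the box $\{|u_{i}|\leq 1/((m+1)T_{i})\}$ inside the polar cross-polytope $\{\sum T_{i}|u_{i}|\leq 1\}$ of the box $\prod_{i=1}^{m+1}[-T_{i},T_{i}]$; the sharp factor $m+1$ arises from this containment. An alternative derivation would proceed via Siegel's lemma or pigeonhole on the linear congruence $\sum y_{i}p_{i}\equiv 0\pmod{q}$ arising from a primal approximation $q\ux\approx\underline{p}$.

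Given $c^{\ast}\in(0,1]$, I set $c:=(c^{\ast}/(m+1))^{m}\in(0,1]$, chosen so that $(m+1)c^{1/m}=c^{\ast}$ and $(c/(m+1))^{m}=(m+1)^{-m^{2}-m}(c^{\ast})^{m^{2}}=\omega$. Applying Corollary~\ref{c} with this $c$, and invoking the underlying construction of Theorem~\ref{A} which realises its bound, produces an uncountable set $\mathscr{B}\subseteq Di_m(c)\setminus(\cup_{\epsilon>0}Di_m(c-\epsilon)\cup Bad_m)$ of packing dimension at least $m-1$, Hausdorff dimension at least as in Theorem~\ref{hdd}, and consisting of Liouville vectors by \eqref{eq:i}; in particular they are totally irrational. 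By the first transference inclusion, $\mathscr{B}\subseteq Di_m^{\ast}(c^{\ast})$. Since every $\ux\in\mathscr{B}$ satisfies $\Theta(\ux)=c$ exactly, the strict form of the second transference gives $\Theta(\ux)<(m+1)\Theta^{\ast}(\ux)^{1/m}$, whence $\Theta^{\ast}(\ux)>(c/(m+1))^{m}=\omega$, so $\mathscr{B}\cap Di_m^{\ast}(\omega)=\emptyset$. Combining with the classical transference identity $Bad_m=Bad_m^{\ast}$ yields $\mathscr{B}\subseteq Di_m^{\ast}(c^{\ast})\setminus(Di_m^{\ast}(\omega)\cup Bad_m^{\ast})$; the packing and Hausdorff dimension lower bounds transfer directly as they are intrinsic properties of the set $\mathscr{B}\subseteq\Rm$, independent of the primal or dual interpretation.

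The main technical obstacle is establishing the transference inequalities with the \emph{sharp} constant $m+1$: a direct application of Mahler's theorem introduces factors of $(m+1)!$, which would yield a drastically weaker value of $\omega$ inconsistent with the theorem statement. Achieving the sharp factor requires careful exploitation of the specific box-cross-polytope structure, or a direct combinatorial argument. A secondary concern is the strict form of the transference for totally irrational $\ux$, needed to exclude the boundary case $\Theta^{\ast}(\ux)=\omega$; while standard, it should be verified in this setting.
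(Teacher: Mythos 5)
Your high-level strategy (combine Corollary~\ref{c} with a Khintchine--Mahler transference between $Di_m$ and $Di_m^{\ast}$) is exactly the paper's, but the specific transference inequalities you posit are not correct, and this is not a cosmetic issue: the argument breaks down when the correct transference is substituted.

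The key problem is the claimed inclusion $Di_m^{\ast}(c^{\ast})\subseteq Di_m\bigl((m+1)(c^{\ast})^{1/m}\bigr)$. The transference from the dual Dirichlet constant to the simultaneous one loses an exponent $1/m^{2}$, not $1/m$. Concretely, starting from $0<|\uy|_{\infty}\leq X$, $\Vert\scp{\uy,\ux}\Vert\leq U$, Mahler/German produce an integer $x'$ with $|x'|\leq Y'$ and $\Vert x'\ux\Vert\leq V'$ where, up to constants, $Y'V'^{m}\asymp (X^{m}U)^{1/m}$. Writing the input quality as $X^{m}U=\tilde c$ and the output quality as $Y'V'^{m}=\tilde C^{m}$, one gets $\tilde C^{m}\asymp \tilde c^{1/m}$, i.e.\ $\tilde C\asymp\tilde c^{1/m^{2}}$. (This is exactly what the paper's Theorem~\ref{og}(ii), German's transference, yields.) In the same vein, your first inclusion $Di_m(c)\subseteq Di_m^{\ast}((m+1)c^{1/m})$ has the wrong power too: the sharp form is \emph{linear} in $c$, namely $c^{\ast}\asymp c$ (German gives $c^{\ast}=(m+1)^{1/2+1/(2m)}c$). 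Your version happens to be weaker hence still true (since $c^{1/m}\geq c$ for $c\leq 1$), but building symmetric $1/m$-power inequalities obscures the genuine asymmetry, and no amount of ``careful exploitation of the box--cross-polytope structure'' or Siegel's lemma will produce a $1/m$-power inclusion in the $Di_m^{\ast}\to Di_m$ direction, because it is simply not how the successive minima transfer.

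Because of this, the concrete contradiction you rely on fails. With your choices $c=(c^{\ast}/(m+1))^{m}$ and $\omega=(c/(m+1))^{m}$, an application of the \emph{correct} transference to $\ux\in Di_m^{\ast}(\omega)$ gives $\ux\in Di_m(\tilde C)$ with $\tilde C\asymp\omega^{1/m^{2}}\asymp(m+1)^{-1-1/m}c^{\ast}$, which is \emph{larger} than your $c=(c^{\ast})^{m}(m+1)^{-m}$ for all $c^{\ast}\in(0,1]$ (e.g.\ for $m=2$ it is of size $\approx 0.29\,c^{\ast}$ versus $(c^{\ast})^{2}/9$). So there is no contradiction and $\ux\notin Di_m^{\ast}(\omega)$ does not follow. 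The fact that your final $\omega=(m+1)^{-m^{2}-m}(c^{\ast})^{m^{2}}$ matches the theorem's statement is a coincidence of two compensating errors (a spurious $m$-th power in the $Di_m\to Di_m^{\ast}$ direction, and an $m$-th power where an $m^{2}$-th power is needed in the other direction). To repair the argument you should: set $c\asymp c^{\ast}$ (linearly, via German's Theorem~\ref{og}(i)), and derive $\omega$ from $\omega\asymp(c/\mathrm{const})^{m^{2}}$ via Theorem~\ref{og}(ii); this is what the paper does, and it is where the $(c^{\ast})^{m^{2}}$ in $\omega$ actually comes from --- a single $m^{2}$-th power loss, not two $m$-th power losses. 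Finally, you invoke a ``strict'' version of the transference for totally irrational $\ux$ without proof; the paper instead avoids this by working with $Di_m(c)\setminus\bigcup_{\epsilon>0}Di_m(c-\epsilon)$ and extracting the strictness from $\tilde C<c$ there, which is cleaner and does not require an unproved boundary claim.
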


As $m\to\infty$, the value $\omega(m,c^{\ast})$ asymptotically satisfies
\[
\omega(m,c^{\ast}) > (c^{\ast})^{m^2}e^{-m^2 \log m-o(m^2 \log m)}. 
\]
Our result 
should be compared with the following partial claim of~\cite[Theorem~1.5]{beretc} (see
also~\cite{marnat})
obtained from a very different, unconstructive method.

\begin{theorem}[Beresnevich, Guan, Marnat, Ram\'irez, Velani] \label{bt}
	Let $m\geq 2$ an integer and 
	\[
	\kappa_m = e^{-20(m+1)^3(m+10)}.
	\]
	Then the set 
	\[
	Di_m^{\ast}(c^{\ast})\setminus (Di_m^{\ast}(\kappa_mc^{\ast}) \cup Bad_m)
	\subseteq \boldsymbol{FS}_m^{\ast}
	\]
	is uncountable.
\end{theorem}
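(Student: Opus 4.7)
The plan is to combine Corollary~\ref{c} (together with the Hausdorff strengthening in Theorem~\ref{hdd}) with a classical Khintchine--Mahler transference principle relating simultaneous and linear-form Diophantine approximation, using in addition the identity $Bad_m = Bad_m^*$ recorded just above the statement; the latter lets us handle the non-badly-approximable aspect for free, so only the two Dirichlet-type conditions need to be transferred quantitatively.

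First I would fix $c^* \in (0, 1]$, introduce an auxiliary parameter $c = c(c^*, m) \in (0, 1]$ to be determined at the end, and apply Corollary~\ref{c} to $c$ to produce a set
\begin{equation*}
\mathscr{E} \;\subseteq\; Di_m(c) \setminus \bigl( \cup_{\epsilon > 0}\, Di_m(c - \epsilon) \,\cup\, Bad_m \bigr)
\end{equation*}
of packing dimension at least $m - 1$. The corresponding Hausdorff bound is inherited from the subset $\textbf{F}_{m,c} \setminus Bad_m \subseteq \mathscr{E}$ via Theorem~\ref{hdd}.

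Next I would translate each of the two Dirichlet conditions on $\ux \in \mathscr{E}$ into linear-form conditions by transference. For a real vector $\ux \in \Rm$ and a parameter $Q > 0$ consider the parametric convex body
\begin{equation*}
\mathcal{C}_{\ux}(Q) \;=\; \{ (q, q\xi_1 - p_1, \ldots, q\xi_m - p_m) \in \R^{m+1} : |q| \leq Q,\ |q\xi_i - p_i| \leq c\, Q^{-1/m} \},
\end{equation*}
whose first successive minimum being at most $1$ encodes precisely $\ux \in Di_m(c)$ at scale $Q$. Mahler's reciprocity, combined with Minkowski's second theorem applied to $\mathcal{C}_{\ux}(Q)$ and its polar, produces quantitative relations between the simultaneous and linear-form successive minima; after the standard rescaling $Q \leftrightarrow Q^*$, these yield explicit inclusions
\begin{equation*}
Di_m(c) \;\subseteq\; Di_m^*(\phi(c, m)), \qquad \Rm \setminus \bigl(\cup_{\epsilon > 0}\, Di_m(c - \epsilon)\bigr) \;\subseteq\; \Rm \setminus Di_m^*(\psi(c, m)),
\end{equation*}
with functions $\phi, \psi$ that accumulate a factor of order $(m+1)^{m+1}$ at each invocation of the Minkowski product-of-minima bound. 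Choosing $c$ so that $\phi(c, m) = c^*$ and setting $\omega := \psi(c, m)$, bookkeeping of the constants yields the stated value $\omega = (m+1)^{-m^2 - m}(c^*)^{m^2}$; the exponent $m^2$ reflects the fact that the transference is applied twice (once to secure the upper bound $c^*$ and once to secure the lower bound $\omega$), each step contributing a factor of order $m$ in the Dirichlet constant. Every $\ux \in \mathscr{E}$ thus lies in $Di_m^*(c^*) \setminus Di_m^*(\omega)$, and since $\mathscr{E} \cap Bad_m^* = \mathscr{E} \cap Bad_m = \emptyset$ we conclude $\mathscr{E} \subseteq Di_m^*(c^*) \setminus (Di_m^*(\omega) \cup Bad_m^*)$, so that the packing and Hausdorff dimension bounds transfer.

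The main obstacle is the careful bookkeeping required to land on the exact form $\omega = (m+1)^{-m^2 - m}(c^*)^{m^2}$. The rescaling between $Q$ and $Q^*$ is asymmetric (typically $Q^* \asymp Q^{1/m}$ or its inverse depending on direction), so the two transfers compose nontrivially; moreover one must verify that a single choice of $c$ simultaneously realises $\phi(c, m) = c^*$ and $\psi(c, m) = \omega$ in the stated closed form, without accumulating additional factors that would weaken the constant.
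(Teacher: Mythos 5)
You have conflated two distinct theorems in the Appendix. The statement you were asked to prove is Theorem~\ref{bt}, which is \emph{quoted} from Beresnevich, Guan, Marnat, Ram\'irez and Velani~\cite{beretc} and is not proved in the present paper at all; the paper explicitly attributes it and notes it was obtained by an entirely different method (Roy's parametric geometry of numbers, which the author describes as ``deep'' and ``unconstructive''). What your outline actually reproduces is the paper's proof of Theorem~\ref{Fr}, the author's \emph{own} transference-based result, whose constant is $\omega(m,c^{\ast}) = (m+1)^{-m^2-m}(c^{\ast})^{m^2}$, not $\kappa_m c^{\ast}$ with $\kappa_m = e^{-20(m+1)^3(m+10)}$.

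The distinction is not cosmetic. In Theorem~\ref{bt} the quantity $\kappa_m$ is \emph{independent of} $c^{\ast}$, whereas your transference argument (Corollary~\ref{c} plus German's theorem applied in both directions) unavoidably produces a constant depending on $c^{\ast}$ with exponent $m^2$, since each of the two transfers inflates the Dirichlet constant by a power close to $m$. The paper itself draws attention to this difference (``the value $\kappa_m$ in Theorem~\ref{bt} is independent from $c^{\ast}$'') and to the fact that the resulting bounds are not comparable: Theorem~\ref{Fr} is stronger only when $c^{\ast}$ is not too small (roughly $c^{\ast} > e^{-m^2}$). When $c^{\ast}$ is small, $\omega(m,c^{\ast}) \leq \kappa_m c^{\ast}$, so $Di_m^{\ast}(c^{\ast})\setminus Di_m^{\ast}(\kappa_m c^{\ast})$ is a \emph{subset} of $Di_m^{\ast}(c^{\ast})\setminus Di_m^{\ast}(\omega)$ and uncountability of the larger set says nothing about the smaller one. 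Hence your argument, even fully carried out, does not yield Theorem~\ref{bt} for all $c^{\ast}\in(0,1]$; it yields Theorem~\ref{Fr}, which you should compare against the paper's Appendix proof of that result via German's transference theorem (Theorem~\ref{og}) rather than against Theorem~\ref{bt}.
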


We should remark
that in the statement we suppressed some more information on other
exponents of approximation given in~\cite[Theorem~1.5]{beretc}.
Moreover, the exact shape of the polynomial in the exponent of $\kappa_m$, in particular the leading coefficient 20, can be readily optimized with sharper estimates at certain places in~\cite{beretc}.
Note that in contrast to our result, the value $\kappa_m$ 
in Theorem~\ref{bt} is independent from $c^{\ast}$.
We see that for large $m$ and large enough $c^{\ast}\in(0,1]$,
roughly  as soon as $c^{\ast } > e^{-m^2}$, our 
bound from Theorem~\ref{Fr} is stronger. For $c^{\ast}$ very 
close to $1$,
we basically can reduce the quartic polynomial in $m$ 
within the exponent in $\kappa_m$ to a quadratic polynomial.

For the deduction of Theorem~\ref{Fr} it is convenient to apply a transference result by
German~\cite{german} based on geometry of numbers, which
improves on previous results by Mahler. Concretely,
we use the following special cases of~\cite[Theorem~7]{german}, where we implicitly include the upper estimate
$\Delta_d^{-1}\leq d^{1/2}$ from~\cite[\S~2]{german}
for the quantity $\Delta_d$ defined there, where $d=m+1$
in our situation.

\begin{theorem}[German]  \label{og}
Let $m\geq 1$ and $\ux\in\Rm$. Let $X, U$ be positive parameters. 
\begin{itemize}
	\item[(i)]  Let $x\in \mathbb{Z}$ and assume 
	\[
    0<|x|\leq X, \qquad \Vert \ux x\Vert \leq U.	
	\]
	Then there exists $\uy^{\ast}\in\mathbb{Z}^m$
	so that
	\[
	0< |\uy^{\ast}|_{\infty}\leq Y, \qquad \Vert\scp{\uy^{\ast},\ux}\Vert\leq V,
	\] 
	where 
	\[
	Y= (m+1)^{1/(2m)}X^{1/m}, \qquad V= (m+1)^{1/(2m)}X^{1/m-1}U.
	\]
	\item[(ii)] Let $\uy\in\mathbb{Z}^m$ and assume 
	\[
	0<|\uy|_{\infty}\leq X, \qquad \Vert\scp{\uy,\ux}\Vert\leq U.
	\]
	Then there exists $x^{\prime}\in\mathbb{Z}$
	so that
	\[
	0< |x^{\prime}|\leq Y^{\prime}, \qquad \Vert x^{\prime}\ux\Vert \leq V^{\prime}
	\] 	
	where 
	\[
	Y^{\prime}= (m+1)^{1/(2m)}XU^{1/m-1}, \qquad V^{\prime}= (m+1)^{1/(2m)}U^{1/m}.
	\]
\end{itemize}

\end{theorem}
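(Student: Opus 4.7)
The plan is to apply German's transference Theorem~\ref{og} pointwise to the vectors $\ux$ furnished by Corollary~\ref{c} and Theorem~\ref{hdd} in the simultaneous setting, translating their Diophantine properties into the linear-form setting. Since the transference acts as the identity on vectors, the dimension lower bounds transfer directly; the identification $Bad_m=Bad_m^{\ast}$ is classical. Concretely, set
\[
c \;:=\; (m+1)^{-(m+1)/(2m)}\,c^{\ast} \;\in\; (0,1],
\]
and let $\mathcal{A}:=Di_m(c)\setminus\bigl(\cup_{\epsilon>0} Di_m(c-\epsilon)\cup Bad_m\bigr)$. By Corollary~\ref{c}, $\mathcal{A}$ has packing dimension at least $m-1$; running the Cantor-type construction behind Theorem~\ref{hdd} with $c$ slightly shrunk (to absorb the $(1+\varepsilon)$-factor of $(C1')$ in Lemma~\ref{mure}) produces a subset of $\mathcal{A}$ carrying the Hausdorff bounds \eqref{eq:sinus2}--\eqref{eq:cosinus2} unchanged, since those bounds do not depend on $c$. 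It therefore suffices to prove $\mathcal{A}\subseteq Di_m^{\ast}(c^{\ast})\setminus(Di_m^{\ast}(\omega)\cup Bad_m^{\ast})$.

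For the upper transference, fix $\ux\in\mathcal{A}$ and a large $Q^{\ast}$. Put $Q:=(m+1)^{-1/2}(Q^{\ast})^m$. From $\ux\in Di_m(c)$ pick $x\in\mathbb{N}$ with $x\leq Q$ and $\|x\ux\|\leq cQ^{-1/m}$; feeding $X=Q$, $U=cQ^{-1/m}$ into Theorem~\ref{og}(i) yields $\uy^{\ast}\in\mathbb{Z}^m$ with $0<|\uy^{\ast}|_{\infty}\leq Q^{\ast}$ and, by direct substitution into the stated formulas for $Y,V$, $\|\langle\uy^{\ast},\ux\rangle\|\leq c^{\ast}(Q^{\ast})^{-m}$. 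Hence $\ux\in Di_m^{\ast}(c^{\ast})$.

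For the lower transference, I argue by contradiction: suppose $\ux\in Di_m^{\ast}(\omega)$. Given a large $Q$, choose real $Q^{\ast}$ so that $Y':=(m+1)^{1/(2m)}\omega^{(1-m)/m}(Q^{\ast})^m$ equals $Q$, and take an integer vector $\uy$ with $0<|\uy|_{\infty}\leq Q^{\ast}$ and $\|\langle\uy,\ux\rangle\|\leq\omega(Q^{\ast})^{-m}$, guaranteed by the hypothesis. Theorem~\ref{og}(ii) then delivers a nonzero integer $x'$ with $|x'|\leq Q$ and $\|x'\ux\|\leq CQ^{-1/m}$ for
\[
C \;:=\; (m+1)^{(m+1)/(2m^2)}\,\omega^{1/m^2},
\]
so $\ux\in Di_m(C)$. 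Plugging in the prescribed $\omega=(m+1)^{-m^2-m}(c^{\ast})^{m^2}$, the ratio simplifies to $C/c=(m+1)^{(1-m^2)/(2m^2)}=(m+1)^{-(m-1)(m+1)/(2m^2)}$, which is strictly less than $1$ for $m\geq 2$. Hence $C<c$, contradicting $\ux\notin Di_m(C)$ from the defining property of $\mathcal{A}$. Thus $\ux\notin Di_m^{\ast}(\omega)$ as required.

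The main obstacle, if it can be called one, is the exponent bookkeeping to verify $C<c$, which reduces to the elementary inequality $m\geq 2$; the prescribed $\omega$ sits comfortably below the sharper threshold $(m+1)^{-(m+1)^2/2}(c^{\ast})^{m^2}$ that the argument actually requires, so no tightness is lost. Once the set inclusion $\mathcal{A}\subseteq Di_m^{\ast}(c^{\ast})\setminus(Di_m^{\ast}(\omega)\cup Bad_m^{\ast})$ is established, the packing dimension bound $m-1$ from Corollary~\ref{c} and the Hausdorff dimension bounds from Theorem~\ref{hdd} pass to the linear-form set verbatim.
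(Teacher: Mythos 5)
Your proposal does not prove the statement in question. The statement is Theorem~\ref{og} itself, i.e.\ German's transference result linking simultaneous approximation of $\ux$ to approximation by the linear form $\scp{\uy,\ux}$. What you have written is a deduction of Theorem~\ref{Fr} \emph{from} Theorem~\ref{og} (together with Corollary~\ref{c} and Theorem~\ref{hdd}): you take Theorem~\ref{og} as a black box and apply it pointwise, so with respect to the assigned statement the argument is circular and contains no proof at all. A genuine proof of Theorem~\ref{og} requires a geometry-of-numbers argument --- in the paper it is not proved but quoted as a special case of~\cite[Theorem~7]{german}, with the explicit constant obtained by inserting the bound $\Delta_d^{-1}\leq d^{1/2}$ for $d=m+1$; an independent proof would go through Mahler-type transference (dual or pseudo-compound parallelepipeds) or Minkowski's second convex body theorem, none of which appears in your text. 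That is the missing content.

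As a side remark, the material you did write is essentially the paper's own proof of Theorem~\ref{Fr}: the same choice $c=(m+1)^{-(m+1)/(2m)}c^{\ast}$, the same two applications of parts (i) and (ii), and the same contradiction argument; your exponent bookkeeping in part (ii), giving the factor $(m+1)^{(m+1)/(2m^2)}$ rather than the paper's more generous $(m+1)^{1/2+1/(2m)}$, is in fact the sharper constant, and the final comparison $C/c=(m+1)^{(1-m^2)/(2m^2)}<1$ for $m\geq 2$ is correct. But since that theorem was not the one you were asked to prove, this does not close the gap identified above.
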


We now prove our claim.

\begin{proof}[Proof of Theorem~\ref{Fr}]
	Let $c\in(0,1]$ to be chosen later. By Corollary~\ref{c} and Theorem~\ref{hdd},
	the set $Di_m(c)\setminus (\cup_{\epsilon>0} Di_m(c-\epsilon) \cup Bad_m)=Di_m(c)\setminus (\cup_{\epsilon>0} Di_m(c-\epsilon) \cup Bad_m^{\ast})$ has the stated metrical properties. 
	Take any $\ux$ in this set. 
	Take arbitrary, large $Y$ and put $X=(Y(m+1)^{-1/(2m)})^m$, which
	is also large.
	Now the hypothesis of (i) from Theorem~\ref{og}
	holds when we take
	\[
	U=cX^{-1/m}.
	\]
	From the conclusion we get $\uy^{\ast}\in \mathbb{Z}^m$ that satisfies
	\[
	0< |\uy^{\ast}|_{\infty}\leq Y=(m+1)^{1/(2m)}X^{1/m}
	\]
	and
	\[
	 \Vert\scp{\uy^{\ast},\ux}\Vert\leq V= (m+1)^{1/(2m)}X^{1/m-1}U=
	c(m+1)^{1/(2m)}X^{-1}= c^{\ast}Y^{-m},
	\]
	where
	\begin{equation}  \label{eq:kloa}
	c^{\ast}= c(m+1)^{1/2+1/(2m)}.
	\end{equation}
	Observe this holds for all large $Y$, so $\ux\in Di_m^{\ast}(c^{\ast})$.
	
	Now assume that for some $\tilde{c}\in(0,1)$ we have
	$\ux\in Di_m^{\ast}(\tilde{c})$. That means for all large $X$ we may take
	\[
	U= \tilde{c} X^{-m}
	\]
	and the hypothesis of (ii) from Theorem~\ref{og} is satisfied
	for some $\uy\in\mathbb{Z}^m$.
	From the conclusion we get the existence of a positive integer $x^{\prime}$
	satisfying
	\[
	0< |x^{\prime}|\leq Y^{\prime}= (m+1)^{1/(2m)}XU^{1/m-1}=  
	(m+1)^{1/(2m)} \tilde{c}^{1/m-1} X^m
	\]
	and
		\[
	 \Vert x^{\prime}\ux\Vert \leq V^{\prime}= (m+1)^{1/(2m)}U^{1/m}=(m+1)^{1/(2m)} \tilde{c}^{1/m} X^{-1}= \tilde{C}Y^{\prime -1/m},
	\]
	where
	\begin{equation} \label{eq:ocker}
	\tilde{C}= (m+1)^{1/2+1/(2m)} \tilde{c}^{1/m^2}.
	\end{equation}
	 Now, since
	$Y^{\prime}$ can be any large number by choosing $X$ suitably,
	we infer $\ux\in Di_m(\tilde{C})$. 
	If $\tilde{C}< c$
	and assuming $c\leq 1$, this contradicts our choice of $\ux$. By \eqref{eq:kloa}, the 
	latter condition $c\leq 1$ clearly holds as soon as $c^{\ast}\leq 1$,
	so we require $\tilde{C}\geq c$, which by \eqref{eq:ocker} 
	and \eqref{eq:kloa} leads to
	\[
	\tilde{c}\geq 
	((m+1)^{-1/2-1/(2m)}c)^{m^2}= (m+1)^{-m^2/2-m/2} c^{m^2}= 
	(m+1)^{-m^2-m}c^{\ast m^2}.
	\]
	Combining our results we see that $\ux\in Di_m^{\ast}(c^{\ast})\setminus (Di_m^{\ast}((m+1)^{-m^2-m}c^{\ast m^2})\cup Bad_m^{\ast})$.
	\end{proof}


It may be possible to derive similar, possibly stronger, effective results when combining Corollary~\ref{c}
	with the essential method of Davenport and Schmidt~\cite{ds},
	however in reverse direction (we need the conclusion from simultaneous approximation to linear form instead of the other way round), instead of~\cite{german}. We also want to refer to~\cite[\S~4]{beretc}, in
	particular~\cite[Lemma~4.9]{beretc}, in this context.

\vspace{0.55cm}

{\em The author thanks Mumtaz Hussain for pointing out
Example 4.6 in Falconer's book as a tool to estimate
the Hausdorff dimension in Theorems~\ref{hdd},~\ref{beides}. }

\end{document}